\documentclass[hidelinks,onefignum,onetabnum]{siamart220329}

\usepackage{amsfonts}
\usepackage{graphicx}
\usepackage[utf8]{inputenc}
\usepackage{amsmath}
\usepackage[english]{babel}
\usepackage{amssymb}
\usepackage{oldgerm}
\usepackage{mathrsfs}
\usepackage{comment}
\usepackage{setspace}
\usepackage{tikz}
\usepackage{pgfplots}
\usepackage[scanall]{psfrag}
\usepackage{graphicx,pst-all,bm}
\usepackage{flushend}
\usepackage{subfig}
\usepackage{mdwlist}
\usepackage{multirow}
\usepackage{paralist}
\usepackage[paper=a4paper,left=25mm,right=25mm,top=25mm,bottom=25mm]{geometry}
\usepackage{hyperref}
\usepackage{color}
\usepackage{longtable}
\usepackage{cuted}
\usepackage{verbatim}
\usepackage{siunitx}
\usepackage{fancyhdr}
\usepackage{nomencl}
\usepackage{cite}
\usepackage{float}
\usepackage{amsopn}
\usepackage{xcolor}

% %===============================================================
% % Damit Manuels Schaubild funktioniert:
% \usepackage{tikz}%{pgfplots,tikz}
% \usetikzlibrary{shapes,arrows,decorations.markings,decorations.text}
% \usetikzlibrary{arrows.meta}
% \usetikzlibrary{calc}
% \tikzstyle{block2} = [rectangle, draw, 
% text width=25em, text centered, rounded corners, minimum height=6em]
% \tikzstyle{block3} = [rectangle, draw, 
% text width=30em, text centered, rounded corners, minimum height=4em]
% \tikzstyle{block3b} = [rectangle, draw, 
% text width=30em, text centered, rounded corners, minimum height=4em]
% \tikzstyle{block4} = [rectangle, draw, text width = 18em, text centered, rounded corners, minimum height = 4em]
% \tikzstyle{block5} = [rectangle, draw, text width = 18em, text centered, rounded corners, minimum height = 4em]
% \tikzstyle{block7} = [rectangle, draw, 
% text width=35em, text centered, rounded corners, minimum height=4em]

% \tikzstyle{blocktest} = [rectangle, draw, 
% text width=30em, text centered, rounded corners, minimum height=4.3em]

% \tikzstyle{arrow} = [draw, -latex']
% \tikzstyle{line} = [draw, -latex']
% %===============================================================

%===============================================================
% Damit Manuels Schaubild funktioniert:
\usepackage{tikz}%{pgfplots,tikz}
\usetikzlibrary{shapes,arrows,decorations.markings,decorations.text}
\usetikzlibrary{arrows.meta}
\usetikzlibrary{calc}
\tikzstyle{block2} = [rectangle, draw, 
text width=25em, text centered, rounded corners, minimum height=6em]
\tikzstyle{block3} = [rectangle, draw, 
text width=30em, text centered, rounded corners, minimum height=4em]
\tikzstyle{block3b} = [rectangle, draw, 
text width=30em, text centered, rounded corners, minimum height=4em]
\tikzstyle{block4} = [rectangle, draw, text width = 18em, text centered, rounded corners, minimum height = 4em]
\tikzstyle{block5} = [rectangle, draw, text width = 18em, text centered, rounded corners, minimum height = 4em]
\tikzstyle{block7} = [rectangle, draw, 
text width=35em, text centered, rounded corners, minimum height=4em]

\tikzstyle{blocktest} = [rectangle, draw, 
text width=30em, text centered, rounded corners, minimum height=4.3em]

\tikzstyle{arrow} = [draw, -latex']
\tikzstyle{line} = [draw, -latex']
%===============================================================

%\theorembodyfont{\rm}
%\theoremstyle{theorem}
\newtheorem{Prop}{Proposition}[section]
\newtheorem{Lem}[Prop]{Lemma}
\newtheorem{Thm}[Prop]{Theorem}
\newtheorem{Cor}[Prop]{Corollary}

\newtheorem{Def}[Prop]{Definition}
\newtheorem{Rem}[Prop]{Remark}

\newtheorem{Ex}[Prop]{Example}

% Add a serial/Oxford comma by default.

\newcommand{\Q}{S}
\newcommand{\PH}{\Sigma_{n,m}}

\newcommand{\rk}{\mathrm{rk}\,}
\newcommand{\im}{\mathrm{im}\,}

\newcommand{\norm}[1]{\left\Vert #1 \right\Vert}
\newcommand{\abs}[1]{\left\vert #1 \right\vert}
\newcommand{\set}[1]{\left\lbrace #1 \right\rbrace}

\renewcommand{\exp}[1]{e^{#1}}

\renewcommand{\Re}{\mathrm{Re}\,}
\renewcommand{\Im}{\mathrm{Im}\,}

% Used for creating new theorem and remark environments
\newsiamremark{remark}{Remark}
\newsiamremark{hypothesis}{Hypothesis}
\crefname{hypothesis}{Hypothesis}{Hypotheses}
\newsiamthm{claim}{Claim}
\newsiamremark{example}{Example}

% Sets running headers as well as PDF title and authors
\headers{Hidden regularity in optimal control of pH systems%Hidden regularity in singular optimal control of port-Hamiltonian systems
}{T. Faulwasser, J. Kirchhoff, V. Mehrmann, F. Philipp, M. Schaller, K. Worthmann}

% Title. If the supplement option is on, then "Supplementary Material"
% is automatically inserted before the title.
\title{Hidden regularity in singular optimal control of port-Hamiltonian systems%Port-Hamiltonian optimal control from the DAE point of view
\thanks{Preprint submitted to Arxiv on 11/20/23}
\funding{JK thanks Technische Universität Ilmenau and Freistaat Thüringen for their financial support as part of the Thüringer Graduiertenförderung. VM was supported by German Research Foundation through Priority Program SPP 1984 \textit{Hybrid and multimodal energy systems} within the project \textit{Distributed Dynamic Security Control}. FP was funded by the Carl Zeiss Foundation within the project \textit{DeepTurb--Deep Learning in and from Turbulence} and by the free state of Thuringia within the project \textit{THInKI--Th\"uringer Hochschulinitiative für KI im Studium}. KW gratefully acknowledges funding by the German Research Foundation (DFG; grant no.\ 507037103).}}
% Authors: full names plus addresses.
\author{
Timm Faulwasser\thanks{Institute of Energy Systems, Energy
Efficiency and Energy Economics, TU Dortmund University,  Germany (\email{timm.faulwasser@ieee.org})}
\and Jonas Kirchhoff\thanks{Optimization-based Control Group, Institute of Mathematics, Technische Universität Ilmenau, Germany
% (\email{jonas.kirchhoff@tu-ilmenau.de}, \email{friedrich.philipp@tu-ilmenau.de}, \email{manuel.schaller@tu-ilmenau.de}, \email{karl.worthmann@tu-ilmenau.de}).
(\{jonas.kirchhoff, friedrich.philipp, manuel.schaller, karl.worthmann\}@tu-ilmenau.de).
}
\and Volker Mehrmann\thanks{Institute of Mathematics, Technische Universität Berlin, Germany (\email{mehrmann@math.tu-berlin.de}), }
\and Friedrich Philipp\footnotemark[3]
\and Manuel Schaller\footnotemark[3]
\and Karl Worthmann\footnotemark[3]}

%------------------------------------------------------------------------------
%
%     FRITZ' DEFINITIONS
%
%------------------------------------------------------------------------------
% Often used

% Numbers
\newcommand{\R}{\ensuremath{\mathbb R}}    % Reelle Zahlen
\newcommand{\C}{\ensuremath{\mathbb C}}    % Komplexe Zahlen
   % Komplexe Zahlen
%\newcommand{\Q}{\ensuremath{\mathbb Q}}    % Rationale Zahlen
    % Nat"urliche Zahlen
    % Ganze Zahlen
\newcommand{\K}{\ensuremath{\mathbb K}}    % Irgendein Koerper
    % Einheitskreis

% Inner product stuff

               % Nur mit package stmaryrd.sty
               % Nur mit package stmaryrd.sty
 % Nur mit package stmaryrd.sty
        % Nur mit package stmaryrd.sty

% Letters
\newcommand{\calA}{\mathcal A}

\newcommand{\calE}{\mathcal E}

  % For scr-font package "mathrsfs" is needed

\newcommand{\la}{\lambda}

% Matrices and vectors
\newcommand{\bmat}[4]
{
   \begin{bmatrix}
      #1 & #2\\
      #3 & #4
   \end{bmatrix}
}
\newcommand{\bvec}[2]
{
   \begin{bmatrix}
      #1\\
      #2
   \end{bmatrix}
}
\newcommand{\sbvec}[2]{\left[\begin{smallmatrix}#1\\#2\end{smallmatrix}\right]}

% Not in italic mode
\renewcommand{\Im}{\operatorname{Im}}
\renewcommand{\Re}{\operatorname{Re}}

% Arrows

% Misc
\newcommand{\ol}{\overline}
\newcommand{\ul}{\underline}

% Only here
%------------------------------------------------------------------------------
%------------------------------------------------------------------------------

\DeclareMathOperator*{\argmin}{argmin}

\begin{document}
\maketitle
\begin{abstract}
We study the problem of state transition on a finite time interval with minimal energy supply for linear port-Hamiltonian systems. While the cost functional of minimal energy supply is intrinsic to the port-Hamiltonian structure, the necessary conditions of optimality resulting from Pontryagin's maximum principle may yield singular arcs. The underlying reason is the linear dependence on the control, which makes the problem of determining the optimal control as a function of the state and the adjoint more complicated or even impossible. To resolve this issue, we fully characterize regularity of the (differential-algebraic) optimality system by using the interplay of the cost functional and the dynamics. In case of the optimality DAE being characterized by a regular matrix pencil, we fully determine the control on the singular arc. In case of singular matrix pencils of the optimality system, we propose an approach to compute rank-minimal quadratic perturbations of the objective such that the optimal control problem becomes regular. We illustrate the applicability of our results by a general second-order mechanical system and a discretized boundary-controlled heat equation.
\end{abstract}

% REQUIRED
\begin{keywords}
Singular Optimal Control, port-Hamiltonian Systems, Regular Pencil, Hidden Regularity, Minimal Energy, Differential-Algebraic Equation, Index.
\end{keywords}

% REQUIRED
\begin{MSCcodes}
49J15, %Calculus of variations and optimal control; optimisation: Existence theories: Optimal control problems involving ordinary differential equations
%49K30: Optimal solutions belonging to restricted classes
%49N35: Optimal feedback synthesis
37J99, %Dynamical systems and ergodic theory: Finite-dimensional Hamiltonian, Lagrangian, contact, and nonholonomic systems: None of the above, but in this section
34A09 %ODE:General Theory: Implicit equations, differential-algebraic equations

%34B05 Linear boundary value problems for ordinary differential equations
\end{MSCcodes}

% \section*{Notation}
% In this paper, the symbol $\K$ stands for either $\R$ or $\C$. The conjugate transpose of a matrix $A\in\K^{n\times m}$ is denoted by $A^H$. For Hermitian matrices $A\in\K^{n\times n}$ (i.e. $A = A^H$) we write $A\ge 0$ ($A>0$) if, and only if, $x^HAx\ge 0$ ($x^HAx> 0$) holds for all $x\neq 0$. The spectrum, i.e. the set of all eigenvalues, of a matrix $A\in\K^{n\times n}$ is denoted by $\sigma(A)$.

% For $T>0$, the space of all Lebesgue measurable and absolutely integrable vector-valued functions $x : [0,T]\to\K^n$ is denoted by $L^1([0,T],\K^n)$. The space of all continuously differentiable functions (absolutely continuous functions with $L^1$-derivative) among these is denoted by $\calC^1([0,T],\K^n)$ ($W^{1,1}([0,T],\K^n)$, respectively). The abbreviation ``a.e.'' stands for ``Lebesgue almost every''.

% Let $V$ be a linear space over $\K$. Then by $V[t]$ we denote the set of all polynomials with coefficients in $V$. The set of all formal power series with coefficients in $V$ will be denoted by $V[[t]]$.

%\FP{Fritz: Affiliations, thanks etc}

\section{Introduction}
%Since their introduction by Maschke and van der Schaft in~\cite{Maschke93}, port-Hamiltonian systems have been successfully applied in modeling and control of various system classes, e.g. from the electro-mechanical domain.
Since their introduction  in~\cite{Maschke93} and especially in recent years, port-Hamiltonian systems have been successfully applied in the modeling of a wide variety of physical processes: mechanics \cite{brugnoli2019por1t,brugnoli2019port2,siuka2011port,macchelli2007port},  electrical engineering \cite{reis2021analysis,schoberl2008modelling}, thermodynamics and fluid dynamics \cite{Altmann2017,rashad2021port,rashad2021port2,hoang2011port}, economics \cite{macchelli2014towards}, see also \cite{SchaJelt2014,duindam2009modeling,Jacob2012} and the recent survey \cite{MehU23}.

Recently, also optimal control of port-Hamiltonian systems has been given some attention, see, e.g., \cite{kolsch2021optimal,wu2020reduced}. Since port-Hamiltonian modeling is energy-based, the energy supplied to the system is readily available as a mathematical expression in terms of the ports and it is natural from an application point of view to employ it as the objective minimized in optimal control problems (OCPs), cf.~\cite{FaulMascPhilSchaWort21,FaulMascPhilSchaWort22}. For an application to energy-optimal building control we refer to \cite{SchaZell23} and entropy-optimal control is considered in~\cite{lhmnc,PhilScha23}. 
%Due to the energy function and power balance given in the port-Hamiltonian framework, a suitable cost functional readily presents itself as the {\em supplied Hamiltonian energy}. 
However, the resulting OCP is a singular and standard solution techniques, e.g.\ construction of Riccati state feedback, are not directly applicable, see e.g. \cite{Meh91,KunM11a}. One could instead employ general techniques for singular optimal control like those studied in
\cite{Aro11,BelJ75}. But these approaches would not exploit %make any use of 
the particular structure of the port-Hamiltonian system and the chosen cost function. Instead we apply and extend techniques developed in~\cite{Camp76} to derive, under weaker conditions than regularity of the optimal control problem, a condition that guarantees the existence of an optimal solution with a closed form for the associated optimal control. This weaker condition is related to the regularity of the matrix pencil, which is associated with the necessary optimality condition arising from a Pontryagin maximum principle. %This extends previous weaker conditions of~\cite{Camp76,Meh91} for general linear quadratic optimal control problems. 
The derivation of this general existence condition using the port-Hamiltonian structure of the OCP is at the center of our work. In the case where this existence condition fails to hold, we  provide a regularization procedure via minimal-rank perturbations of the cost functional such that the pencil associated with the optimality condition becomes regular.% The derived results do largely not depend on the underlying port-Hamiltonian structure and apply to general linear-quadratic optimal control problems.}

For a fixed time horizon $T>0$, the problem of transferring an initial state $x_S\in\K^n$ of a port-Hamiltonian system under control constraints to a desired state $x_T\in\K^n$ with minimal energy supply can be formulated as
\begin{align}\label{eq:special_port-Hamiltonian_OCP0T}
\begin{split}
\min_{u\in L^1([0,T],\mathbb{K}^m)}\int_{0}^{T} \Re &\left(y(t)^H u(t)\right)\,\mathrm{d}t\\
\mathrm{s.t.}\quad  \tfrac{\mathrm{d}}{\mathrm{d}t}x(t) &= (J-R)Qx(t)+Bu(t) \quad \mathrm{for a.e.}\ t\in [0,T],\\ x(0) &= x_S,\ x(T) = x_T,\\
y(t) &= B^HQ x(t), \quad u(t) \in \mathbb{U} \quad \mathrm{for a.e.}\ t\in [0,T],
\end{split}
\end{align}
where $\mathbb{U} \subset \mathbb{K}^m$, $m\le n$, is a non-empty, convex and compact set, and
\begin{align*}
(J,R,Q,B)\in\PH := \set{(J,R,Q,B)\in\left(\mathbb{K}^{n\times n}\right)^3\times\mathbb{K}^{n\times m}\,\Big\vert\, J = -J^H,\, R = R^H\geq 0,\, Q = Q^H\geq 0}.
\end{align*}
The OCP \eqref{eq:special_port-Hamiltonian_OCP0T} was studied in~\cite{FaulMascPhilSchaWort21,FaulMascPhilSchaWort22}, see also \cite{lhmnc} for a particular infinite-dimensional case. While in these papers it has been shown that, under a reachability condition of the terminal state, an optimal solution of \eqref{eq:special_port-Hamiltonian_OCP0T} typically exists, one can, a priori, not expect its uniqueness nor that the associated optimal control can be expressed as a state or output feedback in terms of the optimal trajectory and the associated Lagrange multiplier (also called costate or adjoint) in a unique way. To illustrate this issue, we consider the particular case of box constraints, that is, 
\begin{align*}
\mathbb{U} = \set{z\in\K^m\,\big\vert\, \Re z_j\in[\ul v_j,\ol v_j],\;\Im z_j\in[\ul w_j,\ol w_j]\text{ for all $j\in\{1,\ldots,m\}$}},
\end{align*}
where $\ul v_j,\ul w_j,\ol v_j,\ol w_j\in\R$ with $\underline{v}_j\leq \overline{v}_j$ and $\underline{w}_j\leq\overline{w}_j$ for all $j\in\set{1,\ldots,m}$. By the Pontryagin maximum principle, see e.g.~\cite{Libe12,PonBGM62,Lee67}, for an optimal state-control pair $(x,u)\in W^{1,1}([0,T],\mathbb{K}^n)\times L^1([0,T];\mathbb{K}^m)$ there exist $\lambda \in W^{1,1}([0,T],\mathbb{K}^n)$ and a constant $\lambda_0\in\mathbb{K}$ such that $(\lambda(t),\lambda_0)\neq 0$ for all $t\in[0,T]$ and
\begin{subequations}\label{eq:intro_optcond}
\begin{align}
\tfrac{\mathrm{d}}{\mathrm{d}t}{\lambda}(t) & = -\big((J-R)Q\big)^H\lambda(t) - \lambda_0 QBu(t),\\
\tfrac{\mathrm{d}}{\mathrm{d}t}{x}(t) & = (J-R)Q x(t) + Bu(t),\\
u(t) & \in \argmin_{\bar{u}\in \mathbb{U}} \,\Re\left(\bar{u}^H B^H\left(\lambda_0 Qx(t) + \lambda(t)\right)\right) \label{eq:intro_optcond_control}
\end{align}
\end{subequations}
for a.e. $t\in[0,T]$. %, where the real part in \eqref{eq:intro_optcond_control} is applied componentwise. 
Using the \emph{switching function} $s(t) = B^H(\lambda_0 Qx(t) + \lambda(t))$, the pointwise minimization in \eqref{eq:intro_optcond_control} yields the following componentwise characterization for almost all $t\in[0,T]$:
\begin{align}\label{eq:charac}
\begin{split}
&\Re s_j(t) > 0 \quad \mbox{implies} \quad \Re u_j(t) = \underline{v}_j \qquad \text{and} \qquad
\Im s_j(t) > 0 \quad \mbox{implies} \quad \Im u_j(t) = \ol{w}_j,\\
&\Re s_j(t) < 0 \quad\mbox{implies}\quad \Re u_j(t) = \overline{v}_j \qquad \text{and} \qquad 
\Im s_j(t) < 0 \quad\mbox{implies}\quad \Im u_j(t) = \ul{w}_j.
\end{split}
\end{align}
However, if {$\Re s_j(\cdot)$ vanishes on a set with non-zero Lebesgue measure, %$\Re s_i(t)=0$ for some $i$,
then \eqref{eq:intro_optcond_control} does not provide any information on $\Re u_j(\cdot)$ on that set. The same holds analogously for the imaginary part. If the set is an interval, the interval is called a {\em singular arc}, cf.\ \cite{Aro11,zelikin2012theory}.

In \cite[Theorem 8]{FaulMascPhilSchaWort21} it was shown that the simple condition $\im B\cap\ker RQ = \{0\}$ allows for the unique characterization of the optimal control on singular arcs in terms of the optimal state $x$ and the adjoint~$\lambda$. In Proposition \ref{prop:indexthree} below, we prove that the condition $\im B\cap\ker RQ = \{0\}$ can be characterized in terms of the Jordan structure associated with the eigenvalue $\infty$ (the \emph{Kronecker index}) of the differential-algebraic optimality system along singular arcs.

In this paper, we are mainly interested in characterizations 
%As the aim of this article is to provide characterizations 
%for such a determination 
of the optimal control on singular arcs, i.e.\ intervals on which some components of the switching function vanish. %Such singular arcs will be called {\em fully singular arcs} in the following.
%of the optimal control on the singular arc, 
To illustrate our approach, let us assume the control constraints are inactive on an interval $[t_0,t_1]$, $0\leq t_0< t_1 \leq T$. Consequently, all components of the switching function vanish on $[t_0,t_1]$ due to the contraposition of \eqref{eq:charac}. Then, the pointwise inclusion \eqref{eq:intro_optcond_control} may be replaced with the algebraic condition $s(t) = 0$ for all $t\in [t_0,t_1]$ such that~\eqref{eq:intro_optcond} is a differential-algebraic equation (DAE).
Conversely, this DAE is the optimality system of the unconstrained counterpart of \eqref{eq:special_port-Hamiltonian_OCP0T}, i.e.,
%as they only enter in the inclusion~\eqref{eq:intro_optcond_control} which is trivial as $s(t)=0$. 
%This then leads to the OCP
\begin{align}\label{eq:special_port-Hamiltonian_OCP}
\begin{split}
\min_{u\in L^1([t_0,t_1],\mathbb{K}^m)}\int_{t_0}^{t_1} \Re &\left(y(t)^H u(t)\right)\,\mathrm{d}t\\
\mathrm{s.t.}\quad
\tfrac{\mathrm{d}}{\mathrm{d}t}x(t) &= (J-R)Qx(t)+Bu(t)\quad \mbox{for a.e.}\ t\in [t_0,t_1],\\ x(t_0) &= x^0,\ x(t_1) = x^1,\\
y(t) &= B^HQ x(t)\quad \mbox{for a.e.}\ t\in [t_0,t_1],
\end{split}
\end{align}
where $x^0\in\K^n$ and $x^1\in\K^n$ are the states at which the optimal solution of \eqref{eq:special_port-Hamiltonian_OCP0T} enters, respectively, \ leaves the singular arc.

Typically, %one will mostly encounter the case where 
only some (and not all) of the control constraints are inactive. Then, denoting the active and inactive set corresponding to the control constraints by $\mathcal{A}\subset \{1,\ldots,m\}$ and %the inactive set by 
$\mathcal{I} = \{1,\ldots,m\}\setminus \mathcal{A}$, respectively, %In this case 
after suitable permutation one can split the control into $u = \begin{bmatrix}u_{\mathcal{I}}^\top,\,u_{\mathcal{A}}^\top\end{bmatrix}^\top$, which yields the dynamics %variable such that
\begin{align*}
    \frac{\mathrm{d}}{\mathrm{dt}} x(t) = (J-R)Qx(t) + B_\mathcal{I}u_\mathcal{I}(t) + B_\mathcal{A}u_{\mathcal{A}}(t).
\end{align*}
Here, $u_{\mathcal{A}}(t)$ is determined by the pointwise %maximization
minimization~\eqref{eq:intro_optcond} and can, thus, be treated as in inhomogeneity
%Thus, in case of active constraints, we have to add an inhomogeneity 
$f(t) = B_\mathcal{A}u_{\mathcal{A}}(t)$ in the state equation of~\eqref{eq:special_port-Hamiltonian_OCP}. This splitting has to be adapted whenever the \emph{active set}~$\mathcal{A}$ changes. 
However, for simplicity of exposition, we will analyse to the homogeneous case of \eqref{eq:special_port-Hamiltonian_OCP}}. All our considerations are generalizable to characterize the inactive controls in case of active constraints as the inhomogenities only enter the differential equations in \eqref{eq:intro_optcond}.

%where $t_0<t_1$ are the times at which the optimal solution of \eqref{eq:intro_optcond} enters resp.\ leaves the singular arc at the state $x^0 \in \mathbb{K}^n$ resp. $x^1 \in \mathbb{K}^n$. % are the point where the optimal state of \eqref{eq:special_port-Hamiltonian_OCP0T} enters the singular arc.

%asthe control constraint set does not enter \eqref{eq:intro_optcond}, . 

The approach in \cite{Camp76} studies the optimality conditions associated with~\eqref{eq:special_port-Hamiltonian_OCP} obtained by the Pontryagin maximum principle. When this optimality system is regular in the DAE sense, then the optimal control is linearly determined by the optimal state trajectory and associated adjoint. Moreover, criteria on the existence and uniqueness of an optimal control can be deduced, see also \cite{Meh91} for a detailed discussion and numerical methods.

For cases where this regularity is not given, one can resort to regularizations of the OCP %, see \cite{Tro10}, 
via ``small'' perturbations of the original cost functional by a ``small'' quadratic term in the control and obtains the regularized optimal control problem 
\begin{equation}\label{eq:perturbed_port-Hamiltonian_OCP}
\begin{aligned}
\min_{u\in L^1([t_0,t_1],\mathbb{K}^m)}\int_{t_0}^{t_1} \big[\Re&(y(t)^H u(t)) + u(t)^H\Q u(t)\big]\,\mathrm{d}t\\
\mathrm{s.t.}\quad  \tfrac{\mathrm{d}}{\mathrm{d}t}x(t) &= (J-R)Qx(t)+Bu(t)\quad \mbox{for a.e.\ }\ t\in [t_0,t_1],\\x(t_0) &= x^0,\ x(t_1) = x^1,\\
 y(t) &= B^HQ x(t)\quad \mbox{for a.e.\ }\ t\in [t_0,t_1]
\end{aligned}
\end{equation}
for some $\Q\in\mathbb{K}^{m\times m}$ with $\Q^H = \Q\geq 0$. In order to stay ``close'' to the original problem \eqref{eq:special_port-Hamiltonian_OCP}, we aim to choose $\Q$ ``minimally'' (in the sense that the rank of $\Q$ is minimal) so that the associated optimality DAE is regular. 
%If we can choose $\Q = 0$, then we obtain %the original problem~\eqref{eq:special_port-
%Hamiltonian_OCP}. 
An alternative type of smallness of the regularization would be to set $S = \alpha I$ with small $\alpha > 0$, which corresponds to a Tikhonov regularization % small norm regularization terms 
as is typical in optimal control of partial differential equations \cite{Tro10}.

This work is structured as follows. First, we recall the necessary and sufficient optimality conditions obtained from the Pontryagin maximum principle. These conditions are given as a linear DAE in the state, the adjoint, and the control. Solution formulas for this DAE in terms of the Drazin inverse can be applied if the associated pencil is regular. Therefore, we study the properties of this pencil in Section~\ref{sec:Optimality_pencil}. Our results are summarized %presented 
in Figure~\ref{fig:Schaubild_2}. 
In particular, in Subsection~\ref{subsec:reg}, we provide characterisations of regularity which extend Campbell's results~\cite{Camp76} and relate them to the sufficient regularity condition %for regularity 
$\ker R\cap\im B = \set{0}$ proposed in \cite[Theorem 3.8]{FaulMascPhilSchaWort21}. Then, in Subsection~\ref{subsec:minreg}, we use the characterisations of regularity to find a rank-minimal regularization term $\Q$. In Subsection~\ref{subsec:index}, it is shown that the condition $\ker R\cap\im B = \set{0}$ characterises the case that the index of the optimality pencil is three. 
In Section~\ref{sec:existence}, we study existence and uniqueness of an optimal solution under the assumption of a regular optimality pencil. In particular, a state-feedback law is deduced. Lastly, we illustrate the results with some examples.

%\textcolor{blue}{TODO: Add somewhere that dropping control constraints will make the proof of existence not valid anymore, which we will also see in the mechanical example: the infimizing sequence leaves the function space. So in particular, the analysis of the optimality system gives necessary conditions for existence of integrable solutions (here, a condition on the horizon, initial and terminal state).} \textcolor{red}{Answer: I would rather prefer to emphasize that we \textit{drop} the control constraints only on the singular arc. Then, these problems do not occur (that would be another paper).}

\newpage
%\noindent
{\bf Nomenclature}

\smallskip\noindent
\hspace*{-3mm}
\begin{tabular}[ht]{rl}%{p{90pt}p{330pt}}%265
$\K$ & Either $\mathbb{R}$ or $\mathbb{C}$\\
$\K^{n\times m}$
  &   $n\times m$ matrices with entries in $\K$\\%[2mm]
$A^H$ & Conjugate transpose of a matrix $A\in K^{n\times m}$\\
$\sigma(A)$ & Eigenvalues of $A\in\K^{n\times n}$\\
$A\geq 0$ & Means $x^H Ax\geq 0$ for all $x\in\K^n$\\
$A>0$ & Means $x^H Ax>0$ for all $x\in\K^n\setminus\set{0}$\\
$\Sigma_{n,m}$ & $\left\{(J,R,Q,B)\in\left(\mathbb{K}^{n\times n}\right)^3\times\mathbb{K}^{n\times m}\,\Big\vert\, J = -J^H,\, R = R^H\geq 0,\, Q = Q^H\geq 0\right\}$\\
$L^1([0,T],\K^n)$ & Lebesgue measureable and integrable functions $f:[0,T]\to\K^n$\\
$\left.f\right|_{W}$ & Restriction of the function $f:V\!\to\!\R^n$ to $W\subseteq V$\\
$\mathcal{C}^k(I,\K^n)$ & $k$-times continuously differentiable functions $f:I\to\K^n$\\ 
$\mathcal{C}^k([0,T],\K^n)$ & $:=\set{f\vert_{[0,T]}\,\big\vert\,\exists\,\varepsilon>0: f\in\mathcal{C}^1(-\varepsilon,T+\varepsilon),\K^n}$\\
$\mathcal{C}^k([0,T],\K^n)$ & $:=\set{f\vert_{[0,T]}\,\big\vert\,\exists\,\varepsilon>0: f\in\mathcal{C}^1(-\varepsilon,T+\varepsilon),\K^n}$\\
$W^{1,1}([0,T],\K^n)$ & Absolutely continuous functions $f:[0,T]\to\K^n$ with derivative $f'\in L^1([0,T],\K^n)$\\
$V[t]$ & Polynomials with entries in the vector space $V$\\
$V[[t]]$ & Formal power series with entries in the vector space $V$
%$ \Gl_n(\R)$ & the group of invertible matrices in $\R^{n\times n}$\\%[2mm]
\end{tabular}

%%%%%%%%%%%%%%%%%%%%%%%%%%%%%%%%%%%%%%%%%%%%%%%%%%%%%%%%%%%%%%%%%%%%%%%%%%%%%%%%%%%%%%%%%%%%%%%%%%%%%%%%%%%%%
\section{Necessary optimality condition}
%%%%%%%%%%%%%%%%%%%%%%%%%%%%%%%%%%%%%%%%%%%%%%%%%%%%%%%%%%%%%%%%%%%%%%%%%%%%%%%%%%%%%%%%%%%%%%%%%%%%%%%%%%%%%

Before we derive necessary optimality conditions, we transform the objective into a quadratic cost functional without mixed terms. Since the dynamics of \eqref{eq:perturbed_port-Hamiltonian_OCP} have port-Hamiltonian structure with quadratic Hamiltonian energy function $x\mapsto \frac{1}{2} x^HQx$, we have the well-known \emph{energy balance equation}, see e.g. \cite{SchaJelt2014},
\begin{equation}\label{eq:energy_balance}
\begin{aligned}
\frac{1}{2}x(t_1)^HQ x(t_1) - \frac{1}{2}x(t_0)^HQ x(t_0) & = \int_{t_0}^{t_1}\big[\Re y(t)^Hu(t) - x(t)^HQRQx(t)\big]\,\mathrm{d}t
\end{aligned}
\end{equation}
for all tuples $(x,y,u)\in W^{1,1}([t_0,t_1],\mathbb{K}^n)\times W^{1,1}([t_0,t_1],\mathbb{K}^n)\times L^1([t_0,t_1],\mathbb{K}^m)$ satisfying the system
\begin{equation}\label{eq:pH_ISO-system}
\begin{aligned}
\tfrac{\mathrm{d}}{\mathrm{d}t}x(t) & = (J-R)Qx(t)+Bu(t),\\
y(t) & = B^HQ x(t)
\end{aligned}
\end{equation}
for a.e.\ $t\in[t_0,t_1]$. Thus, again for tuples $(x,y,u)$ satisfying \eqref{eq:pH_ISO-system}, the cost functional of \eqref{eq:perturbed_port-Hamiltonian_OCP} can be rewritten as
\begin{align*}
\int_{t_0}^{t_1} \big[\Re y(t)^H u(t) + u(t)^H\Q u(t)\big]\,\mathrm{d}t = &\ \frac{1}{2}x(t_1)^HQ x(t_1) - \frac{1}{2}x(t_0)^HQx(t_0)\\&\quad + \int_{t_0}^{t_1}\big[x(t)^HQRQx(t) + u(t)^H\Q u(t)\big]\,\mathrm{d}t.
\end{align*}
The OCP \eqref{eq:perturbed_port-Hamiltonian_OCP} includes the initial condition $x(t_0)=x^0$ and the terminal constraint $x(t_1)=x^1$. % that the solutions of~\eqref{eq:pH_ISO-system} must fulfill to be considered in the minimization. 
Thus, $\frac{1}{2}x(t_1)^HQ x(t_1) - \frac{1}{2}x(t_0)^HQ x(t_0)$ is a constant on the set of feasible trajectories of~\eqref{eq:perturbed_port-Hamiltonian_OCP} and can be neglected in the minimization. Therefore, we may equivalently consider the linear-quadratic optimal control problem
\begin{align}\label{eq:ultimate_port-Hamiltonian_OCP}
\begin{split}
\min_{u\in L^1([t_0,t_1],\mathbb{K}^m)}\frac{1}{2}\int_{t_0}^{t_1} \big[&x(t)^HQRQx(t)  + u(t)^H\Q u(t)\big]\,\mathrm{d}t\\
\mathrm{s.t.}\quad  \tfrac{\mathrm{d}}{\mathrm{d}t}x(t) &= (J-R)Qx(t)+Bu(t)\quad \mathrm{for a.e.\ } t\in [t_0,t_1],\\
x(t_0) &= x^0,\ x(t_1) = x^1,
\end{split}
%\tag{pH-OCP}
\end{align}
which has the same optimal solutions %controls 
as~\eqref{eq:perturbed_port-Hamiltonian_OCP}. % is equivalent to~\eqref{eq:perturbed_port-Hamiltonian_OCP}. 
The stage cost %(or Lagrange function) 
of~\eqref{eq:ultimate_port-Hamiltonian_OCP} is the quadratic function
\begin{align*}
    g: \mathbb{K}^n\times\mathbb{K}^{m}\to\mathbb{K},\qquad (x,u)\mapsto \frac{1}{2} \begin{bmatrix}
x\\u
\end{bmatrix}^H\begin{bmatrix}
QRQ &0\\
0 & \Q
\end{bmatrix}\begin{bmatrix}
x\\u
\end{bmatrix}.
\end{align*}
Then, the associated optimal control Hamiltonian function %(in the sense of optimal control, not in the sense of port-Hamiltonian systems) 
reads
\begin{align*}
H:\mathbb{K}^n\times\mathbb{K}^m\times\mathbb{K}^n\times \mathbb{K}\to\mathbb{K},\quad (x,u,\lambda,\lambda_0) \mapsto \lambda^H(J-R)Qx + \lambda^HBu + \lambda_0 g(x,u).
\end{align*}
By the Pontryagin maximum principle (see e.g.~\cite[p.102]{Libe12}), one has the following necessary optimality condition: If $(x,u)\in W^{1,1}([t_0,t_1],\mathbb{K}^n)\times L^1([t_0,t_1],\mathbb{K}^m)$ is an optimal trajectory of~\eqref{eq:ultimate_port-Hamiltonian_OCP}, then there exists a function $\lambda\in W^{1,1}([t_0,t_1],\mathbb{K}^n)$ and a constant $\lambda_0\in\mathbb{K}$ so that $(\lambda(t),\lambda_0)\neq (0,0)$ for all $t\in [t_0,t_1]$ and the DAE
\begin{subequations}\label{eq:the_optimality_condition}
\begin{align}
-\tfrac{\mathrm{d}}{\mathrm{d}t}{\lambda} & = \big((J-R)Q\big)^H\lambda + \lambda_0 QRQx,\label{eq:zeile_1}\\
\tfrac{\mathrm{d}}{\mathrm{d}t}{x} & = (J-R)Q x + Bu,\label{eq:zeile_2}\\
0 & = -B^H\lambda - \lambda_0 \Q u\label{eq:zeile_3}
\end{align}
\end{subequations}
with boundary conditions $x(t_0) = x^0$ and $x(t_1) = x^1$ is fulfilled for almost every $t\in[t_0,t_1]$. 
We briefly state a result on normalization of the stage-cost adjoint %scalar multplier 
$\lambda_0$ under a controllability assumption.
%(see~\cite[Remark 3.5.23]{Son13}), 
\begin{Lem}
    If $((J-R)Q,B)$ is controllable, i.e. $\rk\begin{bmatrix} B,(J-R)QB,\ldots,((J-R)Q)^{n-1}B\end{bmatrix} = n$,
%$\rk \begin{bmatrix} \lambda I-A & B\end{bmatrix}=n$ for all $\lambda \in \mathbb C$, 
 then the scalar multiplier in \eqref{eq:the_optimality_condition} satisfies $\la_0\neq 0$.
\end{Lem}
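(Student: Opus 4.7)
The plan is to argue by contradiction. Suppose that $\lambda_0 = 0$. Then the optimality DAE \eqref{eq:the_optimality_condition} reduces to
\begin{align*}
-\tfrac{\mathrm{d}}{\mathrm{d}t}\lambda(t) &= \big((J-R)Q\big)^H\lambda(t),\\
0 &= B^H\lambda(t)
\end{align*}
for a.e.\ $t\in[t_0,t_1]$. The first equation is a linear ODE with constant coefficients, so $\lambda\in W^{1,1}([t_0,t_1],\mathbb{K}^n)$ is in fact smooth and of the form $\lambda(t) = \exp{-((J-R)Q)^H(t-t_0)}\lambda(t_0)$. In particular, $B^H\lambda(\cdot)$ is continuous (even analytic), so the algebraic constraint holds pointwise on $[t_0,t_1]$.

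Next, I would exploit the analyticity by repeatedly differentiating the identity $B^H\lambda(t) = 0$ and substituting the adjoint equation. A simple induction gives
\begin{equation*}
B^H\big(((J-R)Q)^H\big)^k\lambda(t) = 0 \qquad\text{for all } k\in\mathbb{N}_0 \text{ and all } t\in[t_0,t_1].
\end{equation*}
Taking conjugate transposes, this is equivalent to $\lambda(t)^H((J-R)Q)^k B = 0$ for all such $k,t$, i.e., $\lambda(t)$ is orthogonal (in the Hermitian sense) to every column of the controllability matrix $\begin{bmatrix} B,(J-R)QB,\ldots,((J-R)Q)^{n-1}B\end{bmatrix}$ (by Cayley–Hamilton, the higher powers add nothing new).

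By the controllability hypothesis this matrix has full row rank, so its columns span $\mathbb{K}^n$, forcing $\lambda(t) = 0$ for every $t\in[t_0,t_1]$. Together with the contradiction assumption $\lambda_0 = 0$, this yields $(\lambda(t),\lambda_0)=(0,0)$, which violates the non-triviality clause of the Pontryagin maximum principle. Hence $\lambda_0\neq 0$. The only subtlety is the promotion of the a.e.\ identity $B^H\lambda(t)=0$ to a pointwise, infinitely differentiable identity, which is handled by the smoothness of solutions to the homogeneous linear adjoint ODE; the algebra afterwards is just the standard Kalman/Hautus-type argument, so no real obstacle is expected.
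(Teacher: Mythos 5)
Your proposal is correct and follows essentially the same argument as the paper: assume $\lambda_0=0$, use \eqref{eq:zeile_3} and the reduced adjoint equation to derive $B^H\big(((J-R)Q)^H\big)^k\lambda\equiv 0$ inductively, and invoke controllability to force $\lambda\equiv 0$, contradicting the non-triviality condition of the maximum principle. The only cosmetic difference is that you justify the pointwise identity via the explicit smooth solution of the adjoint ODE, while the paper simply appeals to the continuity of $\lambda$; both are fine.
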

\begin{proof}
We abbreviate $A = (J-R)Q$ and assume that $\la_0=0$. Then,~\eqref{eq:zeile_3} yields $B^H\la=0$ and~\eqref{eq:zeile_1} reads as $\tfrac{\mathrm{d}}{\mathrm{d}t}\la = -A^H\la$, hence $B^HA^H\la(t) = -B^H\tfrac{\mathrm{d}}{\mathrm{d}t}\la(t) = 0$ for almost every $t\in[t_0,t_1]$. From the continuity of $\lambda$, we conclude that $B^HA^H\lambda\equiv 0$ on $[t_0,t_1]$. Inductively, one obtains $B^H(A^k)^H\la\equiv 0$ on $[t_0,t_1]$ for $k=0,1,\ldots,n-1$. By the controllability of $(A,B)$, this implies that $(\la(t),\la_0) = (0,0)$ for all $t\in [t_0,t_1]$. But this contradicts the non-triviality condition $(\lambda(t),\lambda_0)\neq (0,0)$ for all $t\in [t_0,t_1]$ of the Pontryagin maximum principle.
\end{proof}
 %Since a straightforward extension of~\cite[Theorem II.1]{JoKiPapII} in the spirit of~\cite[Proposition 3.11]{IlchKircScha23} yields that controllability of systems $((J-R)Q,B)$ with $(J,R,Q,B)\in\Sigma_{n,m}$ is a generic property, 
In view of the previous %theorem
lemma, in all theoretical discussions below we assume that $\la_0\neq 0$. In this case, dividing~\eqref{eq:zeile_1} and~\eqref{eq:zeile_3} by $\lambda_0$ and rescaling~$\lambda$, %the first and third equation in \eqref{eq:the_optimality_condition}, 
we can without loss of generality normalize $\lambda_0 = 1$.

An important result, establishing a one-to-one relationship of the optimality system \eqref{eq:the_optimality_condition} with $\lambda_0\neq 0$ and the optimal control problem~\eqref{eq:ultimate_port-Hamiltonian_OCP} is the following theorem ensuring also sufficiency of the necessary optimality conditions.

\begin{Thm}[\hspace*{-.4em}{\cite[Theorem 1]{Camp76}}]
Let $(\lambda,x,u)\in \left(W^{1,1}([t_0,t_1],\mathbb{K}^n)\right)^2\times L^2([t_0,t_1],\mathbb{K}^m)$ be a solution of~\eqref{eq:the_optimality_condition} with $\lambda_0=1$ satisfying $x(t_0) = x^0$ and $x(t_1) = x^1$. Then $(x,u)$ is a solution of the optimal control problem~\eqref{eq:ultimate_port-Hamiltonian_OCP}.
\end{Thm}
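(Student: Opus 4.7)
The plan is to exploit convexity of the cost functional. The stage cost $g(x,u)=\tfrac12 x^HQRQx+\tfrac12 u^H\Q u$ is a positive semidefinite Hermitian quadratic form in $(x,u)$ because $QRQ\ge 0$ and $\Q\ge 0$, so the integrated cost $J$ of~\eqref{eq:ultimate_port-Hamiltonian_OCP} is convex on the affine subspace of feasible trajectories. On such a convex objective any first-order stationary feasible point is automatically a global minimum, and the Pontryagin system~\eqref{eq:the_optimality_condition} with $\la_0=1$ encodes exactly such a first-order condition. I therefore expect the proof to reduce to a short integration-by-parts argument that turns the convexity estimate into the inequality $J(\tilde x,\tilde u)\ge J(x,u)$ for every feasible competitor $(\tilde x,\tilde u)$.

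Concretely, the first step is to fix any such $(\tilde x,\tilde u)$ satisfying the state equation together with $\tilde x(t_0)=x^0$ and $\tilde x(t_1)=x^1$. For any Hermitian $A\ge 0$ the elementary identity $\tilde z^HA\tilde z-z^HAz=2\Re\bigl(z^HA(\tilde z-z)\bigr)+(\tilde z-z)^HA(\tilde z-z)$ holds, so discarding the nonnegative tail produces the convexity bound
\begin{equation*}
g(\tilde x,\tilde u)-g(x,u)\ \ge\ \Re\bigl[\,x^HQRQ(\tilde x-x) + u^H\Q(\tilde u-u)\,\bigr].
\end{equation*}

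The second step is to use the optimality system to rewrite the two linear terms. Taking conjugate transposes in~\eqref{eq:zeile_1} with $\la_0=1$ gives $x^HQRQ=-\dot\la^H-\la^H(J-R)Q$, and~\eqref{eq:zeile_3} yields $u^H\Q=-\la^HB$. By~\eqref{eq:zeile_2} the increment $\tilde x-x$ obeys the driven equation $\tfrac{\mathrm{d}}{\mathrm{d}t}(\tilde x-x)=(J-R)Q(\tilde x-x)+B(\tilde u-u)$. Substituting these relations, the two contributions of the form $\la^HB(\tilde u-u)$ cancel, and the integrand collapses to the total derivative $-\tfrac{\mathrm{d}}{\mathrm{d}t}\Re\bigl[\la^H(\tilde x-x)\bigr]$. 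Integrating over $[t_0,t_1]$ and using $\tilde x(t_i)=x(t_i)$ for $i\in\{0,1\}$, this boundary term vanishes, whence $J(\tilde x,\tilde u)-J(x,u)\ge 0$, which is the claim.

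The only delicate point, and hence the main obstacle, is bookkeeping around the real part in the complex case $\K=\C$: the Hermitian convexity bound must be applied in its correct complex form and $\Re$ must be carried consistently through the substitutions so that the collapsed integrand really is a total derivative of a real quantity. No Gronwall estimate, compactness argument, or regularity improvement is needed, because the dynamics are linear and the cost is convex quadratic. The port-Hamiltonian structure enters only through the specific form $x^HQRQx$ of the state cost obtained via the energy balance~\eqref{eq:energy_balance}; the rest is the standard sufficiency of Pontryagin's conditions for convex linear-quadratic problems.
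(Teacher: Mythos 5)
Your argument is correct. Note that the paper itself does not prove this statement at all: it is imported verbatim as \cite[Theorem 1]{Camp76}, so there is no in-text proof to compare against; what you have written is a self-contained substitute for Campbell's result, and it is the classical Mangasarian-type sufficiency argument for convex linear--quadratic problems. The chain is sound: the convexity bound $g(\tilde x,\tilde u)-g(x,u)\ge\Re\bigl[x^HQRQ(\tilde x-x)+u^H\Q(\tilde u-u)\bigr]$ is the correct complex form since $QRQ\ge 0$ and $\Q\ge 0$; substituting $x^HQRQ=-\dot\lambda^H-\lambda^H(J-R)Q$ from \eqref{eq:zeile_1} and $u^H\Q=-\lambda^HB$ from \eqref{eq:zeile_3}, and using $\tfrac{\mathrm{d}}{\mathrm{d}t}(\tilde x-x)=(J-R)Q(\tilde x-x)+B(\tilde u-u)$, the integrand indeed collapses to $-\tfrac{\mathrm{d}}{\mathrm{d}t}\Re\bigl[\lambda^H(\tilde x-x)\bigr]$, whose integral vanishes by the shared boundary values. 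Two small points deserve one sentence each in a polished write-up. First, $\lambda$ and $\tilde x-x$ are in $W^{1,1}$, so their inner product is absolutely continuous and the fundamental theorem of calculus applies to the a.e.\ derivative identity; this is what licenses the integration of the total derivative. Second, a competitor $\tilde u$ is only in $L^1$, so $\tilde u^H\Q\tilde u$ need not be integrable; since $\Q\ge0$ this term is nonnegative and the cost of such a competitor is $+\infty$ (and the cross term $u^H\Q\tilde u$ is controlled by Cauchy--Schwarz for the semidefinite form whenever both energies are finite), so the comparison inequality holds in all cases. With these caveats made explicit, your proof is a complete and arguably more transparent alternative to citing \cite{Camp76}, and it makes visible exactly where the hypotheses $QRQ\ge0$, $\Q\ge0$, $\lambda_0=1$ and the fixed endpoint conditions enter.
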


Taking additionally the structural properties of $(J,R,Q,B)\in\PH$ into account, the \emph{optimality system} \eqref{eq:the_optimality_condition} is equivalent to the following \emph{self-adjoint DAE}, see \cite{KunM23,KunMS14}:
\begin{align}\label{eq:the_optimality_DAE}
\frac{\mathrm{d}}{\mathrm{d}t} \underbrace{\begin{bmatrix}
0_{n\times n} & I_n & 0_{n\times m}\\
-I_n & 0_{n\times n} & 0_{n\times m}\\
0_{m\times n} & 0_{m\times n} & 0_{m\times m}
\end{bmatrix}}_{=:\mathcal{E}}\begin{bmatrix}
\lambda\\
x\\
u
\end{bmatrix} = \underbrace{\begin{bmatrix}
0_{n\times n} & (J-R)Q & B\\
((J-R)Q)^H & QRQ & 0_{n\times m}\\
B^H & 0_{m\times n} & \Q
\end{bmatrix}}_{=:\mathcal{A}_\Q}\begin{bmatrix}
\lambda\\x\\u
\end{bmatrix}.
\end{align}

%Then, existence (and uniqueness) of a solution of the optimal control problem~\eqref{eq:ultimate_port-Hamiltonian_OCP} is equivalent to existence (and uniqueness) of a solution $(\lambda,x,u)$ of~\eqref{eq:the_optimality_DAE} that fulfills the boundary conditions $x(t_0) = x^0$ and $x(t_1) = x^1$. 
Since we are in the constant coefficient case, we can analyse the properties of \eqref{eq:the_optimality_condition} via the properties of the \emph{optimality pencil} $s\mathcal E - \mathcal A_S$, which actually has a special symmetry structure that can be effectively used in numerical methods for solving the boundary value problem,  see \cite{ByeMX07}. Before we continue with the analysis of the pencil, we first consider an example of a second-order mechanical system.

\begin{Ex}\label{ex:mechsys}
A large class of systems from elastomechanics (possibly after a suitable finite-element discretization) can be formulated as a second-order control system
\begin{align}\label{mechsys}
M\tfrac{\mathrm{d}^2}{\mathrm{d}t^2}{q} + D\tfrac{\mathrm{d}}{\mathrm{d}t}{q} + Kq = u
\end{align}
with symmetric matrices $M,D,K\in\mathbb{R}^{\ell \times \ell}$, $\ell\in \mathbb{N}$, where the mass matrix $M$ and the stiffness matrix $K$ are positive definite, respectively, and the damping matrix $D$ is positive semi-definite.In the following we consider the fully damped case such that $D$ is positive definite. Performing a first -order formulation by composing the state $x=\sbvec{x_1}{x_2}\in\mathbb{R}^{2\ell}$ from momentum $x_1 = M\dot q$ and position $x_2 = q$, the system \eqref{mechsys} can equivalently be written in the form
\begin{align*}
\tfrac{\mathrm{d}}{\mathrm{d}t}{x} = 
\Bigg(
\underbrace{
\begin{bmatrix}
0_{\ell\times \ell} & -I_\ell\\
I_\ell & 0_{\ell\times \ell}
\end{bmatrix}}_{J}
-
\underbrace{
\begin{bmatrix}
D & 0_{\ell\times \ell}\\
0_{\ell\times\ell} & 0_{\ell\times\ell}
\end{bmatrix}}_{R}
\Bigg)
\underbrace{
\begin{bmatrix}
M^{-1} & 0_{\ell\times\ell}\\
0_{\ell\times \ell} & K
\end{bmatrix}}_{Q}x
+
\underbrace{
\begin{bmatrix}
I_\ell\\ 0_{\ell\times \ell}
\end{bmatrix}}_{B}u,
\end{align*}
see for example~\cite{WarsBohmSawoTari21} for an application in the control of high-rise buildings. We observe that $\ker RQ\cap\im B = \{0\}$ holds; that is, the sufficient condition for regularity derived in~\cite{FaulMascPhilSchaWort21}  is satisfied. Thus, we can inspect the corresponding optimal control problem \eqref{eq:perturbed_port-Hamiltonian_OCP} without regularization term, i.e. $S=0$. Further, using the \emph{Hautus Lemma}, see e.g.~\cite{Son13}, it is immediate that the pair $((J-R)Q,B)$ is controllable. Hence, we may assume that $\la_0=1$ in \eqref{eq:the_optimality_condition}.

Decomposing 
%$x=(x_1,x_2)$ with $x_1,x_2\in \mathbb{R}^\ell$ and 
also $\la = \sbvec{\la_1}{\la_2}\in\mathbb{R}^{2\ell}$ 
analogously to the state,~\eqref{eq:zeile_3} is equivalent to
\begin{align}\label{eq:exmechlam1}
    \lambda_1(t)  = 0 \quad \mbox{for all} \ t\in [t_0,t_1].
\end{align}
The adjoint equation~\eqref{eq:zeile_2} reads
\begin{align*}
    \frac{\mathrm{d}}{\mathrm{d}t}\begin{bmatrix}
        \lambda_1\\
        \lambda_2
    \end{bmatrix} &=
    \begin{bmatrix}
M^{-1} D & -M^{-1}\\
K & 0
\end{bmatrix}
\begin{bmatrix}
    \lambda_1\\
    \lambda_2
\end{bmatrix}
- \begin{bmatrix}
    M^{-1}DM^{-1} & 0 \\
    0&0
\end{bmatrix}
\begin{bmatrix}
    x_1\\
    x_2
\end{bmatrix}
\end{align*}
and inserting \eqref{eq:exmechlam1} yields
\begin{align*}
    \tfrac{\mathrm{d}}{\mathrm{d}t} {\lambda}_2(t) = 0\quad \mbox{for a.e.} \ t\in [t_0,t_1] \qquad \mathrm{and}\qquad \lambda_2(t) = -DM^{-1}x_1(t) \quad \mbox{for all} \ t\in [t_0,t_1].
\end{align*}
%Suppose that $\lambda_0=0$ holds. Then $\lambda_2 \equiv 0$, hence $(\lambda_0,\lambda(t)) \equiv 0$ follows, contradicting the non-triviality condition $(\lambda_0,\lambda(t))\neq 0$ for all $t\in [t_0,t_1]$ in Pontryagin's maximum principle. Thus, w.l.o.g.\ we set $\lambda_0=1$ and conclude 
From this, we further conclude
$$
\tfrac{\mathrm{d}}{\mathrm{d}t}{x}_1(t)= 0\quad \mbox{for a.e.} \ t\in [t_0,t_1]\quad\text{with} \quad x_1(t)\equiv x_1^0 \qquad\text{and}\qquad x_1(t) = x_1^1 \quad \mbox{for all} \ t\in[t_0,t_1].
$$
This already yields the condition $x_1^0 = x_1^1$ on the entry and exit point of the singular arc and $\lambda_2(t) = -DM^{-1} x_1^0$ for all $t\in[t_0,t_1]$. Inserting this into the second row of the state equation yields $\tfrac{\mathrm{d}}{\mathrm{d}t}{x_2}(t) = M^{-1}x_1^0$ for a.e.\ $t\in[t_0,t_1]$ and thus
\[
x_2(t) = x_2^0 + (t-t_0)M^{-1}x_1^0\ \quad \mbox{for all}\ t\in [t_0,t_1].
\]
From this we get the additional condition $x_2^1 = x_2^0 + (t_1-t_0)M^{-1}x_1^0$, coupling the entry and exit point with the time at which the solution leaves the singular arc. In particular, if $x^0\neq x^1$, an optimal solution only exists for one particular length $t_1-t_0$ of the singular arc.

Finally, the first row of the state equation implies the formula for the optimal control, which is the \emph{feedback law}
\begin{align*}
    u(t) = DM^{-1}x_1(t) + Kx_2(t) = (D+(t-t_0)K)M^{-1}x_1^0 + Kx_2^0 \qquad \mbox{for all} \ t\in [t_0,t_1],
\end{align*}
where  the last part of the equation is an explicit solution formula in terms of the initial value.
\end{Ex}

In Example~\ref{ex:mechsys} we observed that, for second-order mechanical systems, the control on the singular arc can be expressed directly via the entry point $x^0$. The aim of this paper is to provide a generalisation of this approach to optimal control problems of the form~\eqref{eq:special_port-Hamiltonian_OCP} or, if a regularization is necessary, its regularized counterparts~\eqref{eq:perturbed_port-Hamiltonian_OCP}, or  \eqref{eq:ultimate_port-Hamiltonian_OCP}.
%\textcolor{blue}{Here we could include an intermezzo by means the computations for second order mechanical system (see JK.pdf in the files). This example then illustrates that existence is not given for free (coupling conditions), and motivates the subsequent analysis for the general case. What do you think? }
To derive an explicit solution formula, we recall the concept of the Drazin inverse.

\begin{Def}[{Drazin inverse, see~\cite[Definition 2.17]{KunkMehr06}}]\label{def:drazin}
Let $M\in\mathbb{K}^{n\times n}$. The unique matrix $M^D\in\mathbb{K}^{n\times n}$ with the properties
\begin{enumerate}
\item[(i)] $M^DM = MM^D$,
\item[(ii)] $M^DMM^D = M^D$,
\item[(iii)] there exists $\nu\in\mathbb{N}_0: M^DM^{\nu+1} = M^\nu$,
\end{enumerate}
is called the \emph{Drazin inverse} of $M$. %\textcolor{blue}{$\nu$ is called ...}
The minimal $\nu$ satisfying (iii) is called the \emph{(Riesz) index} of $M$ and coincides with the nilpotency index of $M$, which is the size of the largest Jordan block associated with the eigenvalue zero.
\end{Def}

Recall further the well-known solution formula for initial value problems associated with linear time-invariant differential-algebraic systems with commuting coefficients using the Drazin inverse, see e.g.~\cite[Lemma 2.25 and Theorem 2.27]{KunkMehr06} for a proof.
\begin{Prop}\label{prop:representation_solution}
Let $I\subseteq\mathbb{R}$ be an interval with $t_0\in I$ and consider the  homogeneous system
\begin{align}\label{eq:general_DAE}
\left(\tfrac{\mathrm{d}}{\mathrm{d}t}E-A\right)z = 0,
\end{align}
%let $sE-A\in\mathbb{K}[s]^{n\times n}$ be a matrix pencil 
with $E,A\in\mathbb{K}^{n\times n}$ satisfying $EA = AE$ and $\ker E \cap \ker A = \set{0}$. A function $z\in\mathcal{C}^1(I,\mathbb{R}^n)$ is a solution of \eqref{eq:general_DAE}
if, and only if, $z(t_0)\in\mathrm{im}\,E^DE$ and
\begin{align}\label{eq:solution_Drazin_inverse}
z = \exp{E^DA(\cdot-t_0)}%E^DE
z(t_0).
\end{align}
\end{Prop}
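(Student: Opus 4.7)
\medskip
\noindent\textbf{Proof proposal.} The plan is to use the spectral projector associated with the Drazin inverse to decompose the state space into a ``regular'' subspace on which the DAE reduces to an explicit linear ODE, and a ``singular'' subspace on which the only $\mathcal{C}^1$ solution is identically zero. Set $P:=E^D E$. From properties (i) and (ii) of Definition~\ref{def:drazin} one verifies directly that $P^2=P$ and $P=EE^D$. Because $EA=AE$, the Drazin inverse $E^D$ (which lies in the commutant of $E$) also commutes with $A$, so $P$ commutes with both $E$ and $A$. Hence $\im P$ and $\ker P$ are $E$- and $A$-invariant. From property (iii) one shows in addition $\ker P=\ker E^\nu$, and $E$ restricted to $\ker P$ is nilpotent of nilpotency index~$\nu$.

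Writing every candidate solution as $z=z_1+z_2$ with $z_1:=Pz$ and $z_2:=(I-P)z$, I would first apply $E^D$ to~\eqref{eq:general_DAE}: since $E^DE=P$ is constant and $E^D(I-P)=E^D-E^DEE^D=0$, this yields $z_1' = E^DAz_1$, a standard linear ODE whose unique solution is $z_1(t)=\exp(E^DA(t-t_0))z_1(t_0)$. Applying $I-P$ and using commutativity yields $E z_2' = A z_2$ on $\ker P$, where $E$ is nilpotent. The next, and crucial, step is to show that $A$ is invertible on $\ker P=\ker E^\nu$. Here one uses the hypothesis $\ker E\cap\ker A=\{0\}$: if $v\in\ker E^\nu$ with $Av=0$, then by commutativity $A(E^k v)=E^k(Av)=0$ for every $k$, so $E^{\nu-1}v\in\ker E\cap\ker A=\{0\}$; a downward induction on $k$ gives $v=0$. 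This is the step I expect to be the main obstacle, as it is the only place the index condition and the kernel-intersection hypothesis interact.

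With $A$ invertible and $E$ nilpotent on $\ker P$, the equation $Ez_2'=Az_2$ is equivalent to $z_2 = N z_2'$ where $N:=(A|_{\ker P})^{-1}E|_{\ker P}$ is nilpotent of index at most $\nu$. Since $z_2\in\mathcal{C}^1$ and the iterated identity $z_2=N^kz_2^{(k)}$ holds for every $k$ (by repeatedly differentiating and substituting, which is permissible because $N^kz_2^{(k)}\in\im N^k$ is as smooth as needed to continue the argument on $\im N$), taking $k=\nu$ yields $z_2\equiv 0$. Therefore $z=z_1$ and, in particular, $z(t_0)=Pz(t_0)\in\im(E^DE)$, and the formula~\eqref{eq:solution_Drazin_inverse} holds.

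For the converse, given $z(t_0)\in\im(E^DE)=\im P$, define $z$ by~\eqref{eq:solution_Drazin_inverse}. Since $E^DA$ commutes with $P$, $z(t)\in\im P$ for all $t\in I$, so $Pz=z$ and $z'=E^DAz$. Multiplying by $E$ and using $EE^D=P$ together with $Pz=z$ gives $Ez'=EE^DAz=PAz=APz=Az$, establishing that $z$ solves~\eqref{eq:general_DAE}. This closes the equivalence and completes the proof.
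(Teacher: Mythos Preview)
The paper does not prove this proposition; it simply cites \cite[Lemma~2.25 and Theorem~2.27]{KunkMehr06}. Your approach---decomposing via the spectral projector $P=E^DE$ into the invariant subspaces $\im P$ and $\ker P$, reducing to an explicit ODE on $\im P$ and to a nilpotent equation on $\ker P$---is exactly the standard argument found there, and all ingredients (that $E^D$ is a polynomial in $E$ and hence commutes with $A$, that $\ker P=\ker E^\nu$, and that $\ker E\cap\ker A=\{0\}$ forces $A$ to be invertible on $\ker P$) are handled correctly.

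There is, however, one genuine gap. On the nilpotent part you assert $z_2=N^k z_2^{(k)}$ ``by repeatedly differentiating and substituting'', but this requires $z_2\in\mathcal C^\nu$, whereas the hypothesis only gives $z_2\in\mathcal C^1$. Your parenthetical justification does not close this: since $N$ is nilpotent (hence non-injective) on $\ker P$, the fact that $Nz_2'=z_2\in\mathcal C^1$ does not imply $z_2'\in\mathcal C^1$, so $z_2''$ need not exist. The repair is immediate and stays within $\mathcal C^1$: apply $N^{\nu-1}$ to $z_2=Nz_2'$ to get $N^{\nu-1}z_2=N^\nu z_2'=0$, so $z_2(t)\in\ker N^{\nu-1}$ for all $t$ and hence also $z_2'(t)\in\ker N^{\nu-1}$; then $N^{\nu-2}z_2=N^{\nu-1}z_2'=0$, and a downward induction on the exponent gives $z_2\equiv 0$ without ever differentiating beyond first order. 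With this modification your proof is complete and matches the cited reference.
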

%\begin{proof}
%From~\cite[Lemma 2.25]{KunkMehr06}, it can be concluded that if $z$ has the form~\eqref{eq:solution_Drazin_inverse}, then $z$ is a solution of~\eqref{eq:general_DAE}. Conversely, let $z$ be a solution if~\eqref{eq:general_DAE}. Then,~\cite[Theorem 2.27]{KunkMehr06} yields that there is some $v\in\mathbb{K}^n$ so that $z = \exp{E^DA(\cdot)}E^DEv$ and hence $z(t_0) = \exp{E^DA t_0}E^DEv$. Since $E$ and $A$ commute, $E^D$ commutes with $A$ (see e.g.~\cite[Lemma 2.21]{KunkMehr06}) and thus the property~(ii) of the Drazin inverse yields
%\begin{align*}
%E^DEv = E^DEE^DEv = E^DE\exp{-E^DA t_0}z(t_0) = \exp{-E^DA t_0}E^DEz(t_0).
%\end{align*}
%This shows that $z$ has indeed the form~\eqref{eq:solution_Drazin_inverse}.
%\end{proof}

Unfortunately, the matrices $\mathcal{E}$ and $\mathcal{A}_\Q$ in the optimality system \eqref{eq:the_optimality_DAE} commute if, and only if, $B = 0$ in which case the optimal control problem~\eqref{eq:ultimate_port-Hamiltonian_OCP} becomes trivial, since the boundary value problem either possesses no solution (which is generically the case) or every control is optimal. However, if the pencil $s\mathcal{E}-\mathcal{A}_\Q$ is regular (i.e. $\det (s\mathcal{E}-\mathcal{A}_\Q)\in\mathbb{K}[s]$ is not the zero-polynomial), then there exists $\mu\in\mathbb{K}$ so that $\mu\mathcal{E}-\mathcal{A}_\Q$ is invertible and the coefficient matrices of the transformed pencil $(\mu\mathcal{E}-\mathcal{A}_\Q)^{-1}(s\mathcal{E}-\mathcal{A}_\Q)$ commute as the following elementary result shows.

\begin{Lem}[{\cite[Lemma 2, p.\,418]{CampMeyeRose76}}]\label{lem:commute}
Let $E,A\in\mathbb{K}^{n\times n}$. Then for every $\mu\in\mathbb{K}$ such that $\mu E-A$ is invertible,
\begin{align*}
[(\mu E-A)^{-1}E][(\mu E-A)^{-1}A] = [(\mu E-A)^{-1}A][(\mu E-A)^{-1}E].
\end{align*}
\end{Lem}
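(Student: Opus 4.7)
The plan is to abbreviate $P := (\mu E - A)^{-1}$ and start from the tautology $A = \mu E - (\mu E - A)$. Left-multiplying by $P$ yields the key algebraic identity
\begin{align*}
PA = \mu\, PE - I,
\end{align*}
which expresses $PA$ as an affine function of $PE$ with coefficients that are scalars (resp.\ the identity).

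Once this relation is in hand, commutativity of $PE$ and $PA$ is immediate, because any matrix polynomial in $PE$ commutes with $PE$ itself. Concretely,
\begin{align*}
(PE)(PA) \;=\; (PE)(\mu\, PE - I) \;=\; \mu\,(PE)^2 - PE \;=\; (\mu\, PE - I)(PE) \;=\; (PA)(PE),
\end{align*}
which is exactly the claimed equality $[(\mu E-A)^{-1}E][(\mu E-A)^{-1}A] = [(\mu E-A)^{-1}A][(\mu E-A)^{-1}E]$.

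There is essentially no obstacle here: the only hypothesis used is the assumed invertibility of $\mu E - A$, and no structural conditions on $E$ or $A$ (symmetry, definiteness, a priori commutation, etc.) enter the argument. The identity also works uniformly over any field. The reason the lemma is worth recording in the present context is functional rather than technical: it converts the nominally non-commuting pencil coefficients of $s\mathcal{E} - \mathcal{A}_\Q$ into commuting ones under a single left-multiplication by $(\mu\mathcal{E} - \mathcal{A}_\Q)^{-1}$, which is precisely the structural hypothesis required to invoke the Drazin-inverse solution formula from Proposition~\ref{prop:representation_solution} for the optimality DAE~\eqref{eq:the_optimality_DAE} whenever its pencil is regular.
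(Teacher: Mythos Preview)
Your proof is correct. The paper does not supply its own proof of this lemma; it merely cites \cite[Lemma~2, p.\,418]{CampMeyeRose76}, so there is no in-paper argument to compare against. Your derivation via $PA = \mu\,PE - I$ is precisely the standard one-line proof (and is essentially the argument in the cited reference).
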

%\begin{proof}
%Given a ring $(R,+,\cdot)$ with $1_R$, any %two elements $a,b\in R$ with $a+b = 1_R$ %commute since
%\begin{align*}
%a^2+ab = a(a+b) = a\cdot 1_R = 1_R\cdot a %= (a+b)a = a^2+ba.
%\end{align*}
%For $\mu\neq 0$, this yields the assertion %with $a = \mu(\mu E-A)^{-1}E$ and $b = %(\mu E-A)^{-1}A$. If $\mu = 0$, however, %the assertion reduces to $A^{-1}EA^{-1}A = %A^{-1}AA^{-1}E$, which holds evidently %true.
%\end{proof}

It is then clear that the interplay of Proposition~\ref{prop:representation_solution} and Lemma \ref{lem:commute} leads to a solution formula for~\eqref{eq:the_optimality_DAE} if the pencil $s\mathcal E-\mathcal A_S$ is regular, see \cite{KunkMehr06} for a detailed analysis. However, it should be noted that this formula characterises only continuously differentiable solutions, while we initially considered weak solutions in, e.g., \eqref{eq:perturbed_port-Hamiltonian_OCP}. On the other hand, we ultimately seek for an optimal control which is a linear feedback of the state or the state and associated adjoint. In this case, the solution of the optimality system~\eqref{eq:the_optimality_DAE} is smooth and can be obtained from the solution formula. %Therefore, in the next section we investigate necessary and sufficient conditions for regularity of this pencil.
%In~\cite{Camp76}, Campbell uses this property together with Proposition~\ref{prop:representation_solution} to derive a solution formula for the optimality DAE~\eqref{eq:the_optimality_DAE}, which can be used to derive criteria for existence and uniqueness of the solution of the optimal control problem~\eqref{eq:ultimate_port-Hamiltonian_OCP}. \textcolor{purple}{It is our aim} to mimick his method and derive first criteria for \textcolor{purple}{the} regularity of our pencil $s\mathcal{E}-\mathcal{A}_\Q$. In this regard, we will exploit the port-Hamiltonian structure to simplify several steps.
%\boldsymbol{\tau_2}

%%%%%%%%%%%%%%%%%%%%%%%%%%%%%%%%%%%%%%%%%%%%%%%%%%%%%%%%%%%%%%%%%%%%%%%%%%%%%%%%%%%%%%%%%%%%%%%%%%%%%%%%%%%%%
\section{Analysis of the optimality pencil \texorpdfstring{$\boldsymbol{s\calE-\calA_S}$}{sE-AS}}\label{sec:Optimality_pencil}
%%%%%%%%%%%%%%%%%%%%%%%%%%%%%%%%%%%%%%%%%%%%%%%%%%%%%%%%%%%%%%%%%%%%%%%%%%%%%%%%%%%%%%%%%%%%%%%%%%%%%%%%%%%%%

In this section, we study properties of the pencil associated to the optimality DAE~\eqref{eq:the_optimality_DAE}. In particular, we consider its regularity, which is strongly connected with the solution formula in terms of the Drazin inverse, the index and rank-minimal choice of the regularization term $\Q$. The main results of this section are summarised in the Figure~\ref{fig:Schaubild_2}.

%First, we recall the well-known result that the set of all points $\mu\in\mathbb{K}$ so that $\mu\mathcal{E}-\mathcal{A}_\Q$ is either empty or co-finite.

%\begin{lemma}\label{lem:where_is_regular}
%A matrix pencil $sE-A\in\mathbb{K}[s]^{n\times n}$ is regular if, and only if, there is some $\mu\in\mathbb{K}^{n\times n}$ so that $\mu E -A \in\mathrm{GL}(\mathbb{R}^n)$. In the latter case, $\mu E-A$ is invertible for all but finitely many $\mu\in\mathbb{K}$.
%\end{lemma}
%\begin{proof}
%By definition, $sE-A$ is regular if, and only if, $\det(sE-A)$ is not the zero-polynomial. The set of all points $\mu$ so that $\det(sE-A)(\mu) = \det(\mu E- A) = 0$ is an algebraic variety and therefore (in view of the fundamental theorem of algebra) either finite (in which case $sE-A$ is regular) or coincides with $\mathbb{K}$ (in which case $sE-A$ is singular).
%\end{proof}
%Recall that a matrix pencil $sE-A\in\mathbb{K}[s]^{n\times n}$ is regular if, and only if, $\mu E-A\in\mathrm{GL}(\mathbb{K}^n)$ for all but finitely many $\mu\in\mathbb{K}$. 

%\subsection{\MS{Fixme?}}
%%%%%%%%%%%%%%%%%%%%%%%%%%%%%%%%%%%%%%%%%%%%%%%%%%%%%%%%%%%%%%%%%%%%%%%%%%%%%%%%%%%%%%%%%%%%%%%%%%%%%%%%%%%%%
\subsection{Regularity}\label{subsec:reg}
%%%%%%%%%%%%%%%%%%%%%%%%%%%%%%%%%%%%%%%%%%%%%%%%%%%%%%%%%%%%%%%%%%%%%%%%%%%%%%%%%%%%%%%%%%%%%%%%%%%%%%%%%%%%%
Conditions that characterize the regularity of the pencil  $s\mathcal E-\mathcal A_S$ via normal and staircase forms are well studied, see \cite{ByeMX07,Camp76,Meh91}.
%In \cite{Camp76}, Campbell already %,studied general linear-quadratic singular %optimal control problems and found %characterizing conditions on the %regularity of the optimality system. 
The next proposition contains such conditions in the  special case of port-Hamiltonian systems. %Our condition in item (iv) is a %simplification of (iii) due to %the port-Hamiltonian structure %of the OCP \eqref{eq:ultimate_port-Hamiltonian_OCP}.
%Applying results from \cite{Camp76}, one quickly concludes the following regularity criteria for the pencil $s\mathcal{E}-\mathcal{A}_\Q$.
%we can immediately conclude the following criteria for regularity of the pencil $s\mathcal{E}-\mathcal{A}_\Q$.
%1234
\begin{Prop}\label{prop:Campbell_for_us}
Let $(J,R,Q,B)\in\PH$ and $\Q\in\mathbb{K}^{m\times m}$ with $\Q^H = \Q\geq 0$. Then the following statements are equivalent:
\begin{enumerate}
\item[(i)] The pencil $s\mathcal{E}-\mathcal{A}_\Q$ is regular.
\item[(ii)] There exists $\mu\in\mathbb{K}\setminus(\sigma(-((J-R)Q)^H)\cup\sigma((J-R)Q))$ such that the matrix
\begin{align}\label{eq:Campbell's_qmu}
%\Q_\mu := \Q - \begin{bmatrix}B^H & 0\end{bmatrix} \begin{bmatrix}
%0 & \mu I-(J-R)Q \\
% -\mu I -((J-R)Q)^H & QRQ
%\end{bmatrix}^{-1}\begin{bmatrix}
%B\\ 0
%\end{bmatrix}\in\mathbb{K}^{m\times m}
\Q_\mu := \Q - \begin{bmatrix}B^H & 0\end{bmatrix}
\begin{bmatrix}
0 & (J-R)Q - \mu I\\
((J-R)Q)^H +\mu I & QRQ
\end{bmatrix}^{-1}\begin{bmatrix}
B\\ 0
\end{bmatrix}\in\mathbb{K}^{m\times m}
\end{align}
is invertible.
\item[(iii)] There exists $\omega\in\mathbb{R}$ such that
\begin{align}\label{eq:Campbell's_port-Hamiltonian_condition}
i\omega\notin\sigma((J-R)Q)\quad\text{and}\quad\ker\Q\cap \ker RQ\big((J-R)Q-i\omega I\big)^{-1}B = \set{0}.
\end{align}
\item[(iv)] There exists $\omega\in\mathbb{R}$ such that
\begin{align}\label{eq:Campbell's_port-Hamiltonian_condition_ohne_R}
i\omega\notin\sigma(JQ)\quad\text{and}\quad\ker\Q\cap \ker RQ\big(JQ-i\omega I\big)^{-1}B = \set{0}.
\end{align}
\item[(v)] There exists $\omega\in\mathbb{R}$ such that
\begin{align}
    i\omega\notin\sigma(JQ)\quad\text{and}\quad\ker\Q\cap B^{-1}(JQ-i\omega I)\ker RQ = \set{0},
\end{align}
\end{enumerate}
\end{Prop}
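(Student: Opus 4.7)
The plan is to prove the chain (i) $\Leftrightarrow$ (ii) $\Leftrightarrow$ (iii) $\Leftrightarrow$ (iv) $\Leftrightarrow$ (v). The link (i) $\Leftrightarrow$ (ii) is a Schur-complement argument on the block-$2{\times}2$ decomposition of $\mu\calE-\calA_\Q$ with upper-left $2n{\times}2n$ block
\[
M_\mu := \begin{bmatrix} 0 & \mu I - (J-R)Q \\ -\mu I - ((J-R)Q)^H & -QRQ \end{bmatrix}
\]
and lower-right block $-\Q$. For $\mu$ outside the finite set $\sigma((J-R)Q)\cup\sigma(-((J-R)Q)^H)$, a block-row swap gives $\det M_\mu = (-1)^n\det(\mu I+((J-R)Q)^H)\det((J-R)Q-\mu I)\neq 0$, so the standard Schur complement (with a sign flip) turns invertibility of $\mu\calE-\calA_\Q$ into invertibility of $\Q_\mu$. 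Since regularity of the pencil is equivalent to $\det(\mu\calE-\calA_\Q)\ne 0$ for all but finitely many $\mu$, one can always choose $\mu$ in the required complement, yielding (i) $\Leftrightarrow$ (ii).

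For (ii) $\Leftrightarrow$ (iii) I would specialize to $\mu = i\omega$ with $\omega\in\R$. Setting $N := (J-R)Q - i\omega I$, the two off-diagonal blocks of $M_{i\omega}$ become Hermitian adjoints $N$ and $N^H$, and inverting the resulting antitriangular block matrix explicitly gives
\[
\Q_{i\omega} \;=\; \Q + B^H N^{-H}\,QRQ\,N^{-1} B \;\geq\; 0,
\]
which is Hermitian positive semidefinite. Hence $\Q_{i\omega}$ is invertible iff $\ker \Q_{i\omega} = \{0\}$, and using $\ker QRQ = \ker RQ$ (valid because $R,Q\ge 0$) one identifies $\ker \Q_{i\omega} = \ker\Q\cap\ker RQ((J-R)Q - i\omega I)^{-1}B$. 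The quantifier "$\exists\mu\in\K$" in (ii) matches "$\exists\omega\in\R$" in (iii) by noting that if the pencil is regular, the determinant polynomial has only finitely many zeros on $i\R$ and $\sigma((J-R)Q)\cap i\R$ is finite, so a suitable $\omega$ exists.

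For (iii) $\Leftrightarrow$ (iv) the crucial observation is that $(J-R)Qw = JQw$ whenever $w\in\ker RQ$. Thus if $w := ((J-R)Q - i\omega I)^{-1}Bv$ lies in $\ker RQ$, then $(JQ - i\omega I)w = Bv$; conversely, if $w := (JQ - i\omega I)^{-1}Bv\in\ker RQ$, then $((J-R)Q - i\omega I)w = Bv$. Whenever $i\omega\notin\sigma((J-R)Q)\cup\sigma(JQ)$, the two kernel conditions therefore coincide, and the density argument again lets one pick a common $\omega$ excluding both finite spectra once the pencil is regular. Finally, (iv) $\Leftrightarrow$ (v) is an immediate rewriting: $RQ(JQ - i\omega I)^{-1}Bv = 0$ iff $(JQ - i\omega I)^{-1}Bv\in\ker RQ$ iff $Bv\in(JQ - i\omega I)\ker RQ$ iff $v\in B^{-1}(JQ - i\omega I)\ker RQ$, and intersecting with $\ker\Q$ closes the chain.

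The main obstacle I anticipate is the careful bookkeeping of the quantifier "$\exists\omega$" through the four equivalences, tracking at each stage which finite set of $\omega$ must be excluded (the spectra $\sigma((J-R)Q)$, $\sigma(JQ)$, and the roots of $\det(s\calE-\calA_\Q)$). The structural feature that makes the clean kernel characterizations in (iii)--(v) available -- rather than only the algebraic invertibility condition on $\Q_\mu$ -- is the Hermitian symmetry produced by the choice $\mu = i\omega$, which turns $\Q_{i\omega}$ into a sum of two PSD matrices whose kernel can be read off componentwise.
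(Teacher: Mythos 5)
Your overall architecture is sound and in several places more self-contained than the paper: where the paper simply cites Campbell's Proposition~1 and Theorem~5 for the equivalence of (i)--(iii), you re-derive (i)$\Leftrightarrow$(ii) by a Schur-complement argument on $\mu\calE-\calA_\Q$, and (ii)$\Leftrightarrow$(iii) by specializing $\mu=i\omega$, computing $\Q_{i\omega}=\Q+B^HN^{-H}QRQ\,N^{-1}B\geq 0$ with $N=(J-R)Q-i\omega I$, and reading off its kernel via $\ker QRQ=\ker RQ$; these computations are correct. Your identification of the kernels in (iii) and (iv) -- that $(J-R)Qw=JQw$ for $w\in\ker RQ$, so the two resolvent conditions transfer into each other whenever $i\omega$ avoids both $\sigma((J-R)Q)$ and $\sigma(JQ)$ -- is also correct and more elementary than the paper's route, which instead verifies the commutation identity $(JQ-zI)^{-1}RQ((J-R)Q-zI)^{-1}=((J-R)Q-zI)^{-1}RQ(JQ-zI)^{-1}$ by direct computation. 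The step (iv)$\Leftrightarrow$(v) is handled the same way as in the paper.

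There is, however, a gap in how you transfer the quantifier ``$\exists\,\omega$'' between (iii) and (iv). You propose to pick a common $\omega$ avoiding both spectra ``once the pencil is regular''. That is fine for (iii)$\Rightarrow$(iv), since regularity is already available through (iii)$\Rightarrow$(ii)$\Rightarrow$(i). But for (iv)$\Rightarrow$(iii) (and hence (iv),(v)$\Rightarrow$(i)) regularity is exactly what remains to be proved, so invoking it is circular: if the witness $\omega_0$ of (iv) happens to satisfy $i\omega_0\in\sigma((J-R)Q)$, your argument provides no way to relocate it. The missing ingredient is the openness observation the paper uses: condition (iv) states that $\sbvec{\Q}{RQ(JQ-i\omega I)^{-1}B}$ has full column rank $m$, which persists under small perturbations of $\omega$ because the resolvent depends continuously on $\omega$; hence $\omega_0$ can be moved slightly so that $i\omega$ additionally avoids the finite set $\sigma((J-R)Q)$ while (iv) is retained, and then your kernel coincidence yields (iii). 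With this short openness argument inserted (it also symmetrizes (iii)$\Rightarrow$(iv) without any appeal to regularity), your proof closes.
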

\begin{proof}
The equivalence of (i)--(iii) follows directly from~Proposition 1 and Theorem 5 from~\cite{Camp76}, and the equivalence of (iv) and (v) is straightforward. It remains to prove that $s\mathcal{E}-\mathcal{A}_\Q$ is regular if, and only if, (iv) holds. To this end, we first show that for all $z\in\C\backslash\big(\sigma(JQ)\cup\sigma\big((J-R)Q\big)\big)$ we have
\begin{align*}
\big(JQ-z I%s 
\big)^{-1}RQ\big((J-R)Q-z I%s
\big)^{-1} = \big((J-R)Q-z I%
\big)^{-1}RQ\big(JQ-z I%s
\big)^{-1}.
\end{align*}
This follows immediately by applying $\big(JQ-z I\big)^{-1}$ from the left and from the right to the equality
\begin{align*}
RQ\big((J-R)Q-z I%s
\big)^{-1}\big(JQ-z I%s
\big) &  = RQ(JQ-RQ-z I%s
)^{-1}(JQ-z I%s
)\\
& = RQ(JQ-RQ-z I%s
)^{-1}(JQ-RQ-z I%s
+RQ)\\
& = RQ+RQ(JQ-RQ-z I%s
)^{-1}RQ\\
& = (JQ-RQ-z I%s
+RQ)(JQ-RQ-z I%s
)^{-1}RQ\\
& = (JQ-z I)(JQ-RQ-z I%s
)^{-1}RQ\\
& = \big(JQ-z I%s
\big)\big((J-R)Q-z I%s
\big)^{-1}RQ.
\end{align*}
Hence, for all $z\in\mathbb{C}\setminus\big(\sigma (JQ)\cup\sigma\big((J-R)Q\big)\big)$,
\begin{align*}
\ker RQ((J-R)Q-z I)^{-1}B &= \ker \big((J-R)Q-z I)\big)^{-1}RQ(JQ-z I)^{-1}B\\
&= \ker RQ(JQ-z I)^{-1}B.
\end{align*}
Now, note that if~\eqref{eq:Campbell's_port-Hamiltonian_condition} %(iii) 
holds for some $\omega\in\mathbb R$, then it holds in a neighborhood of this $\omega$. The same is true for~\eqref{eq:Campbell's_port-Hamiltonian_condition_ohne_R}. %(iv)
Since the spectrum of a matrix is finite, this proves the claim.
%Therefore, (iii) of the present proposition and \textcolor{magenta}{since regular pencils are invertible at cofinitely many points,} $s\mathcal{E}-\mathcal{A}_\Q$ is regular if, and only if,~\eqref{eq:Campbell's_port-Hamiltonian_condition} holds for some $\omega\in\mathbb{R}$ with $i\omega\notin\sigma(JQ)\cup\sigma\big((J-R)Q\big)$. Thus, $s\mathcal{E}-\mathcal{A}_\Q$ is indeed regular if, and only if,~\eqref{eq:Campbell's_port-Hamiltonian_condition_ohne_R} holds. This shows the assertion.
\end{proof}

\begin{Rem}
In \cite[Theorem 7]{Camp76} it is proven that regularity of $s\calE - \calA_S$ is also equivalent to the uniqueness of optimal controls for the OCP \eqref{eq:ultimate_port-Hamiltonian_OCP}, if they exist.
%non-existence of a lossless state transition from the zero state to itself with a non-zero control in $\ker S$.
\end{Rem}

Typically, the necessary and sufficient conditions for the regularity of the pencil $s\mathcal{E}-\mathcal{A}_\Q$ in Proposition~\ref{prop:Campbell_for_us}  are hard to verify in applications, since all of them are existence statements which contain resolvents.
% \begin{itemize}
%     \item are existence statements and
%     \item contain inverses.
% \end{itemize}
Therefore, we seek to find explicit or resolvent-free conditions.
%We have found necessary and sufficient conditions for regularity of our pencil $s\mathcal{E}-\mathcal{A}_\Q$. These conditions contain an inverse and might be hard to verify. Hence, we would prefer a condition that does not depend on an inverse. 
A \textit{necessary} condition, 
%that additionally does not depend on a complex number, 
that meets both requirements is the following result inspired by~\cite[Proposition 6]{Camp76}.

\begin{Prop}\label{prop:Campbell's_mistake}
Let $(J,R,Q,B)\in\PH$ and $\Q\in\mathbb{K}^{m\times m}$ with $\Q^H = \Q\geq 0$. If $s\mathcal{E}-\mathcal{A}_\Q$ is regular, then 
\begin{align}\label{eq:equation_5}
\ker \Q\,\cap\,\bigcap_{r = 0}^{n-1} \ker RQ(JQ)^rB = \set{0}.
\end{align}
The converse implication holds for single-input-single-output systems (i.e. the case $m = 1$), but not in general.
\end{Prop}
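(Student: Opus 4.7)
The plan is to prove necessity via Cayley--Hamilton applied to characterisation~(iv) in Proposition~\ref{prop:Campbell_for_us}, to settle the SISO converse by an asymptotic analysis of the resolvent in $\omega$, and to exhibit an explicit $2\times 2$ counterexample for the MIMO case.

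\emph{Necessity.} I would fix $\omega\in\R$ with $i\omega\notin\sigma(JQ)$ and $\ker\Q\cap\ker RQ(JQ-i\omega I)^{-1}B=\{0\}$ as furnished by Proposition~\ref{prop:Campbell_for_us}(iv), and take $v$ in the intersection on the left of~\eqref{eq:equation_5}. Cayley--Hamilton extends the vanishing $RQ(JQ)^rBv=0$ from $r<n$ to all $r\geq 0$. Since $i\omega\notin\sigma(JQ)$, polynomial division of $p(s)=\det(sI-JQ)$ by $s-i\omega$ expresses $(JQ-i\omega I)^{-1}$ as a polynomial in $JQ$ of degree $\le n-1$, so $RQ(JQ-i\omega I)^{-1}Bv$ is a linear combination of the $RQ(JQ)^rBv$ and therefore vanishes. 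Hence $v$ also lies in the intersection of~(iv), forcing $v=0$.

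\emph{SISO sufficiency.} With $m=1$, write $B=b\in\K^n$ and $\Q=s_0\geq 0$. If $s_0>0$, then $\ker\Q=\{0\}$ satisfies (iv) for any $\omega$ outside the finite set $\{t\in\R : it\in\sigma(JQ)\}$. If $s_0=0$, then~\eqref{eq:equation_5} supplies a smallest $r_0\leq n-1$ with $w:=RQ(JQ)^{r_0}b\neq 0$, and for $|\omega|$ beyond the spectral radius of $JQ$ the Neumann series yields
\begin{equation*}
RQ(JQ-i\omega I)^{-1}b = -(i\omega)^{-r_0-1}w+O(|\omega|^{-r_0-2}),
\end{equation*}
which is nonzero for $|\omega|$ large. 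Since $\ker\Q=\K$, this is exactly condition~(iv).

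\emph{MIMO counterexample.} I would exhibit $n=m=2$ with $\Q=0$, $J=\left[\begin{smallmatrix}0&-1\\1&0\end{smallmatrix}\right]$, $R=\diag(1,0)$, $Q=I_2$, and $B=\left[\begin{smallmatrix}0&1\\1&0\end{smallmatrix}\right]$. A direct computation gives $\ker RQB=\linspan\{e_1\}$ and $\ker RQ(JQ)B=\linspan\{e_2\}$, whose intersection is $\{0\}$, so~\eqref{eq:equation_5} holds. However, for every $\omega\neq\pm 1$, one computes $RQ(JQ-i\omega I)^{-1}B=(1-\omega^2)^{-1}\left[\begin{smallmatrix}1&-i\omega\\0&0\end{smallmatrix}\right]$, whose kernel is one-dimensional, so Proposition~\ref{prop:Campbell_for_us}(iv) fails and the pencil is singular. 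The main obstacle in the overall proof is precisely locating this counterexample: necessity and the SISO converse are routine once characterisation~(iv) is available, but coupling $R$ and $B$ so that the $\omega$-dependent kernel of $RQ(JQ-i\omega I)^{-1}B$ persists in being nontrivial for every admissible $\omega$, even though the discrete family $\{RQ(JQ)^rB\}_{r<n}$ has trivial joint kernel, requires exploiting a mismatch between the discrete and continuous families of kernels.
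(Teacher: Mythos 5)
Your proof is correct and follows the same overall strategy as the paper --- everything is routed through characterisation (iv) of Proposition~\ref{prop:Campbell_for_us} --- but the technical execution differs in ways worth noting. For necessity, the paper expands $RQ(JQ-i\omega I)^{-1}B$ as a Neumann series for $|\omega|>\|JQ\|$, which forces it first to argue that condition (iv) holds for all but finitely many $\omega$; you instead work at the single admissible $\omega$ furnished by (iv) and write $(JQ-i\omega I)^{-1}$ as a polynomial of degree at most $n-1$ in $JQ$ via division of the characteristic polynomial, so Cayley--Hamilton alone closes the argument with no convergence or genericity considerations --- a slightly cleaner step. For the SISO converse with $\Q=0$, the paper argues by contradiction using the identity theorem for power series, whereas you extract the leading term of the Neumann series at the smallest index $r_0$ with $RQ(JQ)^{r_0}b\neq 0$ and conclude directly that the resolvent expression is nonzero for all sufficiently large $|\omega|$; both are sound, and yours is constructive in that it exhibits which $\omega$ work. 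Your MIMO counterexample is of the same type as the paper's ($n=m=2$, $\Q=0$, the same $J$, $R$, $Q$, with $B$ a permutation matrix instead of the identity), and your verification of~\eqref{eq:equation_5} by computing $\ker RQB$ and $\ker RQ(JQ)B$ directly is more elementary than the paper's detour through the resolvent kernels and Lemma~\ref{rem:upcoming}; the explicit formula you give for $RQ(JQ-i\omega I)^{-1}B$ checks out, its kernel is one-dimensional for every $\omega\neq\pm1$, so the pencil is indeed singular while~\eqref{eq:equation_5} holds.
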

\begin{proof}
If $s\mathcal{E}-\mathcal{A}_\Q$ is regular, then~\eqref{eq:Campbell's_port-Hamiltonian_condition_ohne_R} holds for all but finitely many $\omega\in\mathbb{R}$ due to Proposition~\ref{prop:Campbell_for_us}\,(iv), and since the set of all $\mu\in\mathbb{K}$ so that $\mu\mathcal{E}-\mathcal{A}_\Q$ is invertible
%\in\mathrm{GL}(\mathbb{K}^n)$ 
is either empty or co-finite. %it has infinitely many elements. 
Thus, there exists some $\Omega>\|JQ\|$ so that for all $\omega\in\mathbb{R}\setminus(-\Omega,\Omega)$ one has that $i\omega\notin\sigma(JQ)$ and $\ker\Q\cap\ker RQ(JQ-i\omega I)^{-1}B = \set{0}$.
%\begin{align}\label{eq:brauchich}
%\mbox{for all}\ \omega\in\mathbb{R}\setminus(-\Omega,\Omega):\, i\omega\notin\sigma(JQ)\quad\text{and}\quad\ker\Q\cap \ker RQ\big((JQ-i\omega)\big)^{-1}B = \set{0}.
%\end{align}
Hence, for all $\omega\in\mathbb{R}\setminus(-\Omega,\Omega)$ the Neumann series 
\[
(JQ-i\omega I)^{-1} = -\sum_{k = 0}^\infty\left(\frac{-i}{\omega}\right)^{k+1}(JQ)^k
\]
converges absolutely and  
\begin{align}\label{eq:Neumann}
RQ(JQ-i\omega I)^{-1}B = 
%iRQ\left(\sum_{k = 0}^\infty\frac{1}{\omega^{k+1}}(-iJQ)^k%\right)B = 
-\sum_{k = 0}^\infty\left(\frac{-i}{\omega}\right)^{k+1} RQ(JQ)^kB.
\end{align}
Using the  Cayley-Hamilton Theorem,~see e.g. \cite[Theorem 8.6]{LieM15a}, we have that for all $r\geq n$ there exist coefficients $a_0^r,\ldots,a_{n-1}^r\in\mathbb{K}$, such that 
%\begin{align*}
%\forall r\geq n~\exists a_0^r,\ldots,a_{n-1}^r\in%\mathbb{K}: 
\[
(JQ)^r = \sum_{j = 0}^{n-1} a_j^r(JQ)^j.
\]
%\end{align*}
Hence, we conclude that for all $r\geq n$
\begin{align*}
%\forall r\geq n:\quad 
\bigcap_{\rho = 0}^{n-1}\ker RQ(JQ)^\rho B\,\subset\, \ker \sum_{\rho = 0}^{n-1} a_\rho^r RQ(JQ)^\rho B = \ker RQ(JQ)^rB.
\end{align*}
This shows that $\ker\Q\cap\bigcap_{r = 0}^{n-1}\ker RQ(JQ)^rB = \ker\Q\cap\bigcap_{r = 0}^{\infty}\ker R(JQ)^rB$. Therefore,  for all $v\in\ker\Q\cap\bigcap_{r = 0}^{n-1}\ker RQ(JQ)^rB$ and for all 
$\omega\in\mathbb{R}\setminus(-\Omega,\Omega)$, we have that 
\begin{align*}
%\forall \omega\in\mathbb{R}\setminus(-\Omega,%\Omega):\quad 
RQ(JQ-i\omega I)^{-1}Bv = -\sum_{k = 0}^\infty RQ(JQ)^kBv\left(\frac{-i}{\omega}\right)^{k+1} = 0
\end{align*}
and hence
%we conclude from~\eqref{eq:brauchich} that 
$v = 0$. 

%We show that the converse implication holds f
Consider the case that $m=1$, i.e. $\Q\in\mathbb{K}^{1\times 1}$ 
%is a number 
and $B\in\K^{n\times 1}$.
%is a vector. 
If $\Q$ is non-zero, then the pencil $s\mathcal{E}-\mathcal{A}_\Q$ is obviously regular. If $\Q=0$ then
%, so that it remains to consider the case $\Q = 0$. In %that case 
Proposition~\ref{prop:Campbell_for_us}\,(iv) yields that $s\mathcal{E}-\mathcal{A}_\Q$ is regular if, and only if, there exists $\omega\in\mathbb{R}$, with
$i\omega\notin\sigma(JQ)$ and $RQ(JQ-i\omega I)^{-1}B\neq 0$.
%\begin{align*}
%    \exists \omega\in\mathbb{R}:\,
%i\omega\notin\sigma(JQ)\quad\text{and}\quad RQ\big(JQ-%i\omega\big)^{-1}B\neq 0.
%\end{align*}
%Let~\eqref{eq:equation_5} be true. Seeking a contradiction, assume 
Seeking a contradiction, assume that~\eqref{eq:equation_5} holds and that $RQ(JQ-i\omega I)^{-1}B = 0$ for all $\omega\in\mathbb{R}$ with $i\omega\notin\sigma(JQ)$. In this case, the representation~\eqref{eq:Neumann} and the identity theorem for power series \cite[Corollaries 1.2.4 and 1.2.7]{KranPark02} yield that $RQ(JQ)^rB = 0$ for all $r\in\mathbb{N}_0$, which
%therefore in particular~\eqref{eq:equation_5} does not hold, which 
contradicts our assumption. Therefore $s\mathcal{E}-\mathcal{A}_\Q$ is indeed regular.

The following example shows that the converse implication does not hold in general.
%It remains to verify that the converse implication does in general not hold true. For this, consider 
Let $n = m = 2$, $Q = B = I_2$, $\Q = 0_{2\times 2}$ and
\begin{align*}
J = \begin{bmatrix}
0 & -1\\
1 & 0
\end{bmatrix},\qquad R = \begin{bmatrix}
1 & 0\\
0 & 0
\end{bmatrix}.
\end{align*}
Then  for 
$\omega\in\mathbb{R}\setminus\set{\pm 1}$
we have
\begin{align*}
\ker RQ(JQ-i\omega I)^{-1} = (J-i\omega I)\ker R = \set{(z,i\omega z)^\top\,\big\vert\,z\in\mathbb{C}}\neq \set{0}.
\end{align*}
Since $\ker\Q = \mathbb{C}^m$, Proposition~\ref{prop:Campbell_for_us}\,(iv) yields that $s\mathcal{E}-\mathcal{A}_\Q$ is singular. On the other hand, we have, for all $\omega,\mu\in\mathbb{R}\setminus\set{\pm 1}$ with $\omega\neq \mu$ %$\omega\neq\mu$ with $\omega,\mu\neq\pm 1$,
\begin{align*}
\ker R(J-i\omega I)^{-1}\cap \ker R(J-i\mu I)^{-1} = 
\set{0}.
\end{align*}
%and hence~\eqref{eq:equation_5} holds true. 
Using the Neumann series, 
%we see that 
this yields that~\eqref{eq:equation_5} holds; for details see the upcoming Lemma~\ref{rem:upcoming}. This shows that~\eqref{eq:equation_5} is, in general, not sufficient for regularity of $s\mathcal{E}-\mathcal{A}_\Q$.
\end{proof}

We have seen that the condition~\eqref{eq:equation_5} is necessary but not sufficient for regularity of $s\mathcal{E}-\mathcal{A}_\Q$. The following lemma reveals the underlying reason for this shortcoming: $s\mathcal{E}-\mathcal{A}_S$ is regular if, and only if, $\ker S\cap \ker RQ(JQ-i\omega I)^{-1}B = \set{0}$ for one fixed $\omega\in\mathbb{R}$, whereas the condition~\eqref{eq:equation_5} holds if and only if the intersection of the kernel of $RQ(JQ-i\omega I)^{-1}B$ over infinitely many $\omega\in\mathbb{R}$ has trivial intersection with $\ker S$. %is trivial.

\begin{Lem}\label{rem:upcoming}
Let $(J,R,Q,B)\in\PH$ and $\Q\in\mathbb{K}^{m\times m}$ with $\Q^H = \Q\geq 0$. Then~\eqref{eq:equation_5} holds if, and only if, there exists $\Omega>0$ such that all $ \omega_1>\omega_0>\Omega$ fulfil the property
\begin{align}\label{eq:Ringstuff}
\ker S\ \cap \bigcap_{\omega_0<\omega<\omega_1}\ker RQ(JQ-i\omega I)^{-1}B = \set{0}.
\end{align}
\end{Lem}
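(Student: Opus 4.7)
The plan is to connect the two conditions through the Neumann series expansion of the resolvent, exactly as in the proof of Proposition~\ref{prop:Campbell's_mistake}. Choose $\Omega>\|JQ\|$, so that for every real $\omega$ with $|\omega|>\Omega$ one has $i\omega\notin\sigma(JQ)$ and the absolutely convergent expansion
\begin{align*}
RQ(JQ-i\omega I)^{-1}B = -\sum_{k=0}^\infty \left(\tfrac{-i}{\omega}\right)^{k+1} RQ(JQ)^kB
\end{align*}
holds. The Cayley--Hamilton theorem also guarantees that for any $r\ge n$, the matrix $(JQ)^r$ is a $\mathbb{K}$-linear combination of $I,JQ,\ldots,(JQ)^{n-1}$, so that
\begin{align*}
\bigcap_{r=0}^{n-1}\ker RQ(JQ)^rB = \bigcap_{r=0}^{\infty}\ker RQ(JQ)^rB.
\end{align*}

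For the forward implication, I would fix $\Omega$ as above and arbitrary $\omega_1>\omega_0>\Omega$. Pick $v$ in the intersection on the left-hand side of~\eqref{eq:Ringstuff}. Applying the Neumann series to $Bv$ shows that the (vector-valued) power series
\begin{align*}
\omega\mapsto \sum_{k=0}^\infty\left(\tfrac{-i}{\omega}\right)^{k+1} RQ(JQ)^kBv
\end{align*}
vanishes on the interval $(\omega_0,\omega_1)$. The identity theorem for power series then forces every coefficient $RQ(JQ)^kBv$ to be zero. Together with $v\in\ker S$ and~\eqref{eq:equation_5} (and the Cayley--Hamilton reduction above), this yields $v=0$.

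For the converse, I would argue by contrapositive. Suppose~\eqref{eq:equation_5} fails, so there is a nonzero $v\in\ker S$ with $RQ(JQ)^rBv=0$ for every $r=0,\ldots,n-1$, and hence, by Cayley--Hamilton, for every $r\ge 0$. Plugging this into the Neumann series, we obtain $RQ(JQ-i\omega I)^{-1}Bv=0$ for every $\omega\in\mathbb{R}$ with $|\omega|>\Omega$. Therefore for \emph{any} choice of $\Omega'\ge\Omega$ and any $\omega_1>\omega_0>\Omega'$, the vector $v$ lies in the intersection in~\eqref{eq:Ringstuff}, contradicting its triviality.

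The only subtle step is the forward direction, where one must justify term-by-term vanishing of the Neumann series from its vanishing on a nonempty open interval; this is precisely the identity theorem for (vector-valued) power series already cited in the proof of Proposition~\ref{prop:Campbell's_mistake}, so no new machinery is required. The rest is bookkeeping with Cayley--Hamilton and the resolvent expansion.
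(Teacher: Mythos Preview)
Your proposal is correct and follows essentially the same route as the paper: both arguments hinge on the Neumann expansion $RQ(JQ-i\omega I)^{-1}B = -\sum_{k\ge 0}(-i/\omega)^{k+1}RQ(JQ)^kB$, the Cayley--Hamilton reduction to finitely many kernels, and the identity theorem for power series (the paper makes the substitution $\phi=1/\omega$ explicit to justify the latter, which you leave implicit). One small cosmetic fix in your contrapositive: you must rule out \emph{every} $\Omega'>0$, not only $\Omega'\ge\Omega$, but this is immediate by taking $\omega_1>\omega_0>\max(\Omega',\|JQ\|)$.
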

\begin{proof}
Let $\phi := \frac{1}{\omega}$. Then, the series~\eqref{eq:Neumann} is a formal power series $F\in\mathbb{K}^{n\times m}[[\phi]]$ with non-vanishing convergence radius $\rho\geq \frac{1}{\norm{JQ}}$, where we use the convention $\frac{1}{0} = \infty$. Each $v\in\mathbb{K}^m$ induces then the formal power series $F(\cdot)v\in\mathbb{K}^n[[\phi]]$ whose convergence radius is at least $\rho$. By definition of $F$, we then have for all $\phi\in (-\rho,\rho)$ and for all $ v\in\mathbb{K}^m$ that
\begin{align*}
 F(\phi)v =  RQ\left(JQ-\frac{i}{\phi} I\right)^{-1}Bv.
\end{align*}
Therefore, the identity theorem for power series \cite[Corollaries 1.2.4 and 1.2.7]{KranPark02} yields the equivalence
\begin{align*}
v\in\ker \Q\cap\bigcap_{r = 0}^{n-1} \ker RQ(JQ)^rB \iff v\in\ker S~\text{and}~F(\cdot)v\equiv 0.
\end{align*}
%if, and only if, $F(\cdot)v$ vanishes on its domain 
The latter is equivalent to
\begin{align*}
v\in \ker S\ \cap \bigcap_{\omega_0<\abs{\omega}<\omega_1}\ker RQ(JQ-i\omega I)^{-1}B
\end{align*}
for all $\omega_0<\omega_1\in (1/\rho,\infty)$.
This shows that~\eqref{eq:equation_5} and~\eqref{eq:Ringstuff} are indeed equivalent. 
\end{proof}

As we have demonstrated, the inverse-free necessary condition~\eqref{eq:equation_5} is not sufficient for regularity of $s\mathcal{E}-\mathcal{A}_S$. However, %in the calculations for our counterexample, we used the relation
%\begin{align*}
%\forall A\in\mathbb{K}^{n\times m}~\forall E\in\mathrm{GL}(\mathbb{K}^m): \ker AE^{-1} = E\ker A.
%\end{align*}
%This relation can be easily used to characterize regularity of $s\mathcal{E}-\mathcal{A}_\Q$ without using an inverse.
the next proposition presents an inverse-free necessary and sufficient condition.

\begin{Prop}\label{prop:final_equivalence}
Let $(J,R,Q,B)\in\PH$ and $\Q\in\mathbb{K}^{m\times m}$ with $\Q^H = \Q\geq 0$. The pencil $s\mathcal{E}-\mathcal{A}_\Q$ is regular if, and only if, there exists $\omega\in\mathbb{R}$ such that 
\begin{align}\label{eq:some_kerR_imB_condition}
i\omega\notin\sigma(JQ)\quad\text{and}\quad B\ker \Q\cap (JQ-i\omega I)\ker RQ = \set{0} = \ker B\cap\ker\Q.
\end{align}
\end{Prop}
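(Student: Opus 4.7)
My plan is to deduce this directly from Proposition~\ref{prop:Campbell_for_us}\,(v), which asserts that regularity of $s\mathcal E-\mathcal A_\Q$ is equivalent to the existence of some $\omega\in\mathbb R$ with $i\omega\notin\sigma(JQ)$ and
\[
\ker\Q\cap B^{-1}(JQ-i\omega I)\ker RQ = \{0\},
\]
where $B^{-1}(\cdot)$ denotes the preimage under $B$. The whole statement therefore reduces to rewriting this intersection in a form that avoids taking a preimage under $B$.

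The key step is an elementary linear-algebra fact: for any linear map $T\colon X\to Y$ and any subspace $W\subseteq Y$, one has $T^{-1}(W)=\{0\}$ if and only if $\ker T = \{0\}$ and $\operatorname{im}(T)\cap W=\{0\}$. I would apply this with $T:=B|_{\ker\Q}\colon \ker\Q\to\K^n$ and $W:=(JQ-i\omega I)\ker RQ$. Then $\ker T = \ker B\cap\ker\Q$ and $\operatorname{im}(T) = B\ker\Q$, so the condition $\ker\Q\cap B^{-1}(JQ-i\omega I)\ker RQ = \{0\}$ becomes
\[
\ker B\cap\ker\Q=\{0\}\quad\text{and}\quad B\ker\Q\,\cap\,(JQ-i\omega I)\ker RQ = \{0\},
\]
which is precisely~\eqref{eq:some_kerR_imB_condition}. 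In the forward direction: if $v\in\ker\Q$ with $Bv\in(JQ-i\omega I)\ker RQ$, then either $Bv=0$, giving $v\in\ker B\cap\ker\Q=\{0\}$, or $Bv\neq 0$, giving a nonzero element of $B\ker\Q\cap(JQ-i\omega I)\ker RQ$; either way we obtain a contradiction. The reverse direction is analogous.

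The proof is essentially bookkeeping and I do not foresee any real obstacle. The only thing to verify is that the same $\omega$ works on both sides, which is automatic from the construction, and that the condition $\ker B\cap\ker\Q=\{0\}$ is genuinely $\omega$-independent, which is obvious from its statement. Hence the equivalence of Proposition~\ref{prop:final_equivalence} is established.
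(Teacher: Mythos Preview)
Your proof is correct and follows essentially the same route as the paper: the paper argues via Proposition~\ref{prop:Campbell_for_us}\,(iv) and unwinds the condition $\ker\Q\cap\ker RQ(JQ-i\omega I)^{-1}B=\{0\}$ element by element, while you start from the equivalent condition~(v) and package the same computation as the abstract preimage identity $T^{-1}(W)=\{0\}\iff \ker T=\{0\}$ and $\im T\cap W=\{0\}$. The substance of the two arguments is identical.
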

\begin{proof}
Assume that the pencil $s\mathcal{E}-\mathcal{A}_\Q$ is regular. By Proposition~\ref{prop:Campbell_for_us}\,(iv), there exists some $\omega\in\mathbb{R}$ with $i\omega\notin\sigma(JQ)$ such that $\ker\Q\cap\ker RQ(JQ-i\omega I)^{-1}B = \set{0}$. In particular, this implies that $\ker\Q\cap\ker B = \set{0}$. Let $v\in B\ker \Q\cap (JQ-i\omega I)\ker RQ$. Then, there exists $u\in\ker\Q$ so that $v = Bu$ and $(JQ-i\omega I)^{-1}Bu\in\ker RQ$ or, equivalently, $u\in \ker RQ(JQ-i\omega I)^{-1}B$. Thus, $u = 0$ and hence $v = 0$, so that~\eqref{eq:some_kerR_imB_condition} holds.

Conversely, let $\omega\in\mathbb{R}$ be such that~\eqref{eq:some_kerR_imB_condition} holds for this particular $\omega$. Let $u\in\ker\Q\cap\ker RQ(JQ-i\omega I)^{-1}B$ and set $v:=Bu\in B\ker\Q$. Then $RQ(JQ-i\omega I)^{-1}v = 0$, i.e. $v\in (JQ-i\omega I)\ker RQ$. Hence, we obtain $v = 0$ or, equivalently, $u\in\ker B$. Since $\ker B\cap\ker \Q = \{0\}$, we conclude that $u = 0$ and Proposition~\ref{prop:Campbell_for_us}\,(iv) yields that $s\mathcal{E}-\mathcal{A}_\Q$ is regular.
\end{proof}

%\color{blue}

%\begin{remark}\label{rem:better_final_equivalence}
%    Replacing ``Proposition~\ref{prop:Campbell_for_us}\,(iv)'' and ``$JQ$'' in the statement and proof of Proposition~\ref{prop:final_equivalence} with ``Proposition~\ref{prop:Campbell_for_us}\,(iii)'' and ``$(J-R)Q$'', respectively, yields that $s\mathcal{E}-\mathcal{A}_\Q$ is regular if, and only if, there exists $\omega\in\mathbb{R}$ so that
%\begin{align}\label{eq:some_better_kerR_imB_condition}
%i\omega\notin\sigma((J-R)Q)\quad\text{and}\quad B\ker \Q\cap ((J-R)Q-i\omega I)\ker RQ = \set{0} = \ker B\cap\ker\Q.
%\end{align}
%Furthermore, an analogous argumentation as for the equivalence of Proposition~\ref{prop:Campbell_for_us}\,(iii) and (iv) yields that $\omega$ in~\eqref{eq:some_better_kerR_imB_condition} can be chosen so that $i\omega\notin\sigma((J-R)Q)\cap \sigma(-JQ)$.
%\end{remark}

%\color{black}

Condition~\eqref{eq:some_kerR_imB_condition} implies, in particular, that the block matrix $[B,JQ-i\omega I]$ acts injectively on the cartesian product $\ker\Q\times\ker RQ$. This immediately yields a rank criterion for the regularity of $s\mathcal{E}-\mathcal{A}_\Q$.

\begin{Cor}\label{cor:rank_criterion}
Let $(J,R,Q,B)\in\PH$ and $\Q\in\mathbb{K}^{m\times m}$ with $\Q = \Q^H\geq 0$. Then the pencil $s\mathcal{E}-\mathcal{A}_\Q$ is regular if, and only if, there exists $\omega\in \mathbb R$ such that
\begin{align}\label{eq:condition_1}
%\exists \omega\in\mathbb{R}: 
\dim\big(B\ker \Q + (JQ-i\omega I)\ker RQ\big) = m+n-(\rk \Q + \rk RQ). 
\end{align}
\end{Cor}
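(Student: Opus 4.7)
The plan is to deduce Corollary~\ref{cor:rank_criterion} from Proposition~\ref{prop:final_equivalence} by translating the three transversality conditions of that proposition into a single rank equality via the Grassmann dimension formula. The key device is the auxiliary linear map
\[
\phi_\omega:\ker\Q\times\ker RQ\to\K^n,\qquad (u,v)\mapsto Bu+(JQ-i\omega I)v,
\]
whose image coincides with $B\ker\Q+(JQ-i\omega I)\ker RQ$ and whose domain has dimension $(m-\rk\Q)+(n-\rk RQ)$. The rank-nullity theorem then shows that condition~\eqref{eq:condition_1} is equivalent to injectivity of $\phi_\omega$.

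First I would unpack this injectivity: testing on pairs $(u,0)$, $(0,v)$, and $(u,-v)$ with $Bu=(JQ-i\omega I)v$, one sees that $\phi_\omega$ is injective if and only if the three intersections
\[
\ker B\cap\ker\Q,\qquad \ker(JQ-i\omega I)\cap\ker RQ,\qquad B\ker\Q\cap(JQ-i\omega I)\ker RQ
\]
are all trivial. Whenever $i\omega\notin\sigma(JQ)$ the middle intersection is automatic, and the remaining two match precisely the hypotheses of Proposition~\ref{prop:final_equivalence}. From this, the direction ``regularity implies \eqref{eq:condition_1}'' is immediate: Proposition~\ref{prop:final_equivalence} yields some $\omega\in\R$ with $i\omega\notin\sigma(JQ)$ satisfying the relevant transversality conditions, hence $\phi_\omega$ is injective and \eqref{eq:condition_1} holds.

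For the converse, if \eqref{eq:condition_1} holds at some $\omega_0$ with $i\omega_0\notin\sigma(JQ)$, Proposition~\ref{prop:final_equivalence} immediately yields regularity. The main obstacle is the residual case $i\omega_0\in\sigma(JQ)$, which I would handle by a small perturbation argument. Injectivity of $\phi_\omega$ is equivalent to its matrix (in fixed bases of $\ker\Q$ and $\ker RQ$) having full column rank; this matrix depends linearly on $\omega$, and rank is lower semi-continuous, so $\{\omega\in\R:\phi_\omega\text{ injective}\}$ is open in $\R$. Since $\sigma(JQ)\cap i\R$ is finite, I can choose $\omega'$ arbitrarily close to $\omega_0$ with $i\omega'\notin\sigma(JQ)$ and $\phi_{\omega'}$ still injective; Proposition~\ref{prop:final_equivalence} then delivers regularity of $s\mathcal{E}-\mathcal{A}_\Q$, completing the proof.
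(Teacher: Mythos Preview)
Your proposal is correct and follows essentially the same approach as the paper: the linear map $\phi_\omega$ is precisely the map whose matrix in suitable bases is the paper's $[BZ_S,(JQ-i\omega I)Z_R]$, and your injectivity/rank-nullity analysis mirrors the paper's direct-sum computation. The only cosmetic difference is in the converse step, where the paper observes that the rank-deficient set is either all of $\R$ or finite (since the relevant minors are polynomials in $\omega$), whereas you use the weaker but equally sufficient fact that the full-rank set is open.
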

\begin{proof}
Let $Z_\Q\in\mathbb{K}^{m\times (m-\rk \Q)}$ and $Z_R\in\mathbb{K}^{n\times (n-\rk RQ)}$ be matrices with full column rank % matrices with
such that $\im Z_\Q = \ker \Q$ and $\im Z_R = \ker RQ$
and we have the formula\[
\dim (B\ker \Q + (JQ-i\omega I)\ker RQ) = \rk[BZ_\Q,(JQ-i\omega I)Z_R].
\]
Let $s\mathcal{E}-\mathcal{A}_\Q$ be regular. In view of Proposition~\ref{prop:final_equivalence}, this is the case if, and only if, the restriction of $B$ to $\ker \Q$ is injective and $B\ker \Q\cap (JQ-i\omega I)\ker RQ  =\set{0}$ for some $\omega\in\mathbb{R}$ so that $JQ-i\omega I$ is invertible. Hence, for this particular $\omega$ we have $\rk BZ_\Q = \dim B\ker \Q = \dim\ker \Q = m-\rk \Q$ and $\rk (JQ-i\omega I)Z_R = \dim (JQ-i\omega I)\ker RQ = \dim\ker RQ = n-\rk RQ$ and the sum $B\ker \Q + (JQ-i\omega I)\ker RQ$ is a direct sum. Thus, \eqref{eq:condition_1} holds, i.e.
\begin{align*}
\dim (B\ker \Q + (JQ-i\omega I)\ker RQ)
& = \dim(B\ker \Q) + \dim((JQ-i\omega I)\ker RQ)\\
& = n+m-(\rk \Q+\rk RQ).
\end{align*}
Conversely, assume that~\eqref{eq:condition_1} is satisfied. The set of all $\omega\in\mathbb{R}$ such that $\rk[BZ_\Q,(JQ-i\omega I)Z_R]<n+m-(\rk \Q+\rk RQ)$ 
%is 
%a real algebraic variety and hence 
either coincides with $\mathbb{R}$ or is finite. By condition~\eqref{eq:condition_1}, the former does not hold.
%yields that this variety does not coincide %with $\mathbb{R}$ and is hence 
In particular, there is some $\omega\in\mathbb{R}$ so that $i\omega\notin\sigma(JQ)$ such that~\eqref{eq:condition_1} holds at this $\omega$. Then, we can conclude analogously that~\eqref{eq:some_kerR_imB_condition} holds. From Proposition~\ref{prop:final_equivalence}, we conclude that the pencil $s\mathcal{E}-\mathcal{A}_\Q$ is regular.
\end{proof}
%\begin{remark}
Whereas the previous characterizations of regularity required the choice $i\omega\notin\sigma(JQ)$, condition \eqref{eq:condition_1} can be verified without computing the spectrum of $JQ$. % as it does not involve the requirement $i\omega\notin\sigma(JQ)$. It is not possible to remove this requirement in any of the previously proven characterisations of regularity.
%\end{remark}
%%%%%%%%%%%%%%%%%%%%%%%%%%%%%%%%%%%%%%%%%%%%%%%%%%%%%%%%%%%%%%%%%%%%%%%%%%%%%%%%%%%%%%%%%%%%%%%%%%%%%%%%%%%%%
\subsection{Rank-minimal regularization of the cost functional}\label{subsec:minreg}
%%%%%%%%%%%%%%%%%%%%%%%%%%%%%%%%%%%%%%%%%%%%%%%%%%%%%%%%%%%%%%%%%%%%%%%%%%%%%%%%%%%%%%%%%%%%%%%%%%%%%%%%%%%%%
Recall that one of our goals is to choose a positive semidefinite matrix $\Q\in\K^{m\times m}$ of minimal rank, 
%as small as possible, in terms of its ank, 
to achieve
%while retaining 
regularity of the optimality system. In the following we % will address this question.
construct a regularization term of minimal rank in the cost functional of \eqref{eq:ultimate_port-Hamiltonian_OCP} such that the resulting pencil in the optimality system is regular.

\begin{Lem}\label{lem:rank_minimal_realization}
Let $(J,R,Q,B)\in\PH$ be such that the pencil $s\mathcal{E}-\mathcal{A}_S$ is singular for $S=0$. Let 
$\omega\in\mathbb{R}$ with $i\omega\notin\sigma(JQ)$ and consider 
the subspace
\begin{align*}
V := B^{-1}(JQ-i\omega I)\ker RQ.
\end{align*}
Let $\Q\in\mathbb{K}^{m\times m}$ with $\Q = \Q^H\geq 0$ and $\ker \Q = V^\bot$ (e.g. the orthogonal projection onto $V$). Then $s\mathcal{E}-\mathcal{A}_\Q$ is a regular pencil. Moreover, if $\omega$ is chosen such that
\begin{align}\label{eq:dim_minimal}
    \dim B^{-1}(JQ-i\omega I)\ker RQ = \min\set{\dim B^{-1}(JQ-z I)\ker RQ\,\big\vert\, z\in\mathbb{C}},
\end{align}
which holds for all but finitely many $\omega\in\mathbb{R}$ with $i\omega\notin\sigma(JQ)$, then
\begin{align}\label{eq:rank_minimal}
\rk \Q = \min\set{\rk\widehat{\Q}\,\left\vert\,s\mathcal{E}-\mathcal{A}_{\widehat{\Q}}~\text{regular}\right.}.
\end{align}
\end{Lem}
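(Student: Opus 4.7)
The plan is to apply the regularity criterion from Proposition~\ref{prop:Campbell_for_us}\,(v), which states that $s\mathcal{E}-\mathcal{A}_{\widehat{\Q}}$ is regular if and only if there exists $\omega\in\mathbb{R}$ with $i\omega\notin\sigma(JQ)$ such that $\ker\widehat{\Q}\cap V_\omega=\{0\}$, where I write $V_\omega := B^{-1}(JQ-i\omega I)\ker RQ$. A useful preliminary observation is that $0\in\ker RQ$ implies $\ker B\subseteq V_\omega$, so the additional condition $\ker B\cap\ker\widehat{\Q}=\{0\}$ from Proposition~\ref{prop:final_equivalence} is automatically implied by $\ker\widehat{\Q}\cap V_\omega=\{0\}$.

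The first assertion follows immediately: by construction the candidate $\Q$ satisfies $\ker\Q = V^\bot$, hence $\ker\Q\cap V = V^\bot\cap V = \{0\}$, and the criterion yields regularity of $s\mathcal{E}-\mathcal{A}_\Q$. Since $\Q$ is Hermitian, one further has $\rk\Q = m - \dim\ker\Q = \dim V$.

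For \eqref{eq:rank_minimal}, I would prove a matching lower bound. Let $\widehat{\Q}^H=\widehat{\Q}\geq 0$ be such that $s\mathcal{E}-\mathcal{A}_{\widehat{\Q}}$ is regular; then there exists $\omega'$ with $\ker\widehat{\Q}\cap V_{\omega'}=\{0\}$, whence $\dim\ker\widehat{\Q}+\dim V_{\omega'}\leq m$, so $\rk\widehat{\Q}\geq\dim V_{\omega'}\geq d^*$, where $d^* := \min_{z\in\mathbb{C}\setminus\sigma(JQ)}\dim V_z$. Under~\eqref{eq:dim_minimal}, the constructed $\Q$ achieves exactly $\rk\Q=\dim V = d^*$, matching this lower bound.

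The remaining task, which I expect to be the main technical obstacle, is to verify that \eqref{eq:dim_minimal} holds for all but finitely many $\omega\in\mathbb{R}$. I would fix a full-column-rank matrix $Z_R\in\mathbb{K}^{n\times(n-\rk RQ)}$ with $\im Z_R=\ker RQ$ and note that $v\in V_z$ iff there exists $w$ with $Bv=(JQ-zI)Z_R w$. Since $(JQ-zI)Z_R$ is injective for $z\notin\sigma(JQ)$, the projection onto the $v$-component identifies $V_z$ linearly with $\ker[B,\,-(JQ-zI)Z_R]$, yielding the rank–nullity identity
\begin{align*}
\dim V_z \;=\; m + \dim\ker RQ - \rk[\,B,\;(JQ-zI)Z_R\,].
\end{align*}
The right-hand matrix depends polynomially on $z$, and the rank of a polynomial matrix attains its generic (maximal) value off a proper algebraic subset of $\mathbb{C}$, which is finite; equivalently, the vanishing of all maximal minors cuts out a finite set. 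Intersecting this cofinite set with $\{\omega\in\mathbb{R}:i\omega\notin\sigma(JQ)\}$ gives the desired cofinite subset of real $\omega$.
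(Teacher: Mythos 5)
Your proof is correct and takes essentially the same route as the paper: regularity follows from Proposition~\ref{prop:Campbell_for_us}\,(v) because $\ker \Q\cap V=V^\perp\cap V=\{0\}$, and rank-minimality from the same dimension count, with the ``all but finitely many $\omega$'' claim justified by the generic (maximal) rank of the polynomial matrix $[B,(JQ-zI)Z_R]$ --- which is exactly the paper's argument, phrased there via maximality of $\dim\big(\im B+(JQ-zI)\ker RQ\big)$. Your only (harmless) reorganization is to bound every admissible $\widehat{\Q}$ from below by $d^*=\min_{z\in\mathbb{C}\setminus\sigma(JQ)}\dim B^{-1}(JQ-zI)\ker RQ$, rather than, as in the paper, picking one $\omega_0$ that simultaneously witnesses $\ker\widehat{\Q}\cap B^{-1}(JQ-i\omega_0I)\ker RQ=\{0\}$ and the minimal dimension; note that, just like the paper's proof, your genericity step only controls $z\notin\sigma(JQ)$, i.e.\ both arguments implicitly read the minimum in \eqref{eq:dim_minimal} over the resolvent set of $JQ$.
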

\begin{proof}
Since $V\cap V^\bot = \set{0}$, Proposition~\ref{prop:Campbell_for_us}\,(v) yields that $s\mathcal{E}-\mathcal{A}_\Q$ is regular. Assume now that~\eqref{eq:dim_minimal} holds. We show that $\Q$ is rank-minimal in the sense of~\eqref{eq:rank_minimal}. Let $\widehat{\Q}\in\mathbb{K}^{m\times m}$ with $\rk\widehat{\Q}\leq\rk\Q$ so that $\widehat{\Q} = \widehat{\Q}^H\geq 0$ and $s\mathcal{E}-\mathcal{A}_{\widehat{\Q}}$ is regular. By Proposition~\ref{prop:Campbell_for_us}\,(v), we find some $\omega_0\in\mathbb{R}$ so that $i\omega_0\notin\sigma(JQ)$ and $\ker\widehat{\Q}\cap B^{-1}(JQ-i\omega_0 I)\ker RQ = \set{0}$. Since~\eqref{eq:dim_minimal} holds if, and only if, 
\begin{align*}
    \dim \left(\mathrm{im}\,B + (JQ-i\omega I)\ker RQ\right) = \max\set{\dim\left(\mathrm{im}\,B + (JQ-z I)\ker RQ\right)\,\big\vert\,z\in\mathbb{C}},
\end{align*}
the condition~\eqref{eq:dim_minimal} is fulfilled for all but finitely many $\omega\in\mathbb{R}$. Therefore, we may choose $\omega_0$ so that $\dim B^{-1}(JQ-i\omega_0 I)\ker RQ = \dim V$. We conclude that
\begin{align*}
m & \geq \dim \big(\ker\widehat{\Q} + B^{-1}(JQ-i\omega_0 I)\ker RQ\big)\\
& = \dim\ker\widehat{\Q} + \dim B^{-1}(JQ-i\omega_0 I)\ker RQ\\
& \geq \dim\ker\Q+\dim V = \dim V^\bot + \dim V = m.
\end{align*}
Therefore the inequalities are equalities and we conclude $\dim\ker\widehat{\Q} = \dim\ker\Q$ or, equivalently, $\rk\widehat{\Q} = \rk\Q$. Thus, $\Q$ is indeed rank-minimal in the sense of~\eqref{eq:rank_minimal}.
\end{proof}

The following result extends \cite[Theorem 8]{FaulMascPhilSchaWort21} to regularized cost functionals and not necessarily invertible matrices $Q$, providing a sufficient condition for regularity of $s\mathcal{E}-\mathcal{A}_\Q$.

\begin{Prop}\label{prop:Schaller_Philipp_für_uns}
Let $(J,R,Q,B)\in\PH$ and $\Q\in\mathbb{K}^{m\times m}$ with $\Q = \Q^H\geq 0$. If
\begin{align}\label{eq:equation_2}
B\ker\Q\cap\ker RQ = \set{0} = \ker B\cap\ker\Q,
\end{align}
then $s\mathcal{E}-\mathcal{A}_\Q$ is regular. The converse is false, in general. Moreover, if \eqref{eq:equation_2} holds, then optimal controls for the OCP \eqref{eq:ultimate_port-Hamiltonian_OCP} (if they exist) are unique.
\end{Prop}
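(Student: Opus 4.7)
The plan is to derive the first claim by reducing to Proposition~\ref{prop:final_equivalence}. Half of its hypothesis, namely $\ker B\cap\ker\Q=\{0\}$, is directly given by~\eqref{eq:equation_2}. The remaining task is to exhibit $\omega\in\R$ with $i\omega\notin\sigma(JQ)$ such that $B\ker\Q\cap(JQ-i\omega I)\ker RQ=\{0\}$. My plan is an asymptotic argument letting $\omega\to\infty$. Since $\ker B\cap\ker\Q=\{0\}$ and $\ker\Q$ is finite-dimensional, $B$ is bounded below on $\ker\Q$ by some $c>0$. If the desired identity failed along some sequence $\omega_k\to\infty$ with $i\omega_k\notin\sigma(JQ)$, one could pick $u_k\in\ker\Q$ and $v_k\in\ker RQ$ with $\|v_k\|=1$ and $Bu_k=(JQ-i\omega_k I)v_k$. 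The coercivity of $B$ on $\ker\Q$ yields $c\|u_k\|\le\|JQ\|+|\omega_k|$, so $u_k/\omega_k$ is bounded; passing to subsequences gives $u_k/\omega_k\to u_\infty\in\ker\Q$ and $v_k\to v_\infty\in\ker RQ$ with $\|v_\infty\|=1$. Dividing the defining identity by $\omega_k$ and taking the limit produces $Bu_\infty=-iv_\infty$, placing $v_\infty\in B\ker\Q\cap\ker RQ=\{0\}$, which contradicts $\|v_\infty\|=1$. Hence the required $\omega$ exists and Proposition~\ref{prop:final_equivalence} yields regularity of $s\calE-\calA_\Q$.

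For the failure of the converse, I would give a minimal single-input example: take $n=2$, $m=1$, $Q=I_2$, $B=(0,1)^\top$, $J=\sbmat{0}{-1}{1}{0}$, $R=\diag(1,0)$, and $\Q=0$. Then $B\ker\Q\cap\ker RQ=\im B=\linspan\{(0,1)^\top\}\neq\{0\}$, so~\eqref{eq:equation_2} fails. Yet the single-input refinement in Proposition~\ref{prop:Campbell's_mistake} reduces regularity to $\bigcap_{r=0}^{1}\ker RQ(JQ)^rB=\{0\}$, which holds since $RB=0$ while $RJB=(-1,0)^\top\neq 0$.

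The uniqueness assertion then follows from the first part together with the remark after Proposition~\ref{prop:Campbell_for_us} (which records~\cite[Theorem 7]{Camp76}): regularity of $s\calE-\calA_\Q$ is equivalent to uniqueness of optimal controls for~\eqref{eq:ultimate_port-Hamiltonian_OCP} whenever they exist, so no separate argument is needed.

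The main obstacle is the limiting step that produces $\omega$. The two conditions in~\eqref{eq:equation_2} play complementary roles: $\ker B\cap\ker\Q=\{0\}$ supplies the coercivity estimate that controls $\|u_k\|$ by $|\omega_k|$, without which no compactness on $u_k/\omega_k$ is available, while $B\ker\Q\cap\ker RQ=\{0\}$ is precisely the property that the extracted limit $v_\infty$ is forced to violate. Both assumptions are therefore essential to the argument, and this also explains structurally why only one direction of the equivalence can hold in general.
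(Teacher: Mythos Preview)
Your proof is correct. Both your argument and the paper's rest on the same underlying idea---for large $|\omega|$ the subspace $(JQ-i\omega I)\ker RQ$ is, after rescaling, a small perturbation of $\ker RQ$, so~\eqref{eq:equation_2} forces the hypothesis of Proposition~\ref{prop:final_equivalence}---but the two formalizations differ. The paper routes through Corollary~\ref{cor:rank_criterion}: condition~\eqref{eq:equation_2} says that $[BZ_\Q,Z_R]$ has full column rank $\tau=n+m-(\rk\Q+\rk RQ)$; continuity of a nonzero $\tau\times\tau$ minor under the perturbation $Z_R\mapsto(irJQ+I)Z_R$ keeps this rank for small $r>0$; rescaling then yields~\eqref{eq:condition_1} at $\omega=2/\varepsilon$. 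Your sequential compactness argument is a clean analytic alternative that bypasses Corollary~\ref{cor:rank_criterion} and works directly with Proposition~\ref{prop:final_equivalence}. Your counterexample is the paper's with the two coordinates swapped (the paper takes $B=(1,0)^\top$, $R=\diag(0,1)$ and the transposed~$J$, and verifies regularity directly via~\eqref{eq:some_kerR_imB_condition} rather than via the single-input criterion of Proposition~\ref{prop:Campbell's_mistake}). The uniqueness clause is handled the same way in both, by invoking the remark after Proposition~\ref{prop:Campbell_for_us} that records~\cite[Theorem~7]{Camp76}.
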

\begin{proof}
In the following, denote by $M_{S,T}(A)$ the minor of a matrix $A$ with row and column index sets $S$ and $T$, respectively, where $|S|=|T|$.

Let $Z_\Q\in\mathbb{K}^{m\times (m-\rk \Q)}$ and $Z_R\in\mathbb{K}^{n\times (n-\rk RQ)}$ such that $\im Z_Q = \ker \Q$ and $\im Z_R = \ker RQ$. Then assumption~\eqref{eq:equation_2} yields that $[BZ_\Q,Z_R]$ has full column rank $\tau = n+m-(\rk\Q+\rk RQ)$. Hence, there exists a non-zero minor $M_{S,T}([BZ_\Q,Z_R])$ of $[BZ_\Q,Z_R]$ with $|S|=|T|=\tau$ . By continuity, there exists $\varepsilon>0$ such that $M_{J,K}([BZ_\Q,(irJQ+I)Z_R])$ does not vanish for $r\in (0,\varepsilon)$. Hence,
\[
\tau = \rk [BZ_\Q,(i\tfrac{\varepsilon}2JQ+I)Z_R] = \rk\big[BZ_\Q,(JQ-i\tfrac{2}{\varepsilon}I)Z_R\big] = \dim(B\ker S + (JQ-i\tfrac{2}{\varepsilon}I)\ker RQ),
\]
which is \eqref{eq:condition_1} with $\omega = \tfrac{2}{\varepsilon}$.
%Let $M_1,\ldots,M_q$, $q\in \mathbb{N}$, be all minors of order $\tau$ and consider the (univariate) polynomial 
%\begin{align*}
%p:\mathbb{R}\to\mathbb{C},\quad r\mapsto\sum_{j = 1}^q M_j([BZ_\Q,(irJQ+1)Z_R])^2.
%\end{align*}
%Then $p(0)\neq 0$ and by continuity, $p$ does not vanish in a neighbourhood $U\subseteq\mathbb{R}$ of zero. Recall the well-known connection between minors and the rank of matrices (see the German reference~\cite[Sect.\,3.3.6]{LinAlg}; an English reference, that shows necessity, only, is~\cite[Theorem 4.6.1]{BarkSchn73}): all $M_i([BZ_\Q,(irJQ+1)Z_R])$ (or. equivalently, $p(r)$) vanish at $r\in\mathbb{R}$ if, and only if, the matrix $[BZ_\Q,(irJQ+1)Z_R]$ does not have full column rank. Then we have
%\begin{align*}
%\forall r\in U\setminus\set{0}: \tau = \rk [BZ_\Q,(irJQ+1)Z_R] = \rk\big[BZ_\Q,(JQ-i\tfrac{1}{r})Z_R\big].
%\end{align*}
%For $r\in U\setminus\set{0}$ sufficiently small, $i\tfrac{1}{r}\notin\sigma(JQ)$ and thus~\eqref{eq:condition_1} holds true. By Corollary~\ref{cor:rank_criterion}, we conclude that $s\mathcal{E}-\mathcal{A}_\Q$ is regular.

To show that the converse is, in general, not true, consider $n = 2$ and $m = 1$. Let $\Q = 0$ and let $B = \sbvec 10$, which certainly fulfills $\ker B\cap\ker S = \set{0}$. Further, set
\begin{align*}
J = \begin{bmatrix}
0 & 1\\
-1 & 0
\end{bmatrix},\qquad
R = \begin{bmatrix}
0 & 0\\
0 & 1
\end{bmatrix},
\qquad\text{and}\qquad 
Q = I.
\end{align*}
Then, $\ker R = \im B\neq \set{0}$ and \eqref{eq:equation_2} is violated. For all $z\in\mathbb{R}$ and $v = v_1\sbvec 10\in\ker R$ we have 
\begin{align*}
-v_1\bvec{i\omega}{1} = (J-i\omega I)v\in\im B
\end{align*}
if, and only if, $v = 0$ and hence we conclude that~\eqref{eq:some_kerR_imB_condition} holds. Proposition~\ref{prop:final_equivalence} yields that $s\mathcal{E}-\mathcal{A}_\Q$ is regular.
\end{proof}
The previous result included a condition which is sufficient but not necessary for regularity. In the following we show that it is indeed necessary for regularity with an additional condition on the index.

%%%%%%%%%%%%%%%%%%%%%%%%%%%%%%%%%%%%%%%%%%%%%%%%%%%%%%%%%%%%%%%%%%%%%%%%%%%%%%%%%%%%%%%%%%%%%%%%%%%%%%%%%%%%%
\subsection{Index of the optimality DAE \eqref{eq:the_optimality_condition}}\label{subsec:index}
%%%%%%%%%%%%%%%%%%%%%%%%%%%%%%%%%%%%%%%%%%%%%%%%%%%%%%%%%%%%%%%%%%%%%%%%%%%%%%%%%%%%%%%%%%%%%%%%%%%%%%%%%%%%%
%\TF{Which optimality system are we referring to? Provide an equation number? } 
An important property for the numerical solution of the optimality boundary value problem~\eqref{eq:the_optimality_condition} is the %size of the 
\emph{Kronecker index}, i.e. the size of the largest block associated with the eigenvalue $\infty$ in the Kronecker canonical form \cite{TheorMatr} of the pencil $s\calE  - \calA_S$. The next proposition characterizes condition \eqref{eq:equation_2} in the case $S=0$ in terms of the Kronecker index.

\begin{Prop}\label{prop:indexthree}
Let $(J,R,Q,B)\in\PH$,
%where $Q$ is positive definite 
and $S = 0_{m\times m}$. Then
\begin{equation}\label{e:spfwm2}
\im B\cap\ker RQ = \{0\} = \ker B
\end{equation}
holds if, and only if, the pencil $s\calE  - \calA_S$ is regular with Kronecker index three.
\end{Prop}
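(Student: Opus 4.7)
The plan is to characterize the Kronecker index of the optimality pencil via the maximal length of a Jordan chain at $\infty$, i.e., the largest $\nu$ for which there exist vectors $v_1,\ldots,v_\nu$ with $v_1\neq 0$ satisfying $\calE v_1 = 0$ and $\calE v_k = \calA_0 v_{k-1}$ for $k = 2,\ldots,\nu$. This quantity equals the nilpotency index of the nilpotent block $N$ in the Weierstrass canonical form, hence the Kronecker index of the pencil.

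For the forward implication, assume \eqref{e:spfwm2}. Since this condition coincides with~\eqref{eq:equation_2} specialized to $S = 0_{m\times m}$ (because $\ker 0 = \K^m$), Proposition~\ref{prop:Schaller_Philipp_für_uns} immediately yields regularity. Writing $v = (\lambda,x,u)^\top$, the condition $\calE v_1 = 0$ forces $v_1 = (0,0,u_1)^\top$, so $\calA_0 v_1 = (Bu_1,0,0)^\top$ and we may take $v_2 = (0,Bu_1,0)^\top$. Next, $\calA_0 v_2 = ((J-R)QBu_1,\,QRQBu_1,\,0)^\top$, and $v_3 = (-QRQBu_1,\,(J-R)QBu_1,\,0)^\top$ satisfies $\calE v_3 = \calA_0 v_2$. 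For $u_1 \neq 0$ this produces a Jordan chain of length three, so the Kronecker index is at least three.

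The obstruction to extending to length four is that $\calE v_4 = \calA_0 v_3$ requires the third block of $\calA_0 v_3$, namely $B^H\lambda_3 = -B^H QRQ Bu_1$, to vanish. Since $QRQ\geq 0$ and $\ker R^{1/2} = \ker R$, the kernel of $B^H QRQ B$ satisfies
\begin{align*}
\ker(B^H QRQ B) \;=\; \{u\,:\, \|R^{1/2}QBu\|^2 = 0\} \;=\; \{u\,:\, Bu \in \ker RQ\} \;=\; B^{-1}\ker RQ,
\end{align*}
which equals $\{0\}$ precisely when $\ker B = \{0\}$ and $\im B \cap \ker RQ = \{0\}$, i.e., exactly under~\eqref{e:spfwm2}. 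Moreover, the free parameters $u_2,u_3$ appearing in $v_2,v_3$ influence only $x_2, x_3$ through $B$, leaving $\lambda_3 = -QRQ Bu_1$ untouched, so the obstruction is intrinsic to $u_1$. Hence, under~\eqref{e:spfwm2} no such extension exists and the Kronecker index equals three.

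For the converse, suppose the pencil is regular with Kronecker index three, so no Jordan chain at $\infty$ has length four. If \eqref{e:spfwm2} failed, the identification of $\ker(B^H QRQ B)$ above would give $u_1 \neq 0$ with $B^H QRQ Bu_1 = 0$; one could then solve the first two block rows of $\calE v_4 = \calA_0 v_3$ for $\lambda_4, x_4$ (the third row being automatic) and extend the chain to length four, a contradiction. The main delicate step throughout is the algebraic identification $\ker(B^H QRQ B) = B^{-1}\ker RQ$, together with the observation that the freedom in $u_2,u_3$ does not affect $\lambda_3$ and therefore cannot evade the obstruction to lengthening the chain.
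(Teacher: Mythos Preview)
Your proof is correct and takes a genuinely different route from the paper's. The paper computes the increasing Wong sequence $\mathcal W_k = \calE^{-1}\calA_0\mathcal W_{k-1}$ and shows that $\mathcal W_3 = \mathcal W_4$ is equivalent to $\im B\cap\ker RQ = \{0\}$. That equivalence, however, is not purely algebraic: the paper invokes Proposition~\ref{prop:final_equivalence} to obtain some $\omega$ with $\im B\cap(JQ\pm i\omega I)\ker RQ = \{0\}$, and these two auxiliary conditions are what ultimately force a given $x\in\im B\cap\ker RQ$ to vanish. Your Jordan-chain argument avoids this detour entirely: the single observation $\ker(B^HQRQB) = B^{-1}\ker RQ$, obtained from the semidefiniteness of $QRQ$, already identifies exactly when a length-three chain can be extended, and you correctly note that the free parameters $u_2,u_3$ only alter the $x$-components of later chain vectors and hence cannot change $\lambda_3 = -QRQBu_1$. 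The result is a shorter, self-contained argument that does not rely on the resolvent-based regularity characterizations developed earlier in the paper. What the paper's Wong-sequence computation buys in exchange is explicit descriptions of the subspaces $\mathcal W_k$, which carry additional structural information about the DAE beyond the index.
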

\begin{proof}
First of all, since $S=0$ we have that \eqref{e:spfwm2} is equivalent to \eqref{eq:equation_2}. Hence, by Proposition~\ref{prop:Schaller_Philipp_für_uns}, the pencil is regular in both logical directions of the proof. Therefore, we may assume that the pencil is regular. By Proposition~\ref{prop:final_equivalence}, there exists $\omega\in\R$ with $i\omega\notin\sigma(JQ)\cap\sigma(-JQ)$ such that
$$
\im B\cap(JQ-i\omega I)\ker RQ = \im B\cap(JQ+i\omega I)\ker RQ = \{0\} = \ker B.
$$
We now have to prove that $V := \im B\cap\ker RQ = \{0\}$ if, and only if, the pencil is of index three. We set $A := (J-R)Q$.

One easily computes the first four elements of the increasing \emph{Wong sequence} (see~\cite[Definition 2.1]{BergIlchTren12} or \cite{Wong74}), defined by $\mathcal{W}_0 = \{0\}$ and $\mathcal{W}_{k+1} = \calE^{-1}\calA_S \mathcal{W}_k$ for $k\in\mathbb{N}_0$, as $\mathcal{W}_1 = \{0\}\times\{0\}\times\K^m$, $\mathcal{W}_2 = \{0\}\times\mathrm{im}\, B\times\K^m$,
\begin{align*}
\mathcal{W}_3 & = \left\{\bvec{-QRQv}{Av+w} : v,w\in\mathrm{im}\, B\right\}\times\K^m\\
\mathcal{W}_4 & = \left\{\bvec{-QRQJQv-QRQw}{A^2v+Aw+y} : v\in V,\,w,y\in\mathrm{im}\, B\right\}\times\K^m.
\end{align*}
Note that here $\mathcal E^{-1}$ denotes the preimage of the map defined by $\mathcal E$ and not the inverse of $\mathcal E$.
Hence, $\mathcal{W}_0\neq \mathcal{W}_1$ and $\mathcal{W}_1\neq \mathcal{W}_2$. Moreover, $\mathcal{W}_2\neq \mathcal{W}_3$, since $\mathcal{W}_2 = \mathcal{W}_3$ is equivalent to $\im B\subset\ker RQ$ and $JQ\,\im B\subset\im B$, which leads to
$$
(JQ-i\omega I)\im B\subset\im B\cap (JQ-i\omega I)\ker R = \{0\},
$$
i.e., $B=0$, contradicting $\ker B=\{0\}$. From~\cite[Proposition 2.10]{BergIlchTren12}, we conclude that the Kronecker index of the pencil index is at least three, and that the index is  three if, and only if, $\mathcal{W}_3 = \mathcal{W}_4$.

Now, $\mathcal{W}_3=\mathcal{W}_4$ is easily seen to be equivalent to the statement that for each $x\in V$ there exist $v,w\in\im B$ such that $RQJQx = RQv$ and $JQJQx = JQv + w$. Hence, $V=\{0\}$ obviously implies $\mathcal{W}_3 = \mathcal{W}_4$. Conversely, assume that $\mathcal{W}_3 = \mathcal{W}_4$ and let $x\in V$.
%Then there exist $v,w\in\im B$ such that $QRQJx = QRQv$ and $(J-R)QJQx = (J-R)Qv + w$. Since \JK{$\ker RQ = \ker QRQ$}, $QRQx=0$ and thus $QRQ[(JQ+i\omega I)x - v]=0$, we have
%\begin{align*}
%((J-R)Q-i\omega I)[(JQ+i\omega I)x - v] & = ((J-R)QJQ+\omega^2 I)x - ((J-R)Q-i\omega I)v\\
%& = w + \omega^2x + i\omega v\in\im B\cap ((J-R)Q-i\omega I)\ker R = \{0\},
%\end{align*}
%and hence
%$$
%(JQ+i\omega I)x = v\in \im B\cap ((J-R)Q+i\omega I)\ker R = \{0\}.
%$$
%Thus, $x=0$ follows. This shows that $V = \{0\}$.
Then there exist $v,w\in\im B$ such that $RQJQx = RQv$ and $(JQ)^2x = JQv + w$. Since $RQx=0$ and thus $RQ[(JQ+i\omega I)x - v]=0$, we have
\begin{align*}
(JQ-i\omega I)[(JQ+i\omega I)x - v]
&= ((JQ)^2+\omega^2 I)x - (JQ-i\omega I)v\\
&= w + \omega^2x + i\omega v\in\im B\cap (JQ-i\omega I)\ker RQ = \{0\},
\end{align*}
and hence
$$
(JQ+i\omega I)x = v\in \im B\cap (JQ+i\omega I)\ker RQ = \{0\}.
$$
Thus, $x=0$ follows which  shows that $V = \{0\}$.
\end{proof}

%\MS{Mention before}
We summarize our results and we illustrate further equivalences and implications in Figure \ref{fig:Schaubild_2}.
\newpage
%dependencies which we have studied in the previous lemmata and propositions.

\ \vspace*{2cm}
\begin{figure}[H]
\centering
 \resizebox{.8\textwidth}{!}{\begin{tikzpicture}[node distance = 2cm, auto, paths/.style={->, thick, >=stealth'}, decoration={markings,
	mark=between positions 0 and 1 step 6pt
	with { \draw [fill] (0,0) circle [radius=1pt];}}]
% Place nodes
\hspace{0.8cm} 
\node [blocktest] (pert) {$s\begin{bmatrix}0& I&0 \\-I&0& 0\\0& 0&0\end{bmatrix} - \begin{bmatrix}0 & (J-R)Q & B\\((J-R)Q)^H & QRQ & 0\\ B^H & 0 & \Q \end{bmatrix}$ is regular};
\node [block7,below of=pert, node distance=3.5cm] (cent) {\centering There exists $\mu\in\mathbb{C}$ such that\\[.1cm]$\Q - \begin{bmatrix}B^H & 0\end{bmatrix} \begin{bmatrix}
0 & \mu I-(J-R)Q \\
 -\mu I -((J-R)Q)^H & QRQ
\end{bmatrix}^{-1}\begin{bmatrix}
B\\ 0
\end{bmatrix}$ is invertible
};
\node [block7,below of=cent, node distance=3.5cm] (hier) {\centering There exists $\omega\in\mathbb{R}$ such that\\[.1cm] $ i\omega\notin\sigma((J-R)Q)$ and $\ker\Q\,\cap\ker RQ((J-R)Q-i\omega)^{-1}B = \set{0}$};
\node [block3,below of=hier, node distance=3.5cm] (dort) {\centering  There exists $\omega\in\mathbb{R}$ such that\\[.1cm] $i\omega\notin\sigma(JQ)$ and $\ker \Q\,\cap\ker RQ(JQ-i\omega I)^{-1}B = \set{0}$};
\node [block7,below of=dort, node distance=3.5cm] (dortter) {\centering  There exists $\omega\in\mathbb{R}$ such that\\[.1cm] $ i\omega\notin\sigma(JQ)$ and $ B \ker \Q \cap (JQ-i\omega I)\ker RQ = \set{0} = \ker B\cap\ker \Q$};
\node [block3,below of=dortter, node distance=3.5cm] (dortt) {\centering  There exists $\omega\in\mathbb{R}$ such that\\[.1cm]$ i\omega\notin\sigma(JQ)$ and $\ker\Q \cap B^{-1}(JQ-i\omega I)\ker RQ = \set{0}$};
\node [below of = dortt, node distance = 3.5cm] (centt) {};
\node [block4, right of = centt, node distance = 4.5cm] (rechts) {$\ker \Q\ \cap \bigcap\limits_{r = 0}^n \ker RQ(JQ)^rB = \set{0}$};
\node [block5, left of = centt, node distance = 4.5cm] (links) {$B\ker \Q\cap\ker RQ = \set{0} = \ker B \cap \ker \Q$};

\draw [double,<->] (pert) --  node [] {Proposition~\ref{prop:Campbell_for_us}\,(ii) and \cite[Proposition 1]{Camp76}}(cent);
\draw [double,<->] (cent) --  node [] {Proposition~\ref{prop:Campbell_for_us}\,(iii) and \cite[Theorem 5]{Camp76}}(hier);
\draw [double,<->] (hier) --  node [] {Proposition~\ref{prop:Campbell_for_us}\,(iv)}(dort);
\draw [double,<->] (dort) --  node [] {Proposition~\ref{prop:final_equivalence}}(dortter);
\draw [double,<->] (dortter) --  node [] {Proposition~\ref{prop:Campbell_for_us}\,(v)}(dortt);
\begin{scope}[transform canvas = {xshift = -0.5cm}]
\draw [double,<-] (dortt) -- node [] {Proposition~\ref{prop:Schaller_Philipp_für_uns}} (links);
\end{scope}
\begin{scope}[transform canvas = {xshift = +1.5cm}]
\draw [double,->] (dortt) -- node [] {Proposition~\ref{prop:Campbell's_mistake}} (rechts);
\end{scope}
\begin{scope}[transform canvas = {xshift = 0.5cm}]
\draw [double,<-] (dortt) -- node [anchor=east] {} coordinate (m0) (rechts);
\draw [black] ($(m0)+(0.1,0.15)$) -- ($(m0)+(-0.1,-0.15)$);
\end{scope}
\begin{scope}[transform canvas = {xshift = -1.5cm}]
\draw [double,->] (dortt) -- node [anchor=east] {} coordinate (m) (links);
\draw [black] ($(m)+(0.1,-0.15)$) -- ($(m)+(-0.1,0.15)$);
\end{scope}
\end{tikzpicture}}
\caption{Equivalent, sufficient and necessary conditions for for regularity of the optimality DAE~\eqref{eq:the_optimality_condition}.}
\label{fig:Schaubild_2}
\end{figure}
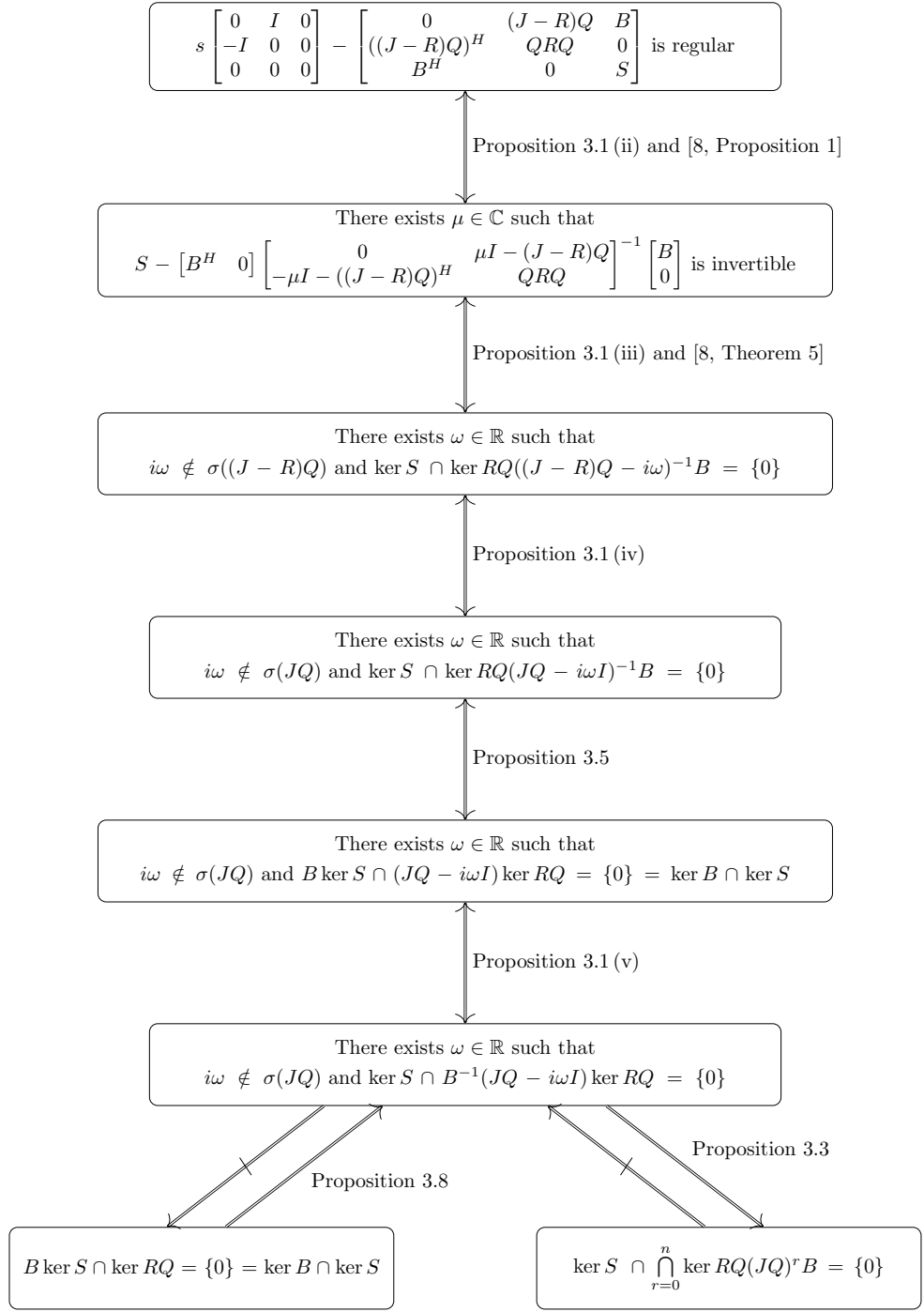

\newpage

%\color{black}

%%%%%%%%%%%%%%%%%%%%%%%%%%%%%%%%%%%%%%%%%%%%%%%%%%%%%%%%%%%%%%%%%%%%%%%%%%%%%%%%%%%%%%%%%%%%%%%%%%%%%%%%%%%%%
\section{Existence of an optimal trajectory}\label{sec:existence}
%%%%%%%%%%%%%%%%%%%%%%%%%%%%%%%%%%%%%%%%%%%%%%%%%%%%%%%%%%%%%%%%%%%%%%%%%%%%%%%%%%%%%%%%%%%%%%%%%%%%%%%%%%%%%

In our considerations so far, we have only focused on the regularity of the pencil associated with the necessary and sufficient optimality conditions of the port-Hamiltonian optimal control problem~\eqref{eq:perturbed_port-Hamiltonian_OCP}. Using the general solution formula that we presented in Proposition~\ref{prop:representation_solution}, we can derive a solution formula for~\eqref{eq:the_optimality_DAE} along the lines of \cite[pp.\,1096]{Camp76}.

Recall the Drazin inverse from Definition~\ref{def:drazin} and note that $M^DM = MM^D$ coincides with the spectral projection onto the direct sum of the generalized eigenspaces (root subspaces) of $M$ corresponding to its non-zero eigenvalues.

\begin{Prop}\label{prop:initial_conditions}
Let $(J,R,Q,B)\in\PH$ and $\Q\in\mathbb{K}^{m\times m}$ with $\Q^H = \Q\geq 0$ be such that $s\mathcal{E}-\mathcal{A}_\Q$ is regular. Choose $\mu\in\mathbb{K}\setminus(\sigma((J-R)Q)\cup \sigma(-((J-R)Q)^H))$ such that $\mu\mathcal{E}-\mathcal{A}_\Q$ is invertible, define $\Q_\mu$ as in~\eqref{eq:Campbell's_qmu} and
\begin{align}
E_\mu & := \begin{bmatrix}
0 & (J-R)Q - \mu I \\
((J-R)Q)^H+\mu I & QRQ
\end{bmatrix}^{-1}
% Volker:
%E_\mu & := \begin{bmatrix}
%0 & \mu I-(J-R)Q \\
% -\mu I -((J-R)Q)^H & QRQ
%\end{bmatrix}^{-1}
% Jonas:
%\begin{bmatrix}
%\mu-Q(J+R) & QRQ\\
%0 & \mu-(J-R)Q
%\end{bmatrix}^{-1}
,\nonumber\\
N_\mu & := \left(E_\mu + E_\mu\begin{bmatrix}
B\\ 0
\end{bmatrix}\Q_\mu^{-1}\begin{bmatrix}B^H &0\end{bmatrix}E_{\mu}\right)\bmat 0{-I}I0,\label{eq:Nmu}\\
M_\mu & := \Q_\mu^{-1}\begin{bmatrix}B^H &0\end{bmatrix}E_\mu\bmat 0{-I}I0.\label{eq:Mmu}
\end{align}
A function $\begin{bmatrix}\lambda^T & x^T & u^T\end{bmatrix}^T\in\mathcal{C}^1([t_0,t_1],\mathbb{K}^{n+n+m})$ is a solution of~\eqref{eq:the_optimality_DAE} if, and only if, $\begin{bmatrix}
   \la(t_0)\\ x(t_0)) 
\end{bmatrix}\in\im(N_\mu^DN_\mu)$ and
\begin{equation}\label{eq:unsere_lösungen}
\begin{aligned}
\begin{bmatrix}
\lambda\\x
\end{bmatrix}
& = \exp{-[N_\mu^D(I-\mu N_\mu)](\,\cdot\,-t_0)}
\begin{bmatrix}
\lambda(t_0)\\
x(t_0)
\end{bmatrix},\\
u & = M_\mu N_\mu^D\begin{bmatrix}
\lambda\\x
\end{bmatrix}.
\end{aligned}
\end{equation}
\end{Prop}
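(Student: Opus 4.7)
The plan is to reduce~\eqref{eq:the_optimality_DAE} to the commuting-coefficient form required by Proposition~\ref{prop:representation_solution}. Since $s\mathcal{E}-\mathcal{A}_S$ is regular and $\mu$ avoids the finite set at which $\mu\mathcal{E}-\mathcal{A}_S$ fails to be invertible, we multiply the DAE from the left by $(\mu\mathcal{E}-\mathcal{A}_S)^{-1}$ and obtain $\widetilde E \tfrac{\mathrm d}{\mathrm dt}z = \widetilde A z$ with $\widetilde E := (\mu\mathcal{E}-\mathcal{A}_S)^{-1}\mathcal{E}$ and $\widetilde A := (\mu\mathcal{E}-\mathcal{A}_S)^{-1}\mathcal{A}_S$, where $z = [\lambda^T, x^T, u^T]^T$. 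By Lemma~\ref{lem:commute}, $\widetilde E$ and $\widetilde A$ commute, and the elementary identity $\widetilde A = \mu \widetilde E - I$ will be crucial later.

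Next I would compute $(\mu\mathcal{E}-\mathcal{A}_S)^{-1}$ explicitly via the Schur complement with respect to the leading $2n\times 2n$ block: that block is invertible with inverse $-E_\mu$ precisely because $\mu\notin\sigma((J-R)Q)\cup\sigma(-((J-R)Q)^H)$, and the trailing Schur complement coincides (up to sign) with the matrix $S_\mu$ from~\eqref{eq:Campbell's_qmu}, which is invertible by Proposition~\ref{prop:Campbell_for_us}(ii). Multiplying by $\mathcal{E}$, whose $u$-columns vanish and whose first $2n$ columns equal $-\bmat 0{-I}I 0$, produces a block matrix of the form
\begin{equation*}
\widetilde E = \begin{bmatrix} N_\mu & 0 \\ M_\mu & 0 \end{bmatrix},
\end{equation*}
with $N_\mu$ and $M_\mu$ as defined in~\eqref{eq:Nmu}--\eqref{eq:Mmu}. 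Using $\widetilde A = \mu\widetilde E - I$, the first $2n$ rows of the reduced system decouple into $N_\mu \tfrac{\mathrm d}{\mathrm dt}\sbvec{\lambda}{x} = (\mu N_\mu - I)\sbvec{\lambda}{x}$, while the last $m$ rows give the algebraic relation $u = \mu M_\mu \sbvec{\lambda}{x} - M_\mu \tfrac{\mathrm d}{\mathrm dt}\sbvec{\lambda}{x}$.

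The decoupled differential part has commuting coefficients and $\ker N_\mu \cap \ker(\mu N_\mu - I) = \{0\}$ trivially. Proposition~\ref{prop:representation_solution} therefore applies and yields that $\mathcal{C}^1$-solutions of this subsystem are exactly those with $\sbvec{\lambda(t_0)}{x(t_0)}\in\im(N_\mu^D N_\mu)$, propagated by $\exp(N_\mu^D(\mu N_\mu - I)(\cdot - t_0)) = \exp(-N_\mu^D(I-\mu N_\mu)(\cdot - t_0))$, which matches the first identity in~\eqref{eq:unsere_lösungen}. Substituting $\tfrac{\mathrm d}{\mathrm dt}\sbvec{\lambda}{x} = N_\mu^D(\mu N_\mu - I)\sbvec{\lambda}{x}$ into the algebraic relation gives $u = \mu M_\mu(I - N_\mu^D N_\mu)\sbvec{\lambda}{x} + M_\mu N_\mu^D\sbvec{\lambda}{x}$. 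Since $N_\mu^D N_\mu$ is the spectral projection onto $\im(N_\mu^D N_\mu)$ and commutes with the propagator (both being polynomials in $N_\mu$), the condition $\sbvec{\lambda(t_0)}{x(t_0)}\in\im(N_\mu^D N_\mu)$ is preserved for all $t\in[t_0,t_1]$, so $(I - N_\mu^D N_\mu)\sbvec{\lambda(t)}{x(t)} = 0$ and the control formula reduces to $u = M_\mu N_\mu^D\sbvec{\lambda}{x}$.

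The converse direction is verified by reversing this computation: given the stated formulas, one checks that both block rows of the premultiplied DAE hold, which is equivalent to~\eqref{eq:the_optimality_DAE} by invertibility of $\mu\mathcal{E}-\mathcal{A}_S$. The main obstacle is the bookkeeping in identifying $N_\mu$ and $M_\mu$ from the block-Schur-complement formula for $(\mu\mathcal{E}-\mathcal{A}_S)^{-1}\mathcal{E}$, in particular tracking the signs arising from $-E_\mu$ being the inverse of the leading block and from the symplectic factor $\bmat 0{-I}I 0$ on the right; once the identification is made, the rest is a direct application of Proposition~\ref{prop:representation_solution} combined with the spectral-projection property of the Drazin inverse.
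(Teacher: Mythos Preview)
Your proposal is correct and follows essentially the same approach as the paper: the paper's proof defers the block-Schur-complement computation of $(\mu\mathcal{E}-\mathcal{A}_S)^{-1}\mathcal{E}$ to \cite[pp.\,1096]{Camp76} and then invokes Proposition~\ref{prop:representation_solution} together with the spectral-projection property of $N_\mu^D N_\mu$, whereas you spell out that computation explicitly before applying the same two ingredients. The only cosmetic difference is that the paper first records the solution formula with an auxiliary vector $\sbvec{v_\lambda}{v_x}$ and then argues it can be replaced by $\sbvec{\lambda(t_0)}{x(t_0)}$, while your direct use of Proposition~\ref{prop:representation_solution} avoids that intermediate step.
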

\begin{proof}
In view of the calculations in~\cite[pp.\,1096]{Camp76} and Proposition~\ref{prop:representation_solution},  $\begin{bmatrix}\lambda^T & x^T & u^T\end{bmatrix}^T$ is a solution of~\eqref{eq:the_optimality_DAE} if, and only if, there are $v_\lambda,v_x\in\mathbb{K}^n$ so that
\begin{align}
\begin{split}\label{eq:Campy}
\begin{bmatrix}
\lambda\\x
\end{bmatrix} & = \exp{-[N_\mu^D(I-\mu N_\mu)](\cdot-t_0)}N_\mu^DN_\mu\begin{bmatrix}
v_\lambda\\
v_x
\end{bmatrix},\\
u & = M_\mu N_\mu^D\begin{bmatrix}
\lambda\\x
\end{bmatrix}.
\end{split}
\end{align}
 Note that $N_\mu^DN_\mu$ is the spectral projection onto the sum of the root subspaces of $N_\mu$ corresponding to the non-zero eigenvalues.
Therefore, every function with $\begin{bmatrix}
   \la(t_0)\\ x(t_0)) 
\end{bmatrix}\in\im(N_\mu^DN_\mu)$
%$(\la(t_0),x(t_0))\in\im(N_%\mu^DN_\mu)$ 
that has the representation~\eqref{eq:unsere_lösungen} is indeed a solution of~\eqref{eq:the_optimality_DAE}. Conversely, let $\begin{bmatrix}\lambda^T & x^T & u^T\end{bmatrix}^T\in\mathcal{C}^1([t_0,t_1],\mathbb{K}^{n+n+m})$ be a solution of~\eqref{eq:the_optimality_DAE} and let $v_\lambda,v_x\in\mathbb{R}$ be such that \eqref{eq:Campy} holds.
%\begin{equation}\label{eq:solutions_with_initial_conditions}
%\begin{aligned}
%\begin{bmatrix}
%\lambda\\x
%\end{bmatrix} & = \exp{-[N_\mu^D(1-\mu N_\mu)](t-t_0)}N_\mu^DN_\mu\begin{bmatrix}
%v_\lambda\\
%v_x
%\end{bmatrix}.
%\end{aligned}
%\end{equation}
Then $\begin{bmatrix}\lambda^T(t_0) \\ x^T(t_0) \end{bmatrix} = N_\mu^DN_\mu\begin{bmatrix} v_\la\\ v_x\end{bmatrix}\in\im(N_\mu^D N_\mu)$, which proves the assertion.
%\begin{align*}
%\begin{bmatrix}
%\lambda(t_0)\\
%x(t_0)
%\end{bmatrix} & = N_\mu^DN_\mu \begin{bmatrix}
%v_\lambda\\
%v_x
%\end{bmatrix}.
%\end{align*}
%By definition of the Drazin inverse, we have $N_\mu N_{\mu}^D N_\mu = N_\mu$ and hence
%\begin{align*}
%N_\mu^DN_\mu\begin{bmatrix}
%\lambda(t_0)\\
%x(t_0)
%\end{bmatrix} = (N_\mu^DN_\mu N_\mu^D)N_\mu \begin{bmatrix}
%v_\lambda\\
%v_x
%\end{bmatrix} = N_\mu^DN_\mu \begin{bmatrix}
%v_\lambda\\
%v_x
%\end{bmatrix}.
%\end{align*}
%Thus, we have indeed
%\begin{align*}
%\begin{bmatrix}
%\lambda\\x
%\end{bmatrix} & = \exp{-[N_\mu^D(1-\mu N_\mu)](t-t_0)}N_\mu^DN_\mu\begin{bmatrix}
%\lambda(t_0)\\
%x(t_0)
%\end{bmatrix}.
%\end{align*}
%This completes the proof of the proposition.
\end{proof}

The statement of Proposition \ref{prop:initial_conditions} shows that the %(spectral) 
subspace $\im(N_\mu^DN_\mu)$ determines the set of admissible initial values for the boundary value problem \eqref{eq:the_optimality_DAE}. Since \eqref{eq:the_optimality_DAE} does not depend on the particular choice of $\mu$, the subspace $\im(N_\mu^DN_\mu)$ does not either, which is shown in the following corollary.

\begin{Cor}
Let $(J,R,Q,B)\in\PH$ and $\Q\in\mathbb{K}^{m\times m}$ with $\Q^H = \Q\geq 0$ be such that $s\mathcal{E}-\mathcal{A}_\Q$ is regular. Choose $\mu,\nu\in\mathbb{K}\setminus(\sigma((J-R)Q)\cup \sigma(Q(J+R)))$ such that both $\mu\mathcal{E}-\mathcal{A}_\Q$ and $\nu\mathcal{E}-\mathcal{A}_\Q$ are invertible. Then we have $\im(N_\mu^DN_\mu) = \im(N_\nu^D N_\nu)$.
\end{Cor}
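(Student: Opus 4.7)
The plan is to show that both $\im(N_\mu^D N_\mu)$ and $\im(N_\nu^D N_\nu)$ coincide with a single subspace that is intrinsic to the DAE~\eqref{eq:the_optimality_DAE}, namely the set of admissible initial values for its $\mathcal{C}^1$-solutions. Since the DAE itself does not involve the parameter $\mu$ or $\nu$, the equality follows immediately.

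More concretely, let us define
\begin{align*}
\mathcal{S} := \left\{ \begin{bmatrix}\lambda(t_0)\\ x(t_0)\end{bmatrix} \,\Big|\, \exists\,u\in\mathcal{C}^1([t_0,t_1],\K^m): \begin{bmatrix}\lambda^T & x^T & u^T\end{bmatrix}^T\in\mathcal{C}^1([t_0,t_1],\K^{2n+m})\text{ solves }\eqref{eq:the_optimality_DAE}\right\}.
\end{align*}
This subspace of $\K^{2n}$ is defined purely in terms of the DAE and does not depend on any auxiliary parameter.

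Applying Proposition~\ref{prop:initial_conditions} with the choice $\mu$, the characterisation of $\mathcal{C}^1$-solutions of \eqref{eq:the_optimality_DAE} immediately yields $\mathcal{S} = \im(N_\mu^D N_\mu)$: the forward implication ("every solution has $[\lambda(t_0)^T,x(t_0)^T]^T\in\im(N_\mu^D N_\mu)$") gives the inclusion $\mathcal{S}\subseteq\im(N_\mu^D N_\mu)$, and the reverse implication ("every element of $\im(N_\mu^D N_\mu)$ serves as initial value of a solution via the formula~\eqref{eq:unsere_lösungen}") gives the opposite inclusion. The very same argument applied with $\nu$ in place of $\mu$ yields $\mathcal{S} = \im(N_\nu^D N_\nu)$. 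Combining the two identities produces the claim $\im(N_\mu^D N_\mu) = \im(N_\nu^D N_\nu)$.

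There is essentially no technical obstacle here, since Proposition~\ref{prop:initial_conditions} already does all of the heavy lifting; the corollary is a direct consequence of the observation that its characterisation of initial values is $\mu$-independent on the left-hand side. If a purely algebraic proof were desired instead, one could alternatively verify directly that $\im(N_\mu^D N_\mu)$ equals the spectral projector subspace onto the finite eigenvalues of the regular pencil $s\mathcal{E}-\mathcal{A}_\Q$, which is invariant under the Möbius-type reparametrisation $\mu\mapsto\nu$ induced by Lemma~\ref{lem:commute}; however, the dynamical argument above is shorter and more transparent.
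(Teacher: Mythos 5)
Your proposal is correct and follows essentially the same reasoning as the paper, which justifies the corollary by the observation preceding it: Proposition~\ref{prop:initial_conditions} identifies $\im(N_\mu^D N_\mu)$ with the set of admissible initial values of the $\mu$-independent DAE~\eqref{eq:the_optimality_DAE}, so the subspace cannot depend on the choice of $\mu$. Your explicit two-inclusion argument via the set $\mathcal{S}$ is just a slightly more formal write-up of exactly that idea.
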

%\begin{proof}
%Let $(\lambda^0,x^0)\in\im(N_\nu^D N_\nu)$. By Proposition~\ref{prop:initial_conditions}, there is a solution $(\lambda,x,u)$ of~\eqref{eq:the_optimality_DAE} with $\lambda(t_0) = \lambda^0$ and $x(t_0) = x^0$. By Proposition~\ref{lem:initial_conditions}, this solution admits the representation
%\begin{align*}
%\begin{bmatrix}
%\lambda\\x
%\end{bmatrix} & = \exp{-[N_\mu^D(1-\mu N_\mu)](t-t_0)}N_\mu^DN_\mu\begin{bmatrix}
%\lambda(t_0)\\
%x(t_0)
%\end{bmatrix}\\
%u & = M_\mu N_\mu^D\begin{bmatrix}
%\lambda\\x
%\end{bmatrix}.
%\end{align*}
%Evaluation at $t_0$ yields
%\begin{align*}
%\begin{bmatrix}
%\lambda^0\\
%x^0
%\end{bmatrix} = \begin{bmatrix}
%\lambda(t_0)\\
%x(t_0)
%\end{bmatrix} = \mu^DN_\mu\begin{bmatrix}
%\lambda^0\\
%x^0
%\end{bmatrix}\in\im N_\mu^DN_\mu.
%\end{align*}
%Therefore, we conclude $\im N_\nu^D N_\nu\subseteq \im N_\mu^D N_\mu$; the converse inclusion follows by exchanging $\mu$ and $\nu$.
%\end{proof}

%Using the solution formula given in Proposition~\ref{lem:initial_conditions}, Campbell obtains in~\cite[Theorem 4]{Camp76} the following criterion for the existence of an optimal solution. Unfortunately, the authors do not know (mostly due to the convoluted nature of the rather complicated matrix $N_\mu$ and its Drazin inverse) how to exploit the port-Hamiltonian structure. Therefore, we only recall the general result and illustrate this with some examples.

The following proposition finally characterizes the existence of a solution of the optimality system with prescribed initial and terminal values for the state, cf.\ \cite[Theorem 4]{Camp76}.

\begin{Prop}\label{prop:existence}
Let $(J,R,Q,B)\in\PH$ and $\Q\in\mathbb{K}^{m\times m}$ with $\Q^H = \Q\geq 0$. Let $\mu\in\mathbb{K}\setminus(\sigma((J-R)Q)\cup \sigma(-(J-RQ)^H))$ be such that $\mu\mathcal{E}-\mathcal{A}_\Q$ is invertible and let $E_1,E_2,E_3,E_4:\mathbb{R}\to\mathbb{K}^{n\times n}$ be such that for all $t\in\mathbb{R}$
\begin{align}\label{eq:drazindecomp}
 \exp{-[N_\mu^D(I-\mu N_\mu)](t-t_0)}
%N_\mu^DN_\mu 
= \begin{bmatrix}
E_1(t) & E_2(t)\\
E_3(t) & E_4(t)
\end{bmatrix}.
\end{align}
If $E_3(t_1)$ is invertible, then~\eqref{eq:the_optimality_DAE} possesses a solution $\begin{bmatrix}\lambda^T & x^T & u^T\end{bmatrix}^T\in\mathcal{C}^1([t_0,t_1],\mathbb{K}^{n+n+m})$ with $x(t_0) = x^0$ and $x(t_1) = x^1$ if, and only if,
\begin{align}\label{e:x0x1_condi}
\begin{bmatrix}E_3(t_1)^{-1}[x^1 - E_4(t_1)x^0]\\ x^0
\end{bmatrix}\,\in\,\im(N_\mu^DN_\mu).
\end{align}
%\begin{align}\label{eq:existence_solution}
%x^1 = E_3^\mu(t_1)\lambda(t_0) + E_4^\mu(t_1)x^0.
%\end{align}
%The associated Lagrangian multiplier is unique if, and only if, $E_3(t_1)$ is invertible. In this case, there is only one optimal state and associated control that permits $\lambda_0\neq 0$ in the Pontryagin Maximum Principle.
\end{Prop}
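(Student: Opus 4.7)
The plan is to reduce the claim to Proposition~\ref{prop:initial_conditions} by solving for $\lambda(t_0)$ from the terminal condition $x(t_1)=x^1$. More precisely, by Proposition~\ref{prop:initial_conditions}, a continuously differentiable triple $[\lambda^T,x^T,u^T]^T$ solves~\eqref{eq:the_optimality_DAE} if, and only if,
\begin{equation*}
\begin{bmatrix}\lambda(t_0)\\ x(t_0)\end{bmatrix}\in\im(N_\mu^DN_\mu)
\quad\text{and}\quad
\begin{bmatrix}\lambda(t)\\ x(t)\end{bmatrix}
= \exp{-[N_\mu^D(I-\mu N_\mu)](t-t_0)}\begin{bmatrix}\lambda(t_0)\\ x(t_0)\end{bmatrix},
\end{equation*}
with $u$ then determined by the second identity in~\eqref{eq:unsere_lösungen}. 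Hence I only need to parametrise the admissible pairs $(\lambda(t_0),x(t_0))$ that additionally satisfy the two-point boundary conditions $x(t_0)=x^0$ and $x(t_1)=x^1$.

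Using the block decomposition~\eqref{eq:drazindecomp}, the lower block of the above exponential identity evaluated at $t=t_1$ reads
\begin{equation*}
x(t_1) = E_3(t_1)\lambda(t_0) + E_4(t_1)x(t_0).
\end{equation*}
Imposing $x(t_0)=x^0$ and $x(t_1)=x^1$, and using invertibility of $E_3(t_1)$, this is equivalent to
\begin{equation*}
\lambda(t_0) = E_3(t_1)^{-1}\bigl[x^1 - E_4(t_1)x^0\bigr].
\end{equation*}
Thus the boundary value problem~\eqref{eq:the_optimality_DAE} with $x(t_0)=x^0$, $x(t_1)=x^1$ admits a $\mathcal{C}^1$-solution if, and only if, this uniquely determined initial pair $(\lambda(t_0),x(t_0))$ also lies in the admissible set $\im(N_\mu^DN_\mu)$ of Proposition~\ref{prop:initial_conditions}. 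This is precisely condition~\eqref{e:x0x1_condi}.

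For the forward direction, I would take an existing solution, read off $\lambda(t_0)$ from the formula above (automatically well-defined by invertibility of $E_3(t_1)$), and conclude from Proposition~\ref{prop:initial_conditions} that the pair is in $\im(N_\mu^DN_\mu)$. For the converse, assuming~\eqref{e:x0x1_condi}, I would set $\lambda(t_0):=E_3(t_1)^{-1}[x^1-E_4(t_1)x^0]$, invoke Proposition~\ref{prop:initial_conditions} to obtain a $\mathcal{C}^1$-solution of~\eqref{eq:the_optimality_DAE} with these initial data, and verify $x(t_1)=x^1$ by reading off the lower block of~\eqref{eq:drazindecomp} at $t=t_1$. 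I do not foresee a substantive obstacle: the argument is essentially linear bookkeeping built on Proposition~\ref{prop:initial_conditions}, and the only delicate point is to observe that the definition of $\lambda(t_0)$ is forced by $E_3(t_1)$ being invertible, so the admissibility condition~\eqref{e:x0x1_condi} is the only remaining constraint.
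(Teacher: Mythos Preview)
Your proposal is correct and follows essentially the same approach as the paper: invoke Proposition~\ref{prop:initial_conditions} to characterize solutions via initial data in $\im(N_\mu^DN_\mu)$, read off $x(t_1)=E_3(t_1)\lambda(t_0)+E_4(t_1)x^0$ from the lower block of~\eqref{eq:drazindecomp}, and use invertibility of $E_3(t_1)$ to solve uniquely for $\lambda(t_0)$. The paper's proof is just a terser version of your argument.
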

\begin{proof}
By Proposition \ref{prop:initial_conditions}, the system~\eqref{eq:the_optimality_DAE} has a solution $(x,\lambda,u)$ with $x(t_0) = x^0$ if, and only if, $\begin{bmatrix}\lambda^T(t_0) \\ x^T(t_0) \end{bmatrix} = N_\mu^DN_\mu\begin{bmatrix} v_\la\\ v_x\end{bmatrix}\in\im(N_\mu^D N_\mu)$ and \eqref{eq:unsere_lösungen} holds. If we evaluate the state of such a solution at time $t_1$, we obtain
\begin{align*}
x(t_1) = E_3(t_1)\lambda(t_0) + E_4(t_1)x^0,
\end{align*}
and hence $x(t_1) = x^1$ if, and only if, $\la(t_0) = E_3(t_1)^{-1}[x^1 - E_4(t_1)x^0]$.
\end{proof}

The optimal control that we have obtained in Proposition~\ref{prop:initial_conditions}, cf.\ \eqref{eq:unsere_lösungen}, is not a state feedback solution, as it depends additionally on the adjoint. However, Proposition~\ref{prop:existence} enables us to eliminate this dependence and to define the optimal control as a state feedback.

\begin{Cor}
    Let $(J,R,Q,B)\in\PH$ and $\Q\in\mathbb{K}^{m\times m}$ with $\Q^H = \Q\geq 0$. Let $\mu\in\mathbb{K}\setminus(\sigma((J-R)Q)\cup \sigma(-(J-RQ)^H))$ be such that $\mu\mathcal{E}-\mathcal{A}_\Q$ is invertible. Consider the decomposition \eqref{eq:drazindecomp}, assume that $E_3^\mu(t_1)$ is invertible and let the condition \eqref{e:x0x1_condi} hold. 
    
    Then, the unique optimal control of \eqref{eq:ultimate_port-Hamiltonian_OCP} is given by
    \begin{align*}
        u(t) = M_\mu N_\mu^D\exp{-[N_\mu^D(I-\mu N_\mu)](t-t_0)}
\begin{bmatrix}
E_3^\mu(t_1)^{-1}[x^1 - E_4^\mu(t_1)x^0]\\
x^0
\end{bmatrix}
\end{align*}
for all $t\in [t_0,t_1]$ with $N_\mu$ and $M_\mu$ as defined in \eqref{eq:Nmu} and \eqref{eq:Mmu}.
\end{Cor}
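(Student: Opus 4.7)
The plan is to combine the two preceding results \cref{prop:initial_conditions,prop:existence} and to invoke the characterization of uniqueness for regular optimality pencils (Campbell's Theorem 7, recorded in the Remark after \cref{prop:Campbell_for_us}). Concretely, I would first argue existence of an optimal trajectory, then derive its closed form, and finally conclude uniqueness.

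For existence and the explicit form, I would proceed as follows. By \cref{prop:existence}, condition \eqref{e:x0x1_condi} together with invertibility of $E_3^\mu(t_1)$ guarantees the existence of a classical $\mathcal{C}^1$-solution $(\lambda,x,u)$ of the optimality DAE~\eqref{eq:the_optimality_DAE} meeting the boundary conditions $x(t_0)=x^0$ and $x(t_1)=x^1$. Reading off the proof of \cref{prop:existence}, this solution necessarily satisfies $\lambda(t_0)=E_3^\mu(t_1)^{-1}[x^1-E_4^\mu(t_1)x^0]$. Since, by Campbell's theorem recalled after \eqref{eq:the_optimality_condition}, any such DAE-solution yields a solution $(x,u)$ of the optimal control problem~\eqref{eq:ultimate_port-Hamiltonian_OCP}, we obtain an optimal pair. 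Plugging the identified initial datum $(\lambda(t_0),x(t_0))$ into the solution formula \eqref{eq:unsere_lösungen} from \cref{prop:initial_conditions} yields
\begin{align*}
\begin{bmatrix}\lambda(t)\\ x(t)\end{bmatrix}
= \exp{-[N_\mu^D(I-\mu N_\mu)](t-t_0)}\begin{bmatrix}E_3^\mu(t_1)^{-1}[x^1-E_4^\mu(t_1)x^0]\\ x^0\end{bmatrix},
\end{align*}
and then the algebraic relation $u=M_\mu N_\mu^D \sbvec{\lambda}{x}$ from \eqref{eq:unsere_lösungen} delivers the claimed closed-form expression for $u(t)$.

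For uniqueness, I would appeal to the Remark following \cref{prop:Campbell_for_us}, which states that regularity of $s\mathcal{E}-\mathcal{A}_S$ is equivalent to uniqueness of optimal controls whenever they exist; since regularity is among the hypotheses and existence has just been established, uniqueness follows. Together, this identifies the formula above as the unique optimal control and, in particular, expresses it purely in terms of the boundary data $x^0$ and $x^1$—no adjoint is needed once the propagator $\exp{-[N_\mu^D(I-\mu N_\mu)](\cdot-t_0)}$ and its $E_3^\mu,E_4^\mu$-blocks are available.

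I do not expect a substantial obstacle: the argument is essentially a bookkeeping step that combines \cref{prop:initial_conditions,prop:existence} with the uniqueness remark. The one point that deserves a sentence of verification is consistency of the initial data, namely that the vector $\sbvec{E_3^\mu(t_1)^{-1}[x^1-E_4^\mu(t_1)x^0]}{x^0}$ lies in $\im(N_\mu^D N_\mu)$—but this is exactly the assumed hypothesis \eqref{e:x0x1_condi}, so the representation \eqref{eq:unsere_lösungen} is indeed applicable. A minor cosmetic point worth noting in the write-up is that independence of $\mu$ on the resulting trajectory (and hence on $u$) is guaranteed by the preceding corollary on $\mu$-independence of $\im(N_\mu^D N_\mu)$, so the displayed feedback formula is well defined.
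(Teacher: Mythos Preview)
Your proposal is correct and follows essentially the same approach as the paper: combine the solution formula \eqref{eq:unsere_lösungen} from \cref{prop:initial_conditions} with the identification $\lambda(t_0)=E_3^\mu(t_1)^{-1}[x^1-E_4^\mu(t_1)x^0]$ obtained in the proof of \cref{prop:existence}. The paper's own argument is actually terser than yours---it simply substitutes and does not spell out the optimality and uniqueness steps---so your additional references to Campbell's sufficiency theorem and the uniqueness remark only make the argument more complete, not different in spirit.
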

\begin{proof}
The formula \eqref{eq:unsere_lösungen} implies that
\begin{align*}
        u = M_\mu N_\mu^D\exp{-[N_\mu^D(I-\mu N_\mu)](\cdot-t_0)}
\begin{bmatrix}
\lambda(t_0)\\
x(t_0)
\end{bmatrix}.
\end{align*}
Inserting the formula $\lambda(t_0) = E_3^\mu(t_1)^{-1}[x^1 - E_4^\mu(t_1)x^0]$ derived in the proof of Proposition~\ref{prop:existence} yields the assertion.
\end{proof}

%%%%%%%%%%%%%%%%%%%%%%%%%%%%%%%%%%%%%%%%%%%%%%%%%%%%%%%%%%%%%%%%%%%%%%%%%%%%%%%%%%%%%%%%%%%%%%%%%%%%%%%%%%%%%
\section{Illustrating examples}\label{sec:examples}
%%%%%%%%%%%%%%%%%%%%%%%%%%%%%%%%%%%%%%%%%%%%%%%%%%%%%%%%%%%%%%%%%%%%%%%%%%%%%%%%%%%%%%%%%%%%%%%%%%%%%%%%%%%%%

In this section, we provide several examples to illustrate the theoretical results of the previous sections. 
%We will reconsider the second order mechanical system of %Example~\ref{ex:mechsys} and additionally provide two %academic examples.
As shown in Proposition~\ref{prop:initial_conditions}, more specifically in \eqref{eq:unsere_lösungen}, the flow of the optimality DAE \eqref{eq:the_optimality_DAE} can be calculated by means of the matrix function $H_\mu : \R\to\K^{2n\times 2n}$, defined by
$$
H_\mu(t) := \exp{-[N_\mu^D - \mu N_\mu^DN_\mu](t-t_0)}N_\mu^DN_\mu.
$$

\begin{Ex}\label{ex:echtes_beispiel}
First, we revisit Example~\ref{ex:mechsys} and set $\ell = 1$ and $M = K = 1$, i.e.\ we consider the matrices
\begin{align*}
J := \begin{bmatrix}
0 & -1\\
1 & 0
\end{bmatrix},\quad R := \begin{bmatrix}
d & 0\\
0 & 0
\end{bmatrix},\quad Q := I_2,\quad B:= \begin{bmatrix}
1\\0
\end{bmatrix}
\end{align*}
for some damping coefficient $d>0$ and the corresponding minimal energy optimal control problem, cf.\ \eqref{eq:ultimate_port-Hamiltonian_OCP} with $S=0$:
\begin{align*}
    \min\ & d\int_{t_0}^{t_1} x_1^2\,\mathrm{d}t \qquad\mathrm{s.t.}\qquad \tfrac{\mathrm{d}}{\mathrm{d}t}{x} = \begin{bmatrix}
-d & -1\\
1 & 0
\end{bmatrix}x + \begin{bmatrix}
1\\0
\end{bmatrix}u,\quad x(t_0) = x^0,\ x(t_1) = x^1.
\end{align*}
The associated optimality pencil is then given by
\begin{align*}
s\mathcal{E}-\mathcal{A}_0 = \begin{bmatrix}
s-d & 1 & d & 0 & 0\\
-1 & s & 0 & 0 & 0\\
0 & 0 & s+d & 1 & -1\\
0 & 0 & -1 & s & 0\\
1 & 0 & 0 & 0 & 0
\end{bmatrix}.
\end{align*}
As already noted in Example~\ref{ex:mechsys}, the pencil is regular since the condition $\im B\cap\ker RQ = \{0\}$ is satisfied, cf.\ Proposition~\ref{prop:Schaller_Philipp_für_uns}. 
The flow of the associated DAE can be calculated along the lines of Proposition~\ref{prop:initial_conditions}, 
\begin{align*}
H_d:\mathbb{R}\to\mathbb{R}^{4\times 4},\quad t\mapsto \exp{-[N_d^D-dN_d^DN_d](t-t_0)}N_d^DN_d = \frac{1}{d}\begin{bmatrix}
0 & 0 & 0 & 0\\
0 & d & 0 & 0\\
0 & -1 & 0 & 0\\
1 & -t & 0 & d
\end{bmatrix},
\end{align*}
cf.\ Appendix~\ref{sec:example_1} for details.
This flow maps an initial state and adjoint state
\begin{align*}
\begin{bmatrix}(\lambda^0)^\top& (x^0)^\top\end{bmatrix}^\top\in\im(N_d^DN_d) = \set{\begin{bmatrix}0 & -dz & z & \zeta\end{bmatrix}^\top\,\big\vert\,z,\zeta\in\mathbb{R}}
\end{align*}
to the trajectories of the adjoint $\lambda$ and the state $x$. Especially, we conclude that each initial value~$x^0$ admits a unique $\lambda^0\in\mathbb{R}^2$ so that the optimality system corresponding to $s\mathcal{E}-\mathcal{A}_0$
%\begin{align*}
%\big(\tfrac{\mathrm{d}}{\mathrm{d}t}\mathcal{E}-\mathcal{A}_0\big)\begin{bmatrix}
%\lambda\\x\\u
%\end{bmatrix}
%\end{align*}
possesses a solution with initial conditions $\lambda(t_0) = \lambda^0$ and $x(t_0) = x^0$. In particular, an initial condition $\begin{bmatrix}
    0 & -dx^0_1 & x^0_1 & x^0_2
\end{bmatrix}$ generates the state trajectory
\begin{align}\label{eq:formulastate}
x(t) & = \begin{bmatrix}
x_1^0\\
t x_1^0 + x_2^0
\end{bmatrix}.
\end{align}
Therefore, the optimality system yields an optimal solution if, and only if, $x^1_1 = x_1^0$ and $x_2^1 = t_1 x_1^0 + x_2^0$.
%\begin{align}\label{eq:odd_condition}
%x^1 = \begin{bmatrix}
%x_1^0\\
%t_1 x_1^0 + x_2^0
%\end{bmatrix},
%\end{align}
%namely the optimal state trajectory
Directly applying Proposition~\ref{prop:existence}, %instead of the above calculations, 
we get from~\eqref{e:x0x1_condi} the necessary and sufficient condition
\begin{align*}
    \begin{bmatrix}
     \frac{1}{d}(x_2^1-x_2^0-t_1x^1_1)\\
     -\frac{1}{d}x_1^1\\
     x^0_1\\
     x^0_2
    \end{bmatrix} = \begin{bmatrix}
        \frac{1}{d}\begin{bmatrix}
            -t_1 & 1\\-1 & 0
        \end{bmatrix}\left(x^1-\begin{bmatrix}
            0 & 0\\0 & 1
        \end{bmatrix}x^0\right)\\
        x^0
    \end{bmatrix}\in\im N_d^DN_d = \set{\begin{bmatrix}0 & -dz
    &z & \zeta \end{bmatrix}^T\,\big\vert\, z,\zeta\in\mathbb{R}}
\end{align*}
for the existence of an optimal trajectory that transfers the initial value $x(t_0) = x^0$ to the terminal value $x(t_1) = x^1$. 
%This condition for the terminal condition is equivalent to the aforementioned.} 
To calculate the associated optimal control, we refer to Proposition~\ref{prop:initial_conditions}. Recall that
\begin{align*}
M_d = -\Q_d^{-1}\begin{bmatrix}
    B^H &0
\end{bmatrix}E_d = \frac{1}{d^3}\begin{bmatrix}
2d^3+d & -2d^2-1 & -d^3 & d^2\end{bmatrix}.
\end{align*}
Therefore, we get the associated control
\begin{align}\label{eq:formulacontrol}
u(t) & = M_d N_d^D\begin{bmatrix}\lambda(t)\\ x(t)\end{bmatrix} = \frac{1}{d^4}\begin{bmatrix} d^3 &-d^4  &0 &d^4\end{bmatrix}\begin{bmatrix}\lambda(t)\\ x(t)\end{bmatrix} =(t+d) x_1^0+ x_2^0.
\end{align}
The formulas for the optimal state \eqref{eq:formulastate} and control \eqref{eq:formulacontrol}, and the consistent boundary conditions %~\eqref{eq:odd_condition} 
coincide with the findings of Example~\ref{ex:mechsys} for this particular scalar case. We stress, however, that in Example~\ref{ex:mechsys} we heavily exploited the structure of the second-order mechanical system, whereas the approach of Proposition~\ref{prop:initial_conditions} employed here only hinges on the port-Hamiltonian structure of the OCP.
\end{Ex}
%{\color{red} VM: We can leave out the mech example}
Next we draw upon two academic examples: The first one illustrates that we can choose $\Q = 0$, while the second one shows how to compute the minimal pertubation. 

\begin{Ex}\label{eq:zweites_Beispiel}
Consider the matrices
\begin{align*}
J:=\begin{bmatrix}
i & 0\\
0 & 0
\end{bmatrix},\quad R := \begin{bmatrix}
0 & 0\\
0 & 1
\end{bmatrix},\quad Q := \begin{bmatrix}
1 & i\\
-i & 1
\end{bmatrix},\quad B := \begin{bmatrix}
1\\0
\end{bmatrix}.
\end{align*}
It is clear that $J$ is skew-Hermitian, that $R$ and $Q$ are Hermitian and positive semidefinite.
%, see %Appendix~\ref{sec:example_2}.
Using Proposition~\ref{prop:final_equivalence}, we verify that $s\mathcal{E}-\mathcal{A}_0$ is regular. Let $\omega\in\mathbb{R}\setminus\set{0,1}$. Then we have
\begin{align*}
(JQ-i\omega I)^{-1}\ker RQ & = \begin{bmatrix}
i(1-\omega) & -1\\
0 & -i\omega
\end{bmatrix}^{-1}\set{\begin{bmatrix}z\\ iz\end{bmatrix}\,\big\vert\,z\in\mathbb{C}} = \set{\begin{bmatrix}-iz \\ -\frac{z}{\omega}\end{bmatrix}\,\vert\,z\in\mathbb{C}}, 
\end{align*}
see Appendix~\ref{sec:example_2} for the calculation of $\ker RQ$.
Since $B$ has full column rank and $\im B = \mathbb{C}\times\set{0}$, we have $\im B\cap (JQ-i\omega I)^{-1}\ker RQ = \set{0}$ and hence Proposition~\ref{prop:final_equivalence} yields that $s\mathcal{E}-\mathcal{A}_\Q$ is regular. 
In view of Proposition~\ref{prop:initial_conditions}, for given $t_0\in\mathbb{R}$ the solutions $(x,\lambda,u)$ have the form
\begin{equation}\label{eq:equation_3}
\begin{aligned}
\begin{bmatrix}
\lambda(t)\\x(t)
\end{bmatrix} & = T\begin{bmatrix}
\exp{\frac{1}{2}(i-\sqrt{3+8i})(t-t_0)} & 0 & 0 & 0\\
0 & \exp{\frac{1}{2}(i+\sqrt{3+8i})(t-t_0)} & 0 & 0\\
0 & 0 & \exp{(1-i)(t-t_0)} & 0\\
0 & 0 & 0 & 1
\end{bmatrix}T^{-1}\begin{bmatrix}
\lambda(t_0)\\
x(t_0)
\end{bmatrix},\\
u(t) & = -\frac{5}{1-2i} \big(x_1(t)+(2-i)x_2(t)-i\lambda_2(t)\big)
\end{aligned}
\end{equation}
for all $t\in\mathbb{R}$ with $T$ given in Appendix~\ref{sec:example_2}. Given $x^0,x^1\in\mathbb{C}^2$ and $t_0<t_1\in\mathbb{R}$, we conclude that~\eqref{eq:equation_3} has a solution $(x,\lambda,u)$ with $x(t_0) = x^0$ and $x(t_1) = x^1$ if, and only if,
\begin{align}\label{eq:equation_4}
x^1 = \frac{1}{(28i-12)}\begin{bmatrix}
(8+10i)\exp{(1-i)(t_1-t_0)}-20+8i & (8+10i)\exp{(1-i)(t_1-t_0)}+20-8i\\
(8+10i)\exp{(1-i)(t_1-t_0)}-8-20i & (8+10i)\exp{(1-i)(t_1-t_0)}+8+20i
\end{bmatrix}x^0.
\end{align}
Therefore, we can use the presented method to find an optimal solution of the associated optimal control problem~\eqref{eq:ultimate_port-Hamiltonian_OCP}
%\begin{align*}
%\min\quad & \int_{t_0}^{t_1}x^H\begin{bmatrix}
%1 & i\\-i & 1
%\end{bmatrix} x\,\mathrm{d}t\\
%\mathrm{s.t.}\quad & \tfrac{\mathrm{d}}{\mathrm{d}t}x = \begin{bmatrix}
%i & -1\\i & -1
%\end{bmatrix} x+\begin{bmatrix}
%1\\0
%\end{bmatrix} u,\qquad x(t_0) = x^0,\ x(t_1) = x^1
%\end{align*}
if, and only if, the boundary conditions $x^0$ and $x^1$ fulfill~\eqref{eq:equation_4}. To obtain the same result from~\eqref{e:x0x1_condi} in Proposition~\ref{prop:existence}, we have
\begin{align*}
 H_1(t_1) = \exp{A t_1} = T\exp{T^{-1}AT}T^{-1} = T\begin{bmatrix}
\exp{\frac{1}{2}(i-\sqrt{3+8i})(t_1-t_0)} & 0 & 0 & 0\\
0 & \exp{\frac{1}{2}(i+\sqrt{3+8i})(t_1-t_0)} & 0 & 0\\
0 & 0 & \exp{(1-i)(t_1-t_0)} & 0\\
0 & 0 & 0 & 1
\end{bmatrix}T^{-1},
\end{align*}
where we omit the details of the calculation.

\end{Ex}

Next, we give an example in which the pencil $s\mathcal{E}-\mathcal{A}_0$ is singular, and use Lemma~\ref{lem:rank_minimal_realization} to find a rank-minimal perturbation $\Q$ so that the perturbed pencil $s\mathcal{E}-\mathcal{A}_\Q$ is regular.

\begin{Ex}\label{ex:drittes_beispiel}
%We give a one-dimensional example. 
Let $J = i$, $R = 0$, $Q = 1$ and $B = \begin{bmatrix} 1&i\end{bmatrix}\in\mathbb{C}^{1\times 2}$. Then $(J,R,Q,B)\in\PH$. In this case %$s\mathcal{E}-\mathcal{A}_0$ is %the pencil
\begin{align*}
s\mathcal{E}-\mathcal{A}_0 = \begin{bmatrix}
s-i & 0 & 0 & 0\\
0 & s-i & -1 & -i\\
1 & 0 & 0 & 0\\
i & 0 & 0 & 0
\end{bmatrix}.
\end{align*}
is evidently singular. Since $\ker RQ = \mathbb{C}$, we have $B^{-1}(JQ-i\omega I)\ker RQ = \mathbb{C}^2$ and by Lemma~\ref{lem:rank_minimal_realization} the pencil $\mathcal{E}-\mathcal{A}_\Q$ with $\Q\in\mathbb{K}^{m\times m}$ symmetric and positive semidefinite is regular if, and only if, $\Q$ is positive definite. 
%This example is, of course, degenerate. 
Recall that the DAE associated with the pencil $s\mathcal{E}-\mathcal{A}_0$ aims to determine the optimal solutions of the optimal control problem
% \begin{equation*}
% \begin{aligned}
% \min\quad & \int_{t_0}^{t_1} x^HQRQx\,\mathrm{d}t\\
% \mathrm{s.t.}\quad & \tfrac{\mathrm{d}}{\mathrm{d}t}x = (J-R)Qx+Bu,\qquad x(t_0) = x^0,\ x(t_1) = x^1.
% \end{aligned}
% \end{equation*}
%When we plug in our data of $(J,R,Q,B)$, then we get
\begin{equation*}
\begin{aligned}
\min\quad & 0\\
\mathrm{s.t.}\quad & \tfrac{\mathrm{d}}{\mathrm{d}t}x = ix+u_1+iu_2,\qquad x(t_0) = x^0,\ x(t_1) = x^1
\end{aligned}
\end{equation*}
and we see that every solution of the ODE which fulfills the boundary conditions is optimal. If $t_1-t_0$ is no multiple of $2\pi$, then the function
\begin{align*}
x:\mathbb{R}\to\mathbb{C},\quad t\mapsto \exp{i(t-t_0)}x^0 + \frac{\exp{i(t_1-t_0)}x^0-x^1}{i\exp{i(t-t_0)}-i}
\end{align*}
is an optimal solution with constant control. However, our perturbed problem associated with the regular pencil $s\mathcal{E}-\mathcal{A}_\Q$ is the optimal control problem
\begin{align*}
\min\quad & \int_{t_0}^{t_1} u^H\Q u\,\mathrm{d}t\\
\mathrm{s.t.}\quad & \tfrac{\mathrm{d}}{\mathrm{d}t}x = (J-R)Qx+Bu,\qquad x(t_0) = x^0,\ x(t_1) = x^1.
\end{align*}
Consider e.g. $\Q = I_2$. Then for all $\mu\in\mathbb{C}$ we have that
$\mu\mathcal{E}-\mathcal{A}_\Q $
is invertible if, and only if, $\mu\neq i$.
Taking e.g.  $\mu = 2i$, we get
\begin{align*}
(2i\mathcal{E}-\mathcal{A}_I)^{-1} = \begin{bmatrix}
i & 0 & 0 & 0\\
0 & i & -1 & -i\\
1 & 0 & -1 & -1\\
i & 0 & 0 & -1
\end{bmatrix}^{-1} = \begin{bmatrix}
-i & 0 & 0 & 0\\
0 & -i & i & -1\\
-i & 0 & -1 & 0\\
1 & 0 & 0 & -1
\end{bmatrix},
\end{align*}
and hence,
\begin{align*}
N_{2i} = \begin{bmatrix}
-i & 0\\
0 & -i
\end{bmatrix},\quad N_{2i}^D = N_{2i}^{-1} = \begin{bmatrix}
i & 0\\
0 & i
\end{bmatrix},\quad\text{and}\quad M_{2i} = \begin{bmatrix}
-i & 0\\
1 & 0
\end{bmatrix}.
\end{align*}
Then $N_{2i}^D(I-2iN_{2i}) = N_{2i}^D-2i$ and we can calculate, for all $t\in\mathbb{R}$,
\begin{align*}
H_{2i}(t) = \exp{-[N_{2i}^D(I-2iN_{2i})](t-t_0)} = \begin{bmatrix}
\exp{i(t-t_0)} & 0\\
0 & \exp{i(t-t_0)}
\end{bmatrix}.
\end{align*}
By Proposition~\ref{prop:initial_conditions}, all solutions of the DAE $(\tfrac{\mathrm{d}}{\mathrm{d}t}\mathcal{E}-\mathcal{A}_I)(\lambda,x,u)$ with the initial condition $x(0) = x^0$ are given as
\begin{align*}
\lambda(\cdot) & = \exp{i(\cdot-t_0)}\lambda(0),\\
x(\cdot) & = \exp{i(\cdot-t_0)}x^0,\\
u(\cdot) & = \lambda(\cdot)\begin{bmatrix}
1\\i
\end{bmatrix}.
\end{align*}
In particular, we have a solution of the optimality DAE that fulfills the boundary values for $x$ if, and only if, $x^1 = \exp{i(t_1-t_0)}x^0$. Note, that Proposition~\ref{prop:existence} is not applicable as $E_3(t)=0$ is not invertible in this case.
\end{Ex}
As last example, we  consider %a discretization of 
a boundary controlled heat equation.
\begin{Ex}\label{ex:heat}
We consider the temperature distribution in a domain $\Omega\subset \mathbb{R}^d$, $d\in \{1,2,3\}$ with a Dirichlet boundary control on a part of the boundary $\Gamma_\mathrm{c}\subset \partial \Omega$. The model is given by
\begin{align*}
   && \dot \theta(t,\omega)&= \kappa \Delta \theta(t,\omega),\ &&\omega\in \Omega, \ t\in [0,T],\\
   && \theta(t,\gamma) &= u(t,\gamma),\ &&\gamma \in \Gamma_\mathrm{c},\ t\in [0,T],\\ 
   && \theta(s,\gamma) &= 0,\ &&\gamma \in \Gamma\setminus \Gamma_\mathrm{c}, t\in [0,T],
\end{align*}
where $\theta (t,\omega)$ is the temperature at time $t\in [0,T]$ at the space point $\omega\in \Omega$ and $\kappa > 0$ is a heat conductivity coefficient.
Discretizing in space by central finite differences, see e.g.
\cite{Str04}, with a regular mesh of mesh size $h> 0$ and $n\in \mathbb{N}$ grid points $\{\omega_0,\ldots,\omega_{n-1}\}$, leads to a control problem
\begin{equation*}
    \dot z_h(t)= -R_h z_h(t) + B_h u(t),\  t\in [0,T],
\end{equation*}
where $z_h(t)\in \R^{n}$ consists of the nodal values of the temperature profile, i.e. $(z_h(t))_j = \theta(t,\omega_j)$ for $j\in \{0,\ldots,n-1\}$.  The matrix $R_h\in \R^{n \times n}$ is the finite difference
matrix associated with the negative Laplace operator at all mesh points. The input matrix $B_h \in \R^{n\times m}$, describes the matrix with associated with the $m\in \mathbb{N}$ grid points on the control boundary $\Gamma_\mathrm{c}$. Adding an output equation $y=B_h^T z_h$ yields a port-Hamiltonian system of the form \eqref{eq:pH_ISO-system} with $R=R_h$, $J=0$, $Q=I$ and $B=B_h$,
%
% \begin{equation}\label{heatcontrol}
%     \dot x(t)= -R x(t) + B u(t),\  t\in [0,T],
% \end{equation}
%
where $R = R_h$ %$R = A_h$
is symmetric and positive semidefinite with a kernel of the dimension of the number of grid points on the boundary. However, as the values on the boundary points are fixed, the corresponding rows of $B_h$ are zero, such that $\im B_h\cap \ker R_h = \{0\}$.

We briefly illustrate this for the one-dimensional case with $\Omega = [0,1]$, $\Gamma_\mathrm{c} = \{0\}$ and $\Gamma\setminus \Gamma_\mathrm{c} = \{1\}$  with equidistant grid $0 = \omega_0<\ldots<\omega_{n-1} = 1$.
%, $N\in \mathbb{N}^*$. 
For $t \in [0,T]$ we obtain the finite-difference approximation at the interior nodes
\begin{align*}
    \Delta \theta(t,\omega_j) \approx \frac{\theta(t,\omega_{j+1}) - 2\theta(t,\omega_{j}) + \theta(t,\omega_{j-1})}{h^2}, \qquad  j = 1,\ldots,n-2.
\end{align*}
Moreover, incorporating the boundary conditions $\theta(t,\omega_0) = u(t)$ and $\theta(t,\omega_{n-1}) = 0$, we get
\begin{align*}
        \Delta \theta(t,\omega_1) \approx \frac{\theta(t,\omega_{2}) - 2\theta(t,\omega_{1}) + u(t)}{h^2} \quad 
        \mathrm{and} \quad \Delta \theta(t,\omega_{n-2}) \approx - \frac{2\theta(t,\omega_{n-2}) - \theta(t,\omega_{n-3})}{h^2} \\
\end{align*}
and thus
% \begin{align*}%     A_h = \begin{bmatrix}
%         0 & 0 & 0 & 0 & 0 & \cdots& 0 \\
%         0 & 2 & -1 & 0 & 0 & 0 & 0 \\
%         0& -1& 2 & -1& 0 & 0 & 0\\
%         0& \ddots & \ddots & \ddots& \ddots & \ddots &0\\
%         0& 0& 0 & -1& 2 & -1 & 0\\
%         0& 0& 0& 0 & -1& 2 & 0\\
%         0& 0& 0& 0 & 0& 0 & 0
%     \end{bmatrix}, \qquad B_h = \begin{bmatrix}
%         0 & 0 \\
%         1 & 0 \\
%         0 & 0 \\
%         \vdots & \vdots\\
%         0 & 0 \\
%         0 & 1 \\
%         0 & 0
%     \end{bmatrix}.
% \end{align*}
\begin{align*}
R_h =
\begin{bmatrix}
0 & 0 & 0 & 0 & \cdots & 0\\
0 & 2 & -1 & \ddots & \ddots & \vdots\\
0 & -1 & \ddots & \ddots & \ddots & 0\\
0 & \ddots & \ddots & \ddots & -1 & 0\\
\vdots & \ddots & \ddots & -1 & 2 & 0\\
0 & \cdots & 0 & \phantom{\ddots}\hspace*{-.4cm}0 & 0 & 0
\end{bmatrix}\in \R^{n\times n}, \qquad 
% \qquad B_h = \begin{bmatrix}
% 0 & 0 \\
% 1 & \phantom{\ddots}\hspace*{-.4cm}0\\
% 0 & \vdots\\
% \vdots & \phantom{\ddots}\hspace*{-.4cm}0 \\
% 0 & \phantom{\ddots}\hspace*{-.4cm}1 \\
% 0 & \phantom{\ddots}\hspace*{-.4cm}0
% \end{bmatrix}.
B_h = \begin{bmatrix}
    0\\1\\0\\\vdots\\0
\end{bmatrix}\in \R^{n\times 1}.
\end{align*}
Here, clearly $\ker R_h = \operatorname{span}\{e_1,e_{n}\}$ and $\im B_h = \operatorname{span}\{e_2\}$, where $e_j\in \R^{n}$, $j\in \{1,\ldots,n\}$ % Hier stand N+1
is the canonical unit vector. Thus, $\im B_h \cap \ker R_h = 0$. Considering the corresponding optimal control problem with minimal energy supply, applying Proposition~\ref{prop:Schaller_Philipp_für_uns} yields regularity of the corresponding optimality system and Proposition~\ref{prop:indexthree} implies that the associated DAE has index three. In particular this shows that a regularization term in the cost penalizing the control is not necessary.
\end{Ex}

%%%%%%%%%%%%%%%%%%%%%%%%%%%%%%%%%%%%%%%%%%%%%%%%%%%%%%%%%%%%%%%%%%%%%%%%%%%%%%%%%%%%%%%%%%%%%%%%%%%%%%%%%%%%%
\section{Conclusions and outlook}\label{sec:conclusions}
%%%%%%%%%%%%%%%%%%%%%%%%%%%%%%%%%%%%%%%%%%%%%%%%%%%%%%%%%%%%%%%%%%%%%%%%%%%%%%%%%%%%%%%%%%%%%%%%%%%%%%%%%%%%%

The optimality system associated with the problem of steering a port-Hamiltonian system from an initial state to a terminal state on a given time interval with minimal energy supply has been studied. Regularity of this differential-algebraic equation has been characterized exploiting the port-Hamiltonian structure. %methods developed in~\cite{Camp76}. Exploiting the port-Hamiltonian structure, this characterization has been simplified. 
Minimal rank quadratic perturbations of the cost functional have been characterized so that the perturbed problem produces a regular optimality system. Assuming regularity, the solution formula for regular linear time-invariant differential-algebraic equations leads to a characterization of optimal solutions. Moreover, we have derived conditions which allow to characterize the optimal control as a state feedback. Last, the results are illustrated by means of examples, including second-order mechanical systems and a discretized heat equation.

%In future works, numerical implementations of the feedback-law~\eqref{eq:Campy} and performance studies in particular in view of applications are desired. 
Future works consider extensions to control of nonlinear or infinite-dimensional port-Hamiltonian systems, such as beam equations, building upon \cite{lhmnc} are subject to future research. In the nonlinear case, first steps were conducted by \cite{Kars23,Masc22}. In the infinite-dimensional case, we expect that the structure of the descriptor matrix in the optimality DAE can help to generalize the results. 

\appendix

\section{Detailed Calculations}
%In this appendix we provide details for Examples~\ref{ex:echtes_beispiel} and~\ref{eq:zweites_Beispiel}.

\subsection{Calculations for Example~\ref{ex:echtes_beispiel}}\label{sec:example_1}

We calculate $\Q_\mu$ at $\mu = d$. In view of~\cite[Formula (35)]{Camp76}, we have
\begin{align*}
S_d & = -B^\top(d-J-R)^{-1}R(d-J+R)^{-1}B\\
& = -[1,0]\begin{bmatrix}
0 & 1\\-1 & d
\end{bmatrix}^{-1}\begin{bmatrix}
d & 0\\
0 & 0
\end{bmatrix}\begin{bmatrix}
2d & 1\\-1 & d
\end{bmatrix}^{-1}\begin{bmatrix}
1\\0
\end{bmatrix}  = -\frac{d^3}{2d^2+1},
\end{align*}
and after some tedious, but straightforward calculations
%Further, $E_d$ presents itself as
\begin{align*}
E_d = \frac{1}{2d^2+1}\begin{bmatrix}
2d^3+d & -2d^2-1 & -d^3 & d^2\\
2d^2+1 & 0 & -d^2 & d\\
0 & 0 & d & -1\\
0 & 0 & 1 & 2d
\end{bmatrix}, \qquad N_d = \frac{1}{d^3}\begin{bmatrix}
0 & 0 & 0 & 0\\
0 & d^2 & 0 & 0\\
d^2 & -d & 0 & 0\\
d & -1 & 0 & d^2
\end{bmatrix}
\end{align*}
%\end{small}
using the definition $N_d = 
E_d+E_d\begin{bmatrix}B\\0\end{bmatrix}S_d^{-1}\begin{bmatrix} B^H & 0\end{bmatrix}E_d$.
To calculate the Drazin inverse, we transform~$N_d$ in Jordan form
\begin{align*}
    S^{-1}N_dS = \begin{bmatrix}
        0 & 1 & 0 & 0\\
        0 & 0 & 0 & 0\\
        0 & 0 & \frac{1}{d} & 1\\
        0 & 0 & 0 & \frac{1}{d}
\end{bmatrix} \qquad\text{with}\qquad 
S := \begin{bmatrix}
0 & d & 0 & 0\\
0 & 0 & 0 & -d^3\\
1 & 0 & 0 & d^2\\
0 & -1 & 1 & 0
\end{bmatrix}.
\end{align*}
Hence, its Drazin inverse can be calculated as
\begin{align*}
N_d^D & = S\begin{bmatrix}
0 & 0 & 0 & 0\\
0 & 0 & 0 & 0\\
0 & 0 & d & -d^2\\
0 & 0 & 0 & d
\end{bmatrix}S^{-1} = \frac{1}{d}\begin{bmatrix}
0 & 0 & 0 & 0\\
0 & d^2 & 0 & 0\\
0 & -d & 0 & 0\\
d & 1 & 0 & d^2
\end{bmatrix}.
\end{align*}
Thus, we get
\begin{align*}
N_d^D-dN_d^DN_d = \frac{1}{d}\begin{bmatrix}
0 & 0 & 0 & 0\\
0 & 0 & 0 & 0\\
0 & 0 & 0 & 0\\
0 & 1 & 0 & 0
\end{bmatrix}.
\end{align*}

\subsection{Calculations for Example~\ref{eq:zweites_Beispiel}}\label{sec:example_2}

To verify that $Q = \begin{bmatrix}1 & i\\-i & 1\end{bmatrix}$ is positive semidefinite, we calculate the characteristic polynomial as $\chi_Q(s) = \det(s-Q) = (s-1)^2-1 = s(s-2)$ and hence $\sigma(Q) = \set{0,2}$ so that $Q$ is indeed positive semidefinite. The kernel of $RQ$ can be easily calculated directly from
\begin{align*}
    \ker RQ = \ker\begin{bmatrix}
        0 & 0\\
        -i & 1
    \end{bmatrix} = \set{(z,iz)\,\big\vert\, z\in\mathbb{C}}.
\end{align*}
To solve~\eqref{eq:the_optimality_DAE}, we calculate $\Q_\mu$ for $\mu = 1$. By~\cite[Formula~(35)]{Camp76}, we have
\begin{align*}
\Q_1 & = -B^H(I-Q(J+R))^{-1}Q(I-(J-R)Q)^{-1}B = \frac{1-2i}{5}\neq 0.
\end{align*}
Further, we have
\begin{align*}
E_1 = \begin{bmatrix}
0 & -1 & \frac{1-2i}{5} & \frac{2+i}{5}\\
i & 1+i & -\frac{2+i}{5} & \frac{1-2i}{5}\\
0 & 0 & \frac{4+2i}{5} & -\frac{2+i}{5}\\
0 & 0 & -\frac{1-2i}{5} & \frac{3-i}{5}
\end{bmatrix} \qquad\text{and}\qquad
N_1^D = N_1^{-1} = \begin{bmatrix}
2+i & -i & 1 & i\\
-1 & 0 & -i & 1\\
0 & 0 & 1-i & 1\\
0 & 0 & -i & 2
\end{bmatrix}.
\end{align*}
Hence, we have $A := N_1^D(I_4- N_1) = N_1^D-I_4$. 
%We can calculate 
The characteristic polynomial of $A$ is given by
%using Laplace's expansion theorem as
\begin{align*}
\chi_A(s) = s(s+i-1)\left(s-\frac{1}{2}(i+\sqrt{3+8i})\right)\left(s-\frac{1}{2}(i-\sqrt{3+8i})\right).
\end{align*}
Thus, $A$ is diagonalizable and a direct calculation verifies that the transformation matrix
\begin{align*}
T & :=\frac{1}{\sqrt{3+8i}(28i-12)}\begin{bmatrix}
2 & 2 & 2 & 0\\
1-i(2+\sqrt{3+8i}) & 1-i(2-\sqrt{3+8i}) & -3-3i & 0\\
0 & 0 & -2-5i & 1\\
0 & 0 & -2-5i & i
\end{bmatrix}
\end{align*}
diagonalizes the matrix $A$ as $T^{-1}AT = \mathrm{diag}(\frac{1}{2}(i-\sqrt{3+8i}), \frac{1}{2}(i+\sqrt{3+8i}), 1-i, 0)$.

%To calculate $\ker RQ$ used for the verification of regularity of $s\mathcal{E}-\mathcal{A}_\Q$, we use
%For the calculation of $\lambda$, we have
%\begin{align*}
%(J-R)Q = \begin{bmatrix}
%i & -1\\
%i & -1
%\end{bmatrix},\quad Q(J+R) = \begin{bmatrix}
%i & i\\
%1 & 1
%\end{bmatrix}
%\end{align*}
%with spectra $\sigma((J-R)Q) = \set{0,-1+i}$ and $\sigma(Q(J+R)) = \set{0,1+i}$. Further, we can calculate
%\begin{align*}
%QRQ = \begin{bmatrix}
%1 & i\\
%-i & 1
%\end{bmatrix}\begin{bmatrix}
%0 & 0\\
%-i & 1
%\end{bmatrix} = \begin{bmatrix}
%1 & i\\
%-i & 1
%\end{bmatrix} = Q.
%\end{align*}

\section*{Acknowledgement}
We are indebted to our colleagues Achim Ilchmann and Timo Reis for several constructive discussions.

\section*{Funding}
JK thanks Technische Universität Ilmenau and Freistaat Thüringen for their financial support as part of the Thüringer Graduiertenförderung. VM was supported by German Research Foundation through Priority Program SPP 1984 \textit{Hybrid and multimodal energy systems} within the project \textit{Distributed Dynamic Security Control}. FP was funded by the Carl Zeiss Foundation within the project \textit{DeepTurb--Deep Learning in and from Turbulence} and by the free state of Thuringia within the project \textit{THInKI--Th\"uringer Hochschulinitiative für KI im Studium}. KW gratefully acknowledges funding by the German Research Foundation (DFG; grant no.\ 507037103).

\bibliographystyle{siamplain}
\bibliography{Arxiv}
\end{document}